\definecolor{awesome}{rgb}{1.0, 0.13, 0.32}
\definecolor{electricviolet}{rgb}{0.56, 0.0, 1.0}
\definecolor{electricyellow}{rgb}{1.0, 1.0, 0.0}
\definecolor{electricgreen}{rgb}{0.0, 1.0, 0.0}
\definecolor{coral}{rgb}{1.0, 0.5, 0.31}
\newcolumntype{P}[1]{>{\centering\arraybackslash}p{#1}}
\newcolumntype{R}[1]{>{\raggedleft\arraybackslash}p{#1}}
\DeclareMathOperator*{\argmin}{arg\,min}
\DeclareMathOperator*{\Id}{Id}
\DeclareMathOperator{\E}{\mathbb{E}}
\DeclareMathOperator{\Tr}{\mathrm{Tr}}
\DeclareMathOperator{\Prob}{\mathbb{P}}
\DeclareMathOperator{\one}{\mathds{1}}
\DeclarePairedDelimiter{\ceil}{\lceil}{\rceil}
\DeclarePairedDelimiter{\floor}{\lfloor}{\rfloor}
\newcommand{\F}{\mathrm{F}}
\newcommand{\ope}{\mathrm{op}}
\newcommand{\llangle}{\langle \!\langle}
\newcommand{\rrangle}{\rangle \!\rangle}
\newcommand{\wip}[2]{\llangle #1, #2 \rrangle_{\mathcal{W}_2}}
\newcommand{\ipm}[3]{\mathrm{IPM}(#1, #2; #3)}
\newcommand{\lip}[2]{\| #1 \|_{\mathrm{Lip}(#2)}}
\newcommand{\sw}[2]{\ensuremath{(#1, #2)}\textup{-sub-Weibull}}
\newtheorem{theorem}{Theorem}
\newtheorem{proposition}[theorem]{Proposition}
\newtheorem{lemma}[theorem]{Lemma}
\newtheorem{definition}[theorem]{Definition}
\newcounter{assumptioncounter}
\newenvironment{assumption}[1][]{%
  \refstepcounter{assumptioncounter}%
  \par\medskip\noindent\textbf{Assumption~\theassumptioncounter #1 \hspace{3pt}} \itshape
}{\par\medskip}
\newenvironment{proofsketch}[1][]{
    \par\medskip\noindent\textbf{Proof Sketch of Theorem #1\hspace{3pt}}\rmfamily
}
{\par\medskip}
\begin{document}
\title{Multiple-output composite quantile regression through an optimal transport lens} 

\author{Xuzhi Yang and Tengyao Wang\\
Department of Statistics, London School of Economics}
\date{February 14, 2024}

\maketitle

\begin{abstract}%
Composite quantile regression has been used to obtain robust estimators of regression coefficients in linear models with good statistical efficiency. By revealing an intrinsic link between the composite quantile regression loss function and the Wasserstein distance from the residuals to the set of quantiles, we establish a generalization of the composite quantile regression to the multiple-output settings. Theoretical convergence rates of the proposed estimator are derived both under the setting where the additive error possesses only a finite $\ell$-th moment (for $\ell > 2$) and where it exhibits a sub-Weibull tail. In doing so, we develop novel techniques for analyzing the M-estimation problem that involves Wasserstein-distance in the loss. Numerical studies confirm the practical effectiveness of our proposed procedure.
\end{abstract}

\noindent%
{\it Keywords:} quantile regression, optimal transport, multivariate quantiles, robust estimation 

\begin{refsection}

\section{Introduction}\label{sec: intro}
The area of robust statistics has seen a revival of interest in recent years, 
both in Statistics and Computer Science. This is partly due to the fact that the massive surge in data volumes brings about a significant demand for efficient and precise analysis of heavy-tailed or partially corrupted data \citep{eklund2016cluster, wang2015high, szegedy2013intriguing}. Compared to earlier works in this area pioneered by \citet{tukey1963less} and \citet{huber1964,huber1965}, modern treatment of this topic focuses more on handling multivariate data. For instance, in the area of robust mean estimation, \citet{diakonikolas2020outlier,lugosi2021robust,depersin2022robust,minasyan2023statistically} have proposed various extensions of univariate robust mean procedures such as the trimmed mean estimator \citep{tukey1963less} and median of means estimator \citep{nemirovskij1983problem,jerrum1986random,alon1996space} to the multivariate setting. We witness a similar surge in research interest in the area of robust covariance estimation \citep{mendelson2020robust,abdalla2022covariance,minasyan2023statistically}. 

In this work, we focus on the topic of robust linear regression with potentially multivariate response variable, where a covariate-response pair $(X,Y)\in\mathbb{R}^p\times \mathbb{R}^d$ with joint distribution $P^{(X,Y)}$ is generated from
\begin{align}
Y = b^*X+\varepsilon, \label{mlm}
\end{align}
with regression coefficients $b^*\in\mathbb{R}^{d\times p}$, a zero-mean covariate vector $X \in \mathbb{R}^p$ and a noise vector $\varepsilon$ taking values in $\mathbb{R}^d$. Given independent and identically distributed (i.i.d.\!) covariate-response pairs $(X_1, Y_1), \ldots (X_n, Y_n)$ drawn from  $P^{(X, Y)}$, our goal is to estimate $b^*$. The contamination of the a linear model is mainly captured by two different mechanisms: heavy-tailed noise \citep{catoni2012challenging, lugosi2019mean} and outlier contamination \citep{szegedy2013intriguing, huber2004robust}. When $d=1$, both directions have thrived in recent years \citep{nguyen2013exact, fan2017estimation, sun2020adaptive, sasai2020robust, pensia2020robust, adomaityte2023high}. However, in the context of multiple-output linear regression, where $d > 1$, the literature is notably scant. In this work, we go beyond the case of the univariate response variable to the case of the multiple-output linear model under possibly heavy-tailed noise.  

One popular way to tackle the heavy-tailed error is based on the quantile regression \citep{koenker1978regression, wang2007robust, li20081, zou2008composite, wu2009variable, belloni2011}. In the case of univariate linear regression, although the ordinary least square (OLS) estimator is widely recognized as the best unbiased estimator when the random error follows a Gaussian distribution since it attains the Cramer--Rao lower bound, it may not perform well when the random error is heavy-tailed, as the mean squared error of the OLS estimator is proportional to the second moment of the random error term. This issue can be addressed by using the quantile regression estimator \citep{koenker1978regression}. Unlike the OLS estimator, which estimates the conditional mean function, the quantile regression estimator aims to estimate the conditional quantile function of $Y$ given $X$. Thanks to the robustness of quantiles, the quantile regression estimator is less affected by outliers or heavy-tailed distributions. However, the relative efficiency of the quantile regression estimator compared to the OLS estimator can be arbitrarily small based on their respective asymptotic variances. \cite{zou2008composite} proposed a solution to this issue through the composite quantile regression (CQR) method, whose loss function aggregates multiple quantile regression loss functions. Specifically, for $d=1$ and any $K \in \mathbb{N}$, the CQR estimator $\tilde b$ is obtained by the following optimization problem 
\begin{align}
    (\hat{q}_1, \ldots, \hat{q}_K, \tilde{b}) = \argmin_{q_1, \ldots, q_K \in \mathbb{R}, \ b \in \mathbb{R}^{d \times p} }\sum_{i = 1}^n \sum_{k = 1}^K \rho_{\tau_k}(Y_i - bX_i - q_k), \label{opt: cqr_est}
\end{align}
where $\rho_\tau(t)$ is the so-called check function defined as $\rho_\tau(t) = \max\{t, 0\} + (\tau - 1)t$ for any $t \in \mathbb{R}$, and $\tau_k = k/(K+1)$. \citet{zou2008composite} showed that the CQR estimator can achieve at least $70\%$ relative efficiency compared to the OLS estimator even for Gaussian noise. However, when $d \geq 2$, the CQR estimator $\tilde b$ does not have a natural extension due to the lack of a proper definition for multivariate rank/quantile and the corresponding multivariate check function. 

One of the key contributions of this study is the development of a multiple-output composite quantile regression (MCQR) estimator. The definition of our proposed estimator is closely related to the concept of the Monge--Kantorovich (MK) ranks/quantiles, which are multivariate generalization of ranks and quantiles from the view of optimal transport developed by \citet{chernozhukov2017monge} and \citet{hallin2021distribution}. Intuitively, the univariate cumulative distribution function (CDF) and the quantile function of any probability distribution $P^X$ can be viewed as optimal transport maps between $P^X$ and a reference distribution $U[0,1]$. This perspective allows for a natural extension of ranks and quantiles to multivariate distributions. Compared to many previous extensions based on Tukey's depth \citep{tukey1975mathematics}, MK-ranks/quantiles have several advantages, including the ability to capture more complex and possibly non-convex quantile contours and allowing for distribution-free inference in multivariate settings. Please refer to \citet{hallin2022measure} for a comprehensive introduction to the MK-ranks/quantiles.

A crucial observation in constructing our MCQR estimation is that the univariate CQR loss function can be equivalently described as the  \textit{Wasserstein product} between the empirical distribution of the residuals $(Y_i - bX_i: i=1,\ldots,n)$ and the uniform distribution $U[0,1]$. Here, the `Wasserstein product' between two distributions $P$ and $Q$ is the maximum of $\mathbb{E}(XY)$ over all couplings $(X,Y)$ with marginal distributions $X\sim P$ and $Y\sim Q$. When $Q$ is viewed as a reference distribution, this optimal coupling is exactly the same as in MK-quantiles. See~\eqref{eq: WassProd} for a formal definition and more detailed discussion. This alternative viewpoint allows us to circumvent the need of defining individual multivariate check functions and instead formulate the MCQR loss in terms of the MK-quantiles. It is worthwhile to note that while various previous studies in the literature have attempted to extend the concept of quantile regression to the multiple-output setting \citep{hallin2010multivariate, Kong2012-fm, hallin2015local, carlier2016vector, del2022nonparametric}, the majority have concentrated on estimating the quantile contours rather than focusing on the robust estimation of the regression coefficients. See Section \ref{sec: MCQR_method} for a more detailed discussion of our proposed method. 

Then in Section \ref{Sec: MCQR_Theory} we investigate the theoretical guarantees of the MCQR estimator. We first prove the consistency result when the random noise is only assumed to have finite $\ell$-th moment for some $\ell > 2$ (see Theomre \ref{thm: Consistency}). Then a faster convergence rate is established when we assume a noise distribution with a sub-Weibull tail (see Theorem \ref{thm: FasterRate}). We highlight that the MCQR procedure represents an M-estimation problem incorporating the Wasserstein distance within its loss function, for which the empirical process theory tools used in traditional M-estimators are not directly applicable. To the best of our knowledge, Theorem \ref{thm: Consistency} and Theorem \ref{thm: FasterRate} are the first results that establish the consistency and convergence rate of an M-estimation where the loss function involves the 2-Wasserstein distance. New theoretical tools were developed along the way, which we believe may be of independent interest in future research. Please refer to Section \ref{Sec: MCQR_Theory} for detailed descriptions of the Theorems and proof sketches. 

\subsection{Related works}\label{Sec:RelatedWorks}

Various definitions of multiple-output quantile regression have been proposed in the past, including the depth-based directional method \citep{hallin2010multivariate, Kong2012-fm, hallin2015local}, the M-quantile \citep{koltchinskii1997m}, the spatial quantile \citep{chaudhuri1996geometric, chakraborty2014spatial}, among others. As remarked above, unlike our work, all these approaches focus on estimating the quantile contours of the response variable. In addition, these definition of multivariate quantiles do not preserve the quintessential attributes of the univariate quantile, notably distribution-freeness and the Glivenko-Cantelli property \citep{hallin2021distribution}. Furthermore, their quantile contours are constrained to be convex, which hinders performance when data distribution exhibits non-convex level sets.

In contrast, \citet{chernozhukov2017monge} and \citet{hallin2021distribution} introduced a novel multivariate quantile/rank framework based on optimal transport. This framework adeptly captures level set non-convexities while retaining the distribution-freeness and the Glivenko-Cantelli property, hallmarks of the univariate rank/quantile \citep{chernozhukov2017monge, hallin2021distribution}. Several applications in multivariate statistics have been established successfully \citep{ deb2021multivariate,  del2022nonparametric, hallin2023efficient, shi2024distributiofree}. We refer to a comprehensive survey \cite{hallin2022measure} and references therein. Building upon this groundwork, \citet{carlier2016vector} and \citet{del2022nonparametric} proposed two notions of multiple-output quantile regression, though concentrating primarily on the estimation of conditional quantile functions rather than the regression coefficients themselves.

\subsection{Notation}
For $n\in\mathbb{N}$, write $[n]:=\{1,\ldots,n\}$. 
For any vector $v\in\mathbb{R}^d$, we write $\|v\| := (\sum_{j\in[d]} v_j^2)^{1/2}$. For any matrix $M\in\mathbb{R}^{p\times d}$, we define $\|M\|_{\mathrm{F}} := (\Tr(M^\top M))^{1/2}$. We denote $\mathcal{S}^{d-1}$ to be the unit sphere in $\mathbb{R}^d$. For any measurable function $f: X \to \mathbb{R}$, we denote $f^+(x):= \max\{f(x), 0\}$ as its positive part, and $f^-(x) := \max\{-f(x), 0\}$ as its negative part. We write $\mathcal{B}$ as the Borel $\sigma$-algebra of $\mathbb{R}^d$. Write $\mathcal{P}_{\ell}(\mathbb{R}^d)$ as the set of Borel probability measures defined on $(\mathbb{R}^d, \mathcal{B})$ with finite $\ell$-th order moments for $\ell \in \mathbb{N}$ and $\mathcal{P}_{ac}(\mathbb{R}^d)$ be the set of probability measures on the same space that are absolutely continuous with respect to the Lebesgue measure. For any random variable $X$ on $\mathbb{R}^d$, write $P^X$ for the associated probability measure and $P_n^X:= \frac{1}{n}\sum_{i = 1}^n \delta_{X_i}$ for the associated empirical distribution where $X_1,\ldots,X_n$ are $n$ independent copies of $X$ and $\delta_x$ denote the Dirac measure on $x$. 

\section{The MCQR construction}\label{sec: MCQR_method}
In this section, we present a generalization of the traditional CQR when the dimension of the response variable $d$ is greater than $1$. We start by revisiting the univariate CQR estimator, and showing that at the population level, it can be seen as the minimizer of the Wasserstein product between $P^{Y-bX}$ and the uniform reference distribution $U[0, 1]$, which allows a multivariate generalization. Moreover, we justify that the choice of the reference distribution does not affect the population minimizer in this problem, thus allowing us to select more natural reference distributions in multivariate settings.

\subsection{Univariate CQR revisited}
Since $q_1,\ldots,q_K$ in~\eqref{opt: cqr_est} have the interpretation of quantiles associated with $\tau_1,\ldots,\tau_K$, it is natural to further constrain the optimization by assuming $q_1\leq \cdots\leq q_K$. Let $\mathcal{M}$ denote the set of all increasing functions on $\mathbb{R}$, then \eqref{opt: cqr_est} with this additional constraint can be viewed as the empirical version of the following optimization problem 
\begin{equation}
   \argmin\limits_{q \in\mathcal{M}, b \in \mathbb{R}^{1 \times p}}\mathbb{E} \Bigl\{\rho_T \bigl(Y- bX - q(T)\bigr) \Bigr\} = \argmin\limits_{q \in\mathcal{M}, b \in \mathbb{R}^{1 \times p}}\mathbb{E} \Bigl\{\int_0^1 \rho_\tau \bigl(Y- bX - q(\tau)\bigr) d \tau \Bigr\},\label{opt: cqr} 
\end{equation}
where $(X,Y)\sim P^{(X,Y)}$ and $T\sim U[0,1]$.
The following lemma indicates that, when $d = 1$, the true regression coefficient $b^*$ in~\eqref{mlm} and the quantile function $q^*_{\varepsilon}: \tau \mapsto \inf\{y\in\mathbb{R}: P^{\varepsilon}(-\infty, y] \geq \tau\}$ of $\varepsilon$ form a solution of \eqref{opt: cqr}. As we will see from Lemma~\ref{le: 1dmcqr} and Proposition~\ref{prop: unique}, this is actually the unique solution to the problem.
\begin{lemma}\label{le: justification}
Under the linear model \eqref{mlm}, we have  
\[
        (b^*, q_{\varepsilon}^*) \in \argmin_{b \in \mathbb{R}^{1 \times p}, q \in \mathcal{M}} \E\int_{0}^1 \rho_\tau (Y - bX - q(\tau)) \,d\tau .
\]
\end{lemma}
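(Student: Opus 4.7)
The plan is to split the joint minimization in $(b,q)$ into an inner minimization over $q \in \mathcal{M}$ for each fixed $b$, followed by an outer minimization over $b$. For the inner step I rely on the classical pointwise characterization of the $\tau$-quantile as a minimizer of the check function; for the outer step I rewrite the resulting value function in terms of the Wasserstein product between $P^{Y-bX}$ and $U[0,1]$ and exploit the fact that this product is a supremum over couplings, so that replacing $\varepsilon$ by $\varepsilon+(b^*-b)X$ can only increase it.

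Let $\xi_b := Y - bX$. Since for each $\tau \in (0,1)$ the quantile $q_{\xi_b}^*(\tau)$ minimizes $q \mapsto \E\rho_\tau(\xi_b - q)$ over $\mathbb{R}$, and since $\tau \mapsto q_{\xi_b}^*(\tau)$ is nondecreasing, this function lies in $\mathcal{M}$ and achieves the inner minimum. Write $V(b)$ for this inner minimum value. A direct expansion using $\rho_\tau(t) = t(\tau - \one\{t \leq 0\})$ yields $\E\rho_\tau(\xi - q_\xi^*(\tau)) = \tau\,\E\xi - \E[\xi\,\one\{\xi \leq q_\xi^*(\tau)\}]$; integrating over $\tau$ by Fubini and using $\int_0^1 \one\{\xi \leq q_\xi^*(\tau)\}\,d\tau = 1 - F_\xi(\xi)$ gives
$$V(b) \;=\; \E[\xi_b F_{\xi_b}(\xi_b)] - \tfrac{1}{2}\E\xi_b \;=\; \wip{P^{\xi_b}}{U[0,1]} - \tfrac{1}{2}\E Y,$$
using that the Wasserstein product on the right is attained by the comonotone coupling $(\xi_b, F_{\xi_b}(\xi_b))$ and that $\E\xi_b = \E Y$ (because $\E X = 0$). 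The additive term $\tfrac{1}{2}\E Y$ does not depend on $b$, so it suffices to show that $b^*$ minimizes $b \mapsto \wip{P^{\xi_b}}{U[0,1]}$.

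For the outer step, write $\eta := (b^*-b)X$, so that $\xi_b = \varepsilon + \eta$ and $\xi_{b^*} = \varepsilon$. Under the usual assumption $X \perp \varepsilon$, we have $\eta \perp \varepsilon$ and $\E\eta = 0$. I will use the (generally suboptimal) coupling $(\varepsilon+\eta,\, F_\varepsilon(\varepsilon))$ between $P^{\varepsilon+\eta}$ and $U[0,1]$, for which independence and $\E\eta = 0$ yield
$$\E\bigl[(\varepsilon+\eta) F_\varepsilon(\varepsilon)\bigr] \;=\; \E[\varepsilon F_\varepsilon(\varepsilon)] + \E\eta \cdot \E[F_\varepsilon(\varepsilon)] \;=\; \wip{P^\varepsilon}{U[0,1]}.$$
Because the Wasserstein product is a supremum over couplings, $\wip{P^{\xi_b}}{U[0,1]} \geq \wip{P^\varepsilon}{U[0,1]}$, which rearranges to $V(b) \geq V(b^*) = \E\int_0^1 \rho_\tau(Y - b^*X - q_\varepsilon^*(\tau))\,d\tau$ for every $b \in \mathbb{R}^{1\times p}$, proving the claim.

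The one step that deserves care is where $F_\varepsilon(\varepsilon)$ is used as a uniform rank: if $\varepsilon$ has atoms then $F_\varepsilon(\varepsilon)$ is not $U[0,1]$. The standard fix is to enlarge the probability space by an auxiliary independent uniform and replace $F_\varepsilon(\varepsilon)$ by a jittered rank that is uniform on $[F_\varepsilon(\varepsilon^-),\,F_\varepsilon(\varepsilon)]$ conditional on $\varepsilon$; such a rank is unconditionally $U[0,1]$, comonotone with $\varepsilon$, and still independent of $\eta$, so the coupling argument is unchanged. The same jittering must then be used in the Fubini identity for $V(b)$; both computations go through identically. All remaining manipulations are routine given the integrability implied by $\E|Y| + \E\|X\| < \infty$.
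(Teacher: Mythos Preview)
Your proof is correct but follows a genuinely different route from the paper's. The paper argues in two lines: conditionally on $X=x$, the pair $(b^*, q_\varepsilon^*(\tau))$ minimizes $\E[\rho_\tau(Y - bX - q)\mid X=x]$ for every $x$ and every $\tau$ (because $Y - b^*X = \varepsilon$ is independent of $X$, so the conditional $\tau$-quantile is $q_\varepsilon^*(\tau)$ regardless of $x$); integrating out $x$ and then $\tau$ yields the claim immediately. No Wasserstein products, no explicit value function, no coupling argument.

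Your approach instead computes the profiled objective $V(b)$ explicitly and identifies it (up to the additive constant $-\tfrac{1}{2}\E Y$) with the Wasserstein product $\wip{P^{Y-bX}}{U[0,1]}$, which is exactly the content of the paper's Lemma~\ref{le: 1dmcqr}; you then show $b^*$ minimizes this via a coupling that exploits $\E[(b^*-b)X]=0$ and independence, which is essentially the one-dimensional version of the argument behind Proposition~\ref{prop: unique}. So you have folded the proofs of Lemmas~\ref{le: justification} and~\ref{le: 1dmcqr} and the minimizer part of Proposition~\ref{prop: unique} into a single argument. This is heavier for the narrow purpose of Lemma~\ref{le: justification}, but it is more informative about \emph{why} the result is true in the optimal-transport picture and it dovetails better with the paper's subsequent development. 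The paper's conditioning proof, by contrast, is shorter and requires less regularity bookkeeping (no need for the jittered-rank caveat), at the cost of giving no hint of the Wasserstein connection.
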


In fact, an inspection of the proof (see Appendix \ref{apx: LemmaJustification}) of the above lemma reveals that if $\tau_1,\ldots,\tau_K$ converges to a distribution $P^Z$ with support $\mathcal{Z}$ rather than to $U[0,1]$, then a similar result to Lemma~\ref{le: justification} holds provided that we modify the convex check functions $\rho_\tau:\mathbb{R}\to\mathbb{R}^+$ for $\tau\in\mathcal{Z}$ so that they satisfy $F_W^{-1} \circ F_Z(\tau)\in \argmin_{\theta}\E \rho_{\tau}(W- \theta)$ for all random variables $W$ with absolutely continuous distributions. However, generalizing the check functions beyond the univariate setting is difficult. While some attempts have been made \citep{chaudhuri1996geometric, koltchinskii1997m}, the resulting multivariate quantiles, defined through the minimizer of these generalized check functions, lack key properties of their univariate counterparts (see our discussion in Section~\ref{Sec:RelatedWorks}, as well as empirical comparisons in Section~\ref{sec: imp}). Instead, our work takes a different approach and generalizes the CQR population loss function as a whole rather than individual check functions. A key observation that allows us to achieve this is the following reformulation of the loss function of~\eqref{opt: cqr} in Lemma~\ref{le: 1dmcqr} below. 
To state the lemma,  we define the \emph{Wasserstein product} between $P, Q \in\mathcal{P}_2(\mathbb{R}^d)$ as
\begin{align}
     \wip{P}{Q} := \sup_{\gamma \in \mathcal{C}(P, Q)} \int \langle x , y \rangle d\gamma(x, y),  \label{eq: WassProd} 
\end{align}
where $\mathcal{C}(P, Q)$ denotes the set of all couplings between $P$ and $Q$, i.e. \!for any $\gamma \in \mathcal{C}(P, Q)$, and measureable subsets $A$, $B \subset \mathbb{R}^d$, we have $\gamma(A \times \mathbb{R}^d) = P(A)$ and $\gamma(\mathbb{R}^d \times B) = Q(B)$. The name `Wasserstein product' stems from its intrinsic link with the 2-Wasserstein distance: $\frac{1}{2}\mathcal{W}_2^2(P, Q) = \frac{1}{2}\int \|x\|^2 \, dP(x) + \frac{1}{2}\int \|y\|^2 dQ(y) - \wip{P}{Q}$. We will often slightly abuse notation to write $\wip{X}{Y}$ instead of  $\wip{P^X}{P^Y}$.

\begin{lemma}\label{le: 1dmcqr}
Suppose that $X\sim P^X$ is mean-zero with finite second moments. For $U\sim U[0, 1]$, and a fixed $b\in\mathbb{R}^{1\times p}$, we have 
\[
\inf_{q\in\mathcal{M}} \mathbb{E} \Bigl\{\int_0^1 \rho_\tau \bigl(Y- bX - q(\tau)\bigr) d \tau \Bigr\} + \frac{1}{2}\E Y = \wip{Y - bX}{U}.
\]
\end{lemma}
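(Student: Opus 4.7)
My plan is to compute the infimum over $q \in \mathcal{M}$ by pointwise minimization, and then identify the resulting value with the comonotone coupling that attains the Wasserstein product in one dimension.

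Write $Z := Y - bX$, so that $\E Z = \E Y$ by the mean-zero assumption on $X$. The starting point is the elementary identity $\rho_\tau(t) = \tau t + (-t)^+$ valid for $\tau \in [0,1]$, which one verifies by cases on the sign of $t$. Applying it with $t = Z - q(\tau)$, taking expectation, and using Fubini gives
\[
\E \int_0^1 \rho_\tau(Z - q(\tau))\,d\tau = \tfrac{1}{2}\E Y + \int_0^1 \bigl\{\E(q(\tau) - Z)^+ - \tau\,q(\tau)\bigr\}\,d\tau.
\]
The integrand of the remaining integral depends on $q$ only through its scalar value $s = q(\tau)$ at each $\tau$, so I would minimize pointwise. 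The function $s \mapsto \E(s-Z)^+ - \tau s$ is convex with subdifferential $F_Z(s) - \tau$, so a minimizer is the generalized quantile $s^\star = F_Z^{-1}(\tau) := \inf\{y: F_Z(y) \geq \tau\}$; since $\tau \mapsto F_Z^{-1}(\tau)$ is automatically nondecreasing, it lies in $\mathcal{M}$, so the pointwise lower bound is attained within the admissible class.

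It remains to evaluate the infimum at $q^\star = F_Z^{-1}$. I would expand $\E(a-Z)^+ = aF_Z(a) - \E[Z\mathbf{1}\{Z \leq a\}]$ at $a = F_Z^{-1}(\tau)$, swap the order of integration using the elementary identity $\int_0^1 \mathbf{1}\{Z \leq F_Z^{-1}(\tau)\}\,d\tau = 1 - F_Z(Z-)$, and track the atomic contributions on each atom interval $(F_Z(a-), F_Z(a)]$, after which the integral simplifies to $-\E Z + \int_0^1 F_Z^{-1}(u)\,u\,du$. The final step is the one-dimensional rearrangement (Hardy--Littlewood) inequality, which identifies the comonotone coupling $(F_Z^{-1}(U), U)$ as a supremum-attaining coupling in the definition of $\wip{Z}{U}$, giving $\wip{Z}{U} = \int_0^1 F_Z^{-1}(u)\,u\,du$ without any continuity assumption on $P^Z$. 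Combining everything and using $\E Z = \E Y$ then delivers the claimed identity.

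The main technical nuisance is the bookkeeping when $P^Z$ has atoms: on each atom interval one has $F_Z(F_Z^{-1}(\tau)) > \tau$ and the pointwise minimizer is non-unique, so the clean substitution $F_Z(F_Z^{-1}(\tau)) = \tau$ used in the continuous case fails. A direct computation shows that the resulting extra contribution $\tfrac{1}{2}\sum_a a\,\Prob(Z = a)^2$ from the atom intervals matches exactly the gap $\E[Z F_Z(Z)] - \E[Z F_Z(Z-)]$, so the simplification $-\E Z + \int_0^1 F_Z^{-1}(u)u\,du$ still holds in full generality. As a backup strategy, one could first prove the identity for absolutely continuous $P^Z$ and then extend by convolving $Z$ with a vanishing independent Gaussian $\sigma N(0,1)$ and sending $\sigma \downarrow 0$, using continuity of both sides under $\mathcal{W}_2$-convergence.
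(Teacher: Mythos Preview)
Your approach is correct but takes a genuinely different route from the paper's. You minimize pointwise in $q(\tau)$ to identify the optimal $q^\star = F_Z^{-1}$, then evaluate the loss at this minimizer by direct integration and recognize the result as the value of the comonotone (Hardy--Littlewood) coupling, which achieves $\wip{Z}{U}$ in one dimension. The paper instead performs the change of variables $q \mapsto \phi$ with $\phi(t) = \int_0^t q(\tau)\,d\tau$, turning the infimum over $\mathcal{M}$ into an infimum over convex functions $\phi$, and rewrites the objective as $\inf_{\phi}\{\E\phi^*(Y-bX) + \E\phi(U)\}$ via the Legendre transform; this is then identified directly with $\wip{Y-bX}{U}$ by Kantorovich strong duality. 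Your argument is more elementary and self-contained, requiring only the one-dimensional rearrangement inequality plus the atom bookkeeping you outline; the paper's argument is slicker, sidesteps any case analysis on atoms of $P^Z$, and foregrounds the convex-dual structure that is precisely what motivates the multivariate extension of the loss in the next subsection.
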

The proof is deferred to Appendix \ref{apx: Lemma1dmcqr}. Writing $\mathcal{L}(b; U) := \wip{Y-bX}{U}$, Lemma~\ref{le: 1dmcqr} and Equation~\eqref{opt: cqr} imply that, the optimizer in $b$ for the population CQR loss function in~\eqref{opt: cqr} is equal to  $\argmin_{b \in \mathbb{R}^{d \times p}} \mathcal{L}(b; U)$ when $d=1$.

\subsection{Multiple-output CQR via optimal transport}\label{sec: MCQR via OT}

With the help of Lemma~\ref{le: 1dmcqr}, we may regard $\mathcal{L}(b; U)$ as a generalized population CQR loss function for the multiple-output case ($d \geq 2$) for suitably chosen reference random vector $U$. The following proposition (see Appendix \ref{apx: PropUnique} for proof) verifies that under a mild condition this loss has a unique minimizer and that is independent of the specific choice of $U$ (see Appendix \ref{apx: ref_distribution} for an intuitive illustration).  

\begin{proposition}\label{prop: unique}
If $P^\varepsilon, P^U \in  \mathcal{P}_2(\mathbb{R}^d) \cap \mathcal{P}_{ac}(\mathbb{R}^d)$ and  $P^X$ is not a point mass, then $b^*$ is the unique minimizer of $\mathcal{L}(b; U)$.
\end{proposition}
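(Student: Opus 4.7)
My plan is to parametrize $b$ by the deviation $\Delta := b - b^*$ so that $Y - bX = \varepsilon - \Delta X$, and to exploit the polar identity
\[
\wip{P}{Q} = \tfrac{1}{2}\int \|x\|^2 \, dP(x) + \tfrac{1}{2}\int \|y\|^2 \, dQ(y) - \tfrac{1}{2}\mathcal{W}_2^2(P, Q)
\]
already noted in Section~\ref{sec: MCQR_method}, to trade the Wasserstein product for the $2$-Wasserstein distance. Using $\varepsilon \perp X$ (implicit in the linear model) together with $\E X = 0$, expanding the squared norm gives $\E\|\varepsilon - \Delta X\|^2 = \E\|\varepsilon\|^2 + \Tr(\Delta \Sigma_X \Delta^\top)$, where $\Sigma_X$ is the covariance of $X$. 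Substituting into the polar identity, proving $\mathcal{L}(b; U) \geq \mathcal{L}(b^*; U)$ reduces to the one-sided upper bound
\[
\mathcal{W}_2^2(\varepsilon - \Delta X, U) \;\leq\; \mathcal{W}_2^2(\varepsilon, U) + \Tr(\Delta \Sigma_X \Delta^\top).
\]

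To obtain this bound, I would invoke Brenier's theorem, which applies because $P^U \in \mathcal{P}_{ac}$: there exists a convex potential $\psi$ such that $T := \nabla\psi$ pushes $P^U$ forward onto $P^\varepsilon$ with $\mathcal{W}_2^2(\varepsilon, U) = \E\|T(U) - U\|^2$. Take a copy of $X$ independent of $U$ (consistent with $X \perp \varepsilon$); then $(T(U) - \Delta X, U)$ is a valid coupling of $P^{\varepsilon - \Delta X}$ and $P^U$. Expanding $\E\|T(U) - \Delta X - U\|^2$ yields the required bound, the cross-term $2\E\langle T(U) - U, \Delta X\rangle$ vanishing because $X \perp U$ and $\E X = 0$. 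This establishes $\mathcal{L}(b; U) \geq \mathcal{L}(b^*; U)$ for every $b$.

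For uniqueness I would analyse the equality case. Equality forces the explicit coupling $(T(U) - \Delta X, U)$ to be optimal for $\mathcal{W}_2^2(\varepsilon - \Delta X, U)$. Since $P^{\varepsilon - \Delta X}$ inherits absolute continuity from $P^\varepsilon$, a second application of Brenier's theorem asserts that the optimal coupling is almost surely unique and of the form $(\nabla\phi(U), U)$ for some convex $\phi$, whence $\Delta X = T(U) - \nabla\phi(U)$ almost surely. The right-hand side depends only on $U$ and the left-hand side only on $X$; combined with $X \perp U$, a random variable that is simultaneously a function of $U$ and independent of $U$ must be almost surely constant, and by $\E X = 0$ this constant is $0$, so $\Delta X \equiv 0$. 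Under the stated non-degeneracy of $P^X$ (understood to preclude $\Delta X = 0$ a.s.\ for $\Delta \neq 0$), this yields $\Delta = 0$ and hence $b = b^*$.

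The main obstacle I anticipate is the equality analysis: pairing the Brenier-uniqueness statements for $\wip{\varepsilon}{U}$ and for $\wip{\varepsilon - \Delta X}{U}$ with the probabilistic independence $X \perp U$ to extract $\Delta X = 0$ a.s.\ is the non-routine step, and it relies crucially on $P^U \in \mathcal{P}_{ac}$ so that Brenier uniqueness is available on both sides. The remaining steps — the polar identity and the moment expansion — are bookkeeping.
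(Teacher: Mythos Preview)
Your proof is correct and follows essentially the same route as the paper: both construct the coupling $(T(U)+(b^*-b)X,\,U)$ by composing the Brenier map $T$ from $P^U$ to $P^\varepsilon$ with an independent translation, use it to upper-bound $\mathcal{W}_2^2(P^{Y-bX},P^U)$ and hence establish $\mathcal{L}(b;U)\geq\mathcal{L}(b^*;U)$, and for uniqueness argue that equality forces this coupling to coincide with the unique Brenier coupling, so that $(b^*-b)X$ is a measurable function of $U$ (equivalently of $\varepsilon$) and therefore almost surely constant by independence. One minor remark: Brenier uniqueness for the transport from $U$ requires only $P^U\in\mathcal{P}_{ac}$, so you do not actually need $P^{\varepsilon-\Delta X}$ to be absolutely continuous (though it is, being a convolution with $P^\varepsilon$).
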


There are various choices of the reference distribution of $U$, including the uniform distribution on the unit cube  \citep{chernozhukov2017monge, deb2021multivariate} and the spherical uniform distribution \citep{hallin2021distribution, del2022nonparametric}. In this paper, we opt for the standard multivariate normal distribution as the reference distribution, primarily motivated by its advantageous theoretical characteristics.  Moreover, we will also omit the specification of the reference distribution in the loss function and simply write it as $\mathcal{L}(b)$ throughout the rest of the paper.

Proposition~\ref{prop: unique} motivates the following natural estimator of $b^*$ based on the Wasserstein product of the empirical distributions.
\begin{definition}
Given i.i.d.\ covariate-response pairs $(X_1,Y_1),\ldots,(X_n,Y_n)$ generated as in \eqref{mlm} and  a reference distribution $P^U \in  \mathcal{P}_2(\mathbb{R}^d) \cap \mathcal{P}_{ac}(\mathbb{R}^d)$ and $U_1, \ldots, U_m \stackrel{i.i.d.}{\sim} P^U$, the \emph{MCQR estimator} for $b^*$ is defined as
\begin{align}
\hat{b} \in \argmin_{b \in \mathbb{R}^{d\times p}}\mathcal{L}_{n, m}(b), \ \text{where $\mathcal{L}_{n, m}(b):= \wip{P_n^{Y-bX}}{P_m^U}$} \label{opt: mcqr_est}.
\end{align}
\end{definition}
The optimization procedure above is an M-estimation problem. However, unlike classical M-estimation problems, the empirical loss function cannot be viewed as an empirical process of the population loss (in fact, $\mathbb{E}\wip{P_n^{Y-bX}}{P_m^U} \neq \wip{P^{Y-bX}}{P^U}$), which prevents us from applying traditional empirical process theory techniques to obtain the convergence rate results directly. Instead, a collection of new theoretical results is developed to better understand both the population and empirical version of the Wasserstein product loss. Please refer to Section \ref{Sec: MCQR_Theory} for more details. Secondly, it is worth noting that the empirical reference distribution $P_m^U$ is distinct from the distribution of $\tau_k$'s in~\eqref{opt: cqr_est} when $d=1$. Instead, we employ it as the reference distribution to redefine the distribution function and the quantile function (refer to Appendix \ref{apx: ref_distribution} for an example). Thus, even when $d=1$ with a uniform reference distribution, the plug-in estimator in~\eqref{opt: mcqr_est} does not reduce to the univariate CQR estimator~\eqref{opt: cqr_est}. This can also be seen from the proof of Lemma \ref{le: 1dmcqr}. Therefore, our proposed MCQR estimator~\eqref{opt: mcqr_est} is different from the univariate CQR estimator that is studied in \cite{zou2008composite} but shares the same loss function at the population level. See also Figure~\ref{fig: ParetoContam} and Figure~\ref{fig: GaussianContam} for an interesting difference in their robustness to contamination in one dimension.

\subsection{Solving MCQR via linear programming}
\label{Sec:LP}
We describe here how the optimization problem can be solved in practice. Given  $\{(X_i, Y_i)\}_{i = 1}^n \subset \mathbb{R}^{p} \times \mathbb{R}^{d}$ and $\{U_i\}_{i = 1}^m$, we define ${X} = (X_1, \ldots, X_n)^\top \in \mathbb{R}^{n \times p}$ and $Y = (Y_1, \ldots, Y_n)^\top \in \mathbb{R}^{n \times d}$ and  $U = (U_1, \ldots, U_m)^\top \in \mathbb{R}^{m \times d}$. Define 
\[
\mathcal{C}_{n,m} =\{A\in\mathbb{R}_+^{m\times n}: A\mathbf{1}_n = \mathbf{1}_m / m \text{ and } A^\top \mathbf{1}_m = \mathbf{1}_n / n \}.
\]
Every $\pi\in \mathcal{C}_{n,m}$ represents a coupling of $P^{(X,Y)}_n$ and $P^{U}_m$ in the sense that $\pi_{i,j}$ denotes the mass to be transported from $(X_i,Y_i)$ to $U_j$. Then by the definition of $\wip{\cdot}{\cdot}$, the optimization problem in~\eqref{opt: mcqr_est} can be written as
\begin{align*}
    \min_{b \in \mathbb{R}^{d \times p}}\max\limits_{\pi \in \mathcal{C}_{n, m}} \Tr\bigl( U^\top \pi(Y-Xb^\top)\bigr) &= \max\limits_{\pi \in \mathcal{C}_{n, m}} \min_{b \in \mathbb{R}^{d \times p}}\Tr\bigl( U^\top \pi(Y-Xb^\top)\bigr) \\ 
    & = \max_{\pi \in \mathcal{C}_{n, m}} \min\limits_{b\in \mathbb{R}^{d \times p}} \bigl\{ \Tr( U^\top \pi Y ) - \Tr(U^\top \pi Xb^\top)\bigr\},
\end{align*}
where the exchange of the minimum and maximum is allowed as the objective is linear \citep{v1928theorie}. The dual formulation on the right-hand side is easier to handle since its inner minimum is equal to $-\infty$ unless $U^\top \pi X = 0$. Hence, the dual problem of~\eqref{opt: mcqr_est} is
\begin{align*}
    \max_{\pi \in \mathcal{C}_{n,m}} & \quad \Tr( U^\top \pi Y )\\
    \mathrm{s.t.} & \quad U^\top \pi X = 0,
\end{align*}
which can be solved by standard linear programming solvers. After obtaining the dual optimizer $\hat \pi$, the MCQR estimator $\hat{b}$ is obtained via complementary slackness.

\section{Theoretical guarantees}\label{Sec: MCQR_Theory}
In this section, we investigate the theoretical performance of the proposed estimator when adopting a standard Gaussian reference distribution $U\sim \mathcal{N}(0,I_d)$. In Theorem \ref{thm: Consistency}, we provide a non-asymptotic bound for the estimation error when only assuming a finite $2+\delta$ moment condition on the random noise term. Furthermore, we demonstrate in Theorem \ref{thm: FasterRate} that in cases where the distributions of both the covariates and the noise exhibit a sub-Weibull tail, the MCQR estimator enjoys a faster rate of convergence to the truth. 

Given a positive definite matrix $\Sigma\in\mathbb{R}^{p\times p}$ and any matrix $A \in \mathbb{R}^{d \times p}$, we define the \emph{matrix Mahalanobis norm} of $A$ with respect to $\Sigma$ as $\|A\|_{\Sigma}:= \Tr^{1/2}(A \Sigma A^\top) = \|A\Sigma^{1/2}\|_{\mathrm{F}}$. We will assume throughout this section that $\E(XX^\top) = \Sigma$. 
\begin{assumption}\label{assumption: ellipcitaldis}
$X$ follows an elliptical distribution, i.e., there exists independent random variable $R$ on $\mathbb{R}_+$ and random vector $Q \sim U(\mathcal{S}^{d-1})$ such that $X = \Sigma^{1/2}QR$.
\end{assumption}

Under this assumption on $X$, we first consider the case when the random noise $\varepsilon$ is only assumed to satisfy a finite moment condition.

\begin{theorem}\label{thm: Consistency}
Suppose $(X,Y), (X_1,Y_1),\ldots,(X_n,Y_n)$ are i.i.d.\ pairs generated according to~\eqref{mlm}, $U_1,\ldots,U_m\stackrel{\mathrm{i.i.d.}}{\sim} 
\mathcal{N}(0,I_d)$. Assume $m\geq n > 1$ and that  Assumption \ref{assumption: ellipcitaldis} holds. If $P^X, P^{\varepsilon} \in \mathcal{P}_\ell(\mathbb{R}^d)$ for $\ell > 2$ 
then there exists $C > 0$ depending only on $\ell, d$ and $p$ such that with probability at least $1 - 4 (\log n)^{-1}$, the MCQR estimator defined in~\eqref{opt: mcqr_est} satisfies
\[
\| \hat{b} - b^*\|_{\Sigma}^2 \wedge 1 \leq C \bigl(n^{-\frac{1}{4}} + n^{-\frac{1}{ d \vee p}} + n^{\frac{2-\ell}{2 \ell}} \bigr)\log m.
\]
\end{theorem}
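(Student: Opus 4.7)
My plan is to follow a modified M-estimation scheme: (i) a curvature lower bound relating $\|\hat b - b^*\|_\Sigma^2\wedge 1$ to the excess population risk $\mathcal{L}(\hat b) - \mathcal{L}(b^*)$; and (ii) a uniform deviation bound controlling the excess risk via the basic inequality $\mathcal{L}(\hat b) - \mathcal{L}(b^*) \leq 2\sup_b|\mathcal{L}_{n,m}(b) - \mathcal{L}(b)|$ together with the definition of $\hat b$ as a minimizer of $\mathcal{L}_{n,m}$. The non-standard feature is that $\mathcal{L}_{n,m}(b)$ is not a sum of i.i.d.\ losses, so classical empirical-process machinery does not apply directly. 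Throughout, the key device is the identity
\[
    \wip{P}{Q} = \frac{1}{2}\int\|x\|^2\,dP(x) + \frac{1}{2}\int\|y\|^2\,dQ(y) - \frac{1}{2}\mathcal{W}_2^2(P,Q),
\]
which separates the Wasserstein product into tractable classical empirical averages plus a 2-Wasserstein term for which Fournier--Guillin-type rates are available.

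\textbf{Curvature.} Applied to $\mathcal{L}$ and using $\varepsilon \perp X$ with $\E\varepsilon = 0 = \E X$, the identity produces the exact decomposition
\[
    \mathcal{L}(b) - \mathcal{L}(b^*) = \frac{1}{2}\|b-b^*\|_\Sigma^2 - \frac{1}{2}\bigl(\mathcal{W}_2^2(P^{Y-bX},P^U) - \mathcal{W}_2^2(P^\varepsilon,P^U)\bigr).
\]
A naive triangle-inequality upper bound on the Wasserstein bracket absorbs the entire quadratic piece and yields only $\mathcal{L}(b) - \mathcal{L}(b^*) \geq 0$, so a sharper analysis is needed. I would introduce the Brenier map $T^*\colon P^U \to P^\varepsilon$ and consider the coupling $(U,\, T^*(U) + (b^*-b)X)$ between $P^U$ and $P^{Y-bX}$ with $X\perp U$; the linear cross term $\E\langle T^*(U) - U,\, (b^*-b)X\rangle$ vanishes because $\E X = 0$. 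To obtain strict improvement beyond this vanishing, I would expand to higher order and exploit the elliptical symmetry of $X$ in Assumption~1 to show a curvature bound of the form $\mathcal{L}(b) - \mathcal{L}(b^*) \gtrsim c\,(\|b-b^*\|_\Sigma^4 \wedge \|b-b^*\|_\Sigma^2)$. This quartic leading behavior is the origin of the square-root relationship between deviation and parameter error in the final rate.

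\textbf{Uniform deviation.} Using the identity once more to rewrite $\mathcal{L}_{n,m}(b) - \mathcal{L}(b)$, the two empirical averages $\frac{1}{n}\sum_i\|Y_i - bX_i\|^2 - \E\|Y-bX\|^2$ and $\frac{1}{m}\sum_j\|U_j\|^2 - \E\|U\|^2$ are controlled by Markov/Chebyshev under the finite $\ell$-th moment hypothesis, contributing the $n^{(2-\ell)/\ell}$ term. The remaining piece is the 2-Wasserstein-squared discrepancy $\mathcal{W}_2^2(P_n^{Y-bX},P_m^U) - \mathcal{W}_2^2(P^{Y-bX},P^U)$. Factoring $a^2 - c^2 = (a+c)(a-c)$ and applying the triangle inequality for $\mathcal{W}_2$ reduces this to $\mathcal{W}_2(P_n^{Y-bX},P^{Y-bX}) + \mathcal{W}_2(P_m^U,P^U)$, each of which admits a Fournier--Guillin bound of order $n^{-1/2} + n^{-2/d} + n^{(2-\ell)/\ell}$ for $\E\mathcal{W}_2^2$. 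To upgrade this pointwise estimate to a supremum over $b$, I would first show $\hat b$ lies in a bounded set with high probability by comparing $\mathcal{L}_{n,m}(\hat b)$ to $\mathcal{L}_{n,m}(b^*)$, then cover this set with an $\epsilon$-net of cardinality $\lesssim \epsilon^{-dp}$ and exploit the Lipschitz continuity of $b \mapsto \mathcal{L}_{n,m}(b) - \mathcal{L}(b)$ in $\|\cdot\|_\Sigma$. Balancing covering cost against the Lipschitz penalty converts the $n^{-2/d}$ Wasserstein bias into the $n^{-2/(d\vee p)}$ contribution, with the $\log m$ factor arising when passing from in-expectation to high-probability statements for $P_m^U$.

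\textbf{Combination and main obstacle.} Combining the curvature and deviation yields $\mathcal{L}(\hat b) - \mathcal{L}(b^*) \lesssim n^{-1/2} + n^{-2/(d\vee p)} + n^{(2-\ell)/\ell}$ times a logarithmic factor with high probability, and the quartic curvature then produces the stated $n^{-1/4} + n^{-1/(d\vee p)} + n^{(2-\ell)/(2\ell)}$ bound on $\|\hat b - b^*\|_\Sigma^2\wedge 1$ after taking the square root. The main obstacle I anticipate is the curvature step: showing quantitatively that $\mathcal{W}_2^2(P^{Y-bX},P^U) - \mathcal{W}_2^2(P^\varepsilon,P^U) \leq \|b-b^*\|_\Sigma^2 - c\,\|b-b^*\|_\Sigma^4$ in a neighborhood of $b^*$ requires a careful second-order analysis of how the Brenier map interacts with the covariate perturbation, and this is precisely where the elliptical-distribution hypothesis is essential. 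A secondary difficulty is making the empirical Wasserstein bound uniform in $b$ without sacrificing the Fournier--Guillin dimensional dependence, which calls for a chaining argument compatible with the natural Lipschitz structure of $b \mapsto P^{Y-bX}$.
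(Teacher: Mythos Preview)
Your high-level plan (curvature plus deviation) matches the paper's, but the execution of both halves is off in ways that matter for the final rate.

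\textbf{Curvature.} You aim for a quartic lower bound $\mathcal{L}(b)-\mathcal{L}(b^*)\gtrsim \|b-b^*\|_\Sigma^4$ via a ``higher-order expansion'' of the Brenier coupling. The paper instead proves a \emph{quadratic} lower bound directly: its Lemma~\ref{le: wipLowerBound} is a Pythagorean-type inequality $\wip{Z+\varepsilon}{U}^2\geq \wip{Z}{U}^2+\wip{\varepsilon}{U}^2$ (obtained by a Slepian interpolation $\sqrt{1-t}\,\psi(Z)+\sqrt{t}\,\phi(\varepsilon)$ of Brenier maps, using that the Gaussian reference is stable under such combinations). Together with the closed-form $\wip{(b^*-b)X}{U}\geq \|b^*-b\|_\Sigma$ for elliptical $X$, this yields $\mathcal{L}(b)-\mathcal{L}(b^*)\geq \sqrt{r^2+\|b-b^*\|_\Sigma^2}-r$, which is $\asymp \|b-b^*\|_\Sigma^2$ near zero. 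Your quartic bound is strictly weaker and, combined with your deviation estimate, would not reproduce the stated rate (see next point).

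\textbf{Deviation.} Your factoring $a^2-c^2=(a+c)(a-c)$ followed by the triangle inequality controls $|\mathcal{W}_2^2-\mathcal{W}_2^2|$ by a constant times $\mathcal{W}_2(P_n^{Y-bX},P^{Y-bX})+\mathcal{W}_2(P_m^U,P^U)$, i.e.\ by $\tau_n^{1/2}$, not by the $\tau_n\sim n^{-1/2}+n^{-2/d}+n^{(2-\ell)/\ell}$ you write in the combination step. With quartic curvature that would give $\|\hat b-b^*\|_\Sigma^2\lesssim \tau_n^{1/4}$, too slow. Separately, the claim that an $\epsilon$-net over $b\in\mathbb{R}^{d\times p}$ ``converts $n^{-2/d}$ into $n^{-2/(d\vee p)}$'' is not how covering works: a union bound over the net inflates constants or adds log factors, it does not change the Fournier--Guillin exponent. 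In the paper the $p$ enters for a completely different reason: the coupling bound
\[
\mathcal{W}_2\bigl(P^{Y-bX},P_n^{Y-bX}\bigr)\leq \|b^*-b\|_\Sigma\,\mathcal{W}_2(P^S,P_n^S)+\mathcal{W}_2(P^\varepsilon,P_n^\varepsilon),
\]
with $S=\Sigma^{-1/2}X\in\mathbb{R}^p$, so one empirical Wasserstein term lives in dimension $p$ and the other in dimension $d$.

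\textbf{What the paper actually does.} It never takes a supremum over $b$. Lemma~\ref{le: innerprodWassControl} gives $|\mathcal{L}(b)-\mathcal{L}_{n,m}(b)|$ a bound that is \emph{linear} in $t=\|b-b^*\|_\Sigma$ (through the display above and through $(\E\|Y-bX\|^2)^{1/2}$). Plugging $b=\hat b$ and $b=b^*$ into the basic inequality and combining with the quadratic curvature yields an inequality of the form $\sqrt{r^2+t^2}-r\lesssim a t+c$ with $a,c\sim \tau_n^{1/2}$; solving for $t$ gives $t^2\wedge 1\lesssim \tau_n^{1/2}$, which is exactly the stated bound. This sidesteps both the localization/circularity issue and any covering over $b$.
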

An immediate consequence of Theorem \ref{thm: Consistency} is that if taking $n$ and $m$ to be large enough such that 
\begin{align}
     C \bigl(n^{-\frac{1}{4}} + n^{-\frac{1}{ d \vee p}} + n^{\frac{2 -\ell}{2 \ell}} \bigr)\log m < 1, \label{Condition: nmSufficientlyLarge}
\end{align}
then we have 
\begin{align}
    \|\hat b - b^*\|_{\Sigma}^2 \leq  C\bigl(n^{-\frac{1}{4}} + n^{-\frac{1}{ d \vee p}} + n^{\frac{2 -\ell}{2 \ell}} \bigr)\log m \label{ErrorBound1} 
\end{align}
holds with probability at least $1-4(\log n)^{-1}$. We make a few remarks here. Firstly, to the best of our knowledge, this is the first consistency result for an M-estimator whose loss function involves a multivariate 2-Wasserstein distance term. \citet{bernton2019parameter} studied the convergence rate and asymptotic distribution of a minimum Wasserstein estimator, but their result is restricted to 1-Wasserstein distance in the univariate setting, for which explicit characterization of the optimal transport is available.  
In our setting, the traditional M-estimator/Z-estimator argument \citep[Chapter 3.2-3.3]{vanderVaartWellner1996} that derives consistency and rate of convergence of an M-estimator by analyzing the curvature of the loss function is infeasible. Instead, our proof relies on several new lemmas that reveal important properties of the Wasserstein product. 


To briefly sketch the proof of Theorem \ref{thm: Consistency}, we first introduce the following lemmas.

\begin{lemma}\label{le: wipLowerBound}
Let $Z$ and $\varepsilon$ be independent random vectors in $\mathbb{R}^d$ and $U\sim \mathcal{N}(0,I_d)$. If $P^{\varepsilon}$ and $P^Z$ are atomless probability measures with finite-second moments, then
\[
        \wip{Z+\varepsilon}{U}^2 \geq \wip{Z}{U}^2  + \wip{ \varepsilon}{U}^2. 
\]
\end{lemma}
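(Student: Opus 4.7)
The plan is to exploit the rotational stability of the standard Gaussian: if $U_1, U_2 \sim \mathcal{N}(0, I_d)$ are independent, then $V_\lambda := \lambda U_1 + \sqrt{1-\lambda^2}\,U_2 \sim \mathcal{N}(0, I_d)$ for every $\lambda \in [0,1]$. This lets me align one piece of $U$ with the optimal transport from $U$ to $Z$ and an orthogonal piece with the optimal transport from $U$ to $\varepsilon$, yielding a coupling of $P^U$ and $P^{Z+\varepsilon}$ strong enough to give a Pythagorean-type bound. A construction of this kind is needed because the ``obvious'' route — optimally coupling $Z+\varepsilon$ to $U$ and splitting the inner product by linearity — only yields the reverse (and essentially trivial) inequality $\wip{Z+\varepsilon}{U} \leq \wip{Z}{U} + \wip{\varepsilon}{U}$.

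Concretely, I first invoke Brenier's theorem: since $U_1$ is absolutely continuous with finite second moments, there exist Borel maps $\Phi_Z, \Phi_\varepsilon : \mathbb{R}^d \to \mathbb{R}^d$ such that $\Phi_Z(U_1) \sim Z$, $\Phi_\varepsilon(U_2) \sim \varepsilon$, $\E\langle U_1, \Phi_Z(U_1)\rangle = \wip{Z}{U}$ and $\E\langle U_2, \Phi_\varepsilon(U_2)\rangle = \wip{\varepsilon}{U}$. Because $U_1 \perp U_2$ and $Z \perp \varepsilon$, the vector $W := \Phi_Z(U_1) + \Phi_\varepsilon(U_2)$ has the distribution of $Z+\varepsilon$, so $(V_\lambda, W)$ is a valid coupling of $P^U$ and $P^{Z+\varepsilon}$, whence
\[
\wip{Z+\varepsilon}{U} \;\geq\; \E\langle V_\lambda, W\rangle.
\]
Expanding the right-hand side, the two cross terms $\E\langle U_1, \Phi_\varepsilon(U_2)\rangle$ and $\E\langle U_2, \Phi_Z(U_1)\rangle$ vanish by independence together with $\E U_1 = \E U_2 = 0$, leaving $\lambda\,\wip{Z}{U} + \sqrt{1-\lambda^2}\,\wip{\varepsilon}{U}$.

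It remains to optimize over $\lambda \in [0,1]$. Since both $\wip{Z}{U}$ and $\wip{\varepsilon}{U}$ are non-negative (the product coupling already achieves value zero against the centered $U$), an elementary one-variable calculation shows the maximum is attained at $\lambda^* = \wip{Z}{U}/\sqrt{\wip{Z}{U}^2 + \wip{\varepsilon}{U}^2}$ with value $\sqrt{\wip{Z}{U}^2 + \wip{\varepsilon}{U}^2}$; squaring both sides of the resulting inequality yields the claim. The main conceptual hurdle is locating the correct coupling; once the decomposition of $\mathcal{N}(0,I_d)$ into two rescaled independent Gaussian pieces is in hand, every subsequent step is linearity of expectation plus a one-variable optimization. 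This also explains why a Gaussian reference is essential: a generic reference distribution does not decompose additively into independent copies of itself, so the argument does not extend to arbitrary $U$ without modification.
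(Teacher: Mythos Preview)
Your proof is correct and follows essentially the same approach as the paper: both arguments exploit the rotational stability of the Gaussian to build a one-parameter family of couplings between $P^{Z+\varepsilon}$ and $P^U$, then optimize the interpolation parameter to obtain the Pythagorean bound. The only cosmetic difference is the direction of the Brenier maps---the paper transports $Z\to U$ and $\varepsilon\to U$ and combines the images, whereas you transport $U_1\to Z$ and $U_2\to\varepsilon$ and combine the preimages; your variant has the minor advantage that it does not actually use the atomlessness of $P^Z$ and $P^\varepsilon$.
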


This lemma is proved by constructing a sequence of couplings of the triple $(Z, \varepsilon, U)$ via the Slepian smart path interpolation \citep[see e.g.][Chapter 7.2.1]{vershynin2018high}. The best induced coupling of $(Z+ \varepsilon, U)$ provides the desired lower bound of $\wip{Z+\varepsilon}{U}$. See Appendix \ref{apx: wiplemma} for the proof. We remark that the lower bound in Lemma~\ref{le: wipLowerBound}
    is sharp, as can be seen from Lemma~\ref{le: wipLowerBoundSharp} in Appendix \ref{apx: Auxillary}. 

\begin{lemma}\label{le: innerprodWassControl}
    Let $X_1$, $X_2$, $Y_1$, $Y_2$ be random elements taking values in a common normed space $\mathcal{X}$. We have
    \[
        \bigl|\wip{X_1}{X_2} - \wip{Y_1}{Y_2}\bigr| \leq   \bigl(\E\|Y_2\|^2\bigr)^{1/2} \mathcal{W}_2(P^{X_1}, P^{Y_1})  + \bigl(\E\|X_1\|^2\bigr)^{1/2}\mathcal{W}_2(P^{X_2}, P^{Y_2}) .
    \]
\end{lemma}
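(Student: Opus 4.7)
The plan is to reduce the inequality to the following one-sided perturbation bound for the Wasserstein product: for any random elements $X, Y, Z$ in $\mathcal{X}$ with finite second moments,
\[
\bigl|\wip{X}{Z} - \wip{Y}{Z}\bigr| \leq \bigl(\E\|Z\|^2\bigr)^{1/2}\, \mathcal{W}_2(P^X, P^Y).
\]
Once this claim is in hand, the full lemma follows immediately from the triangle inequality
\[
\bigl|\wip{X_1}{X_2} - \wip{Y_1}{Y_2}\bigr| \leq \bigl|\wip{X_1}{X_2} - \wip{X_1}{Y_2}\bigr| + \bigl|\wip{X_1}{Y_2} - \wip{Y_1}{Y_2}\bigr|,
\]
applying the claim (with $Z = X_1$, $X = X_2$, $Y = Y_2$) to the first term and the claim (with $Z = Y_2$, $X = X_1$, $Y = Y_1$) to the second, using the symmetry of $\wip{\cdot}{\cdot}$ in its two arguments.

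To prove the claim, I would fix $\epsilon > 0$ and choose an $\epsilon$-almost optimal coupling $\gamma^\epsilon \in \mathcal{C}(P^X, P^Z)$ satisfying $\int \langle x, z\rangle\, d\gamma^\epsilon(x,z) \geq \wip{X}{Z} - \epsilon$, together with an optimal $\mathcal{W}_2$-coupling $\pi \in \mathcal{C}(P^X, P^Y)$. Invoking the gluing lemma yields a probability measure $\mu$ on $\mathcal{X}^3$ whose $(X,Z)$-marginal is $\gamma^\epsilon$ and whose $(X,Y)$-marginal is $\pi$; its induced $(Y,Z)$-marginal $\tilde\gamma$ then lies in $\mathcal{C}(P^Y, P^Z)$. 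Splitting $y = x + (y - x)$ inside the inner product gives
\begin{align*}
\wip{Y}{Z} &\geq \int \langle y, z\rangle\, d\mu(x,y,z) = \int \langle x, z\rangle\, d\gamma^\epsilon(x,z) + \int \langle y - x, z\rangle\, d\mu(x,y,z) \\
&\geq \wip{X}{Z} - \epsilon - \bigl(\E\|Z\|^2\bigr)^{1/2} \bigl(\E\|X - Y\|^2\bigr)^{1/2} \\
&= \wip{X}{Z} - \epsilon - \bigl(\E\|Z\|^2\bigr)^{1/2}\, \mathcal{W}_2(P^X, P^Y),
\end{align*}
where the middle inequality uses Cauchy--Schwarz on the integrand followed by H\"older, and the final equality uses the $\mathcal{W}_2$-optimality of $\pi$. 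Letting $\epsilon \downarrow 0$ proves one direction of the claim, and swapping the roles of $X$ and $Y$ yields the other.

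The main obstacle is essentially bookkeeping rather than a deep estimate: one needs to ensure that the gluing construction and the existence of (near-)optimal couplings are valid in $\mathcal{X}$, which reduces to mild regularity (Polish inner-product space) so that disintegration is available and the $\mathcal{W}_2$-optimal plan exists. Once that is taken care of, the entire bound is just a clean Cauchy--Schwarz estimate along a suitably glued transport plan, with no empirical-process or duality machinery required.
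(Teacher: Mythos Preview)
Your argument is correct and shares the same core mechanism as the paper's proof: glue together appropriate (near-)optimal couplings and apply Cauchy--Schwarz to the cross term $\langle y-x, z\rangle$. The paper, however, organizes this differently. Instead of passing through the intermediate quantity $\wip{X_1}{Y_2}$ and invoking the standard three-variable gluing lemma twice, it builds a single four-way coupling $\eta$ on $\mathcal{X}_1\times\mathcal{X}_2\times\mathcal{Y}_1\times\mathcal{Y}_2$ whose $(X_1,X_2)$-, $(X_1,Y_1)$-, and $(X_2,Y_2)$-marginals are simultaneously optimal; this requires proving a separate ``acyclic-graph gluing'' lemma (their Lemma~\ref{le: pmcomp}). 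Your two-step decomposition sidesteps that auxiliary result entirely and uses only the textbook gluing lemma, which is a genuine simplification. You are also slightly more careful than the paper in using $\epsilon$-almost-optimal couplings for the Wasserstein product (the paper silently assumes optimal couplings exist), though both arguments rely on the existence of an optimal $\mathcal{W}_2$-coupling and hence implicitly on a Polish-space setting, as you note.
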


This lemma links $\mathcal{W}_2(P^{X_1}, P^{X_2})$, $\mathcal{W}_2(P^{Y_1}, P^{Y_2})$ with $\mathcal{W}_2(P^{X_1}, P^{Y_1})$, $\mathcal{W}_2(P^{X_2}, P^{Y_2})$. This is useful when transforming a two-sample problem into two one-sample problems. Please refer to Appendix \ref{apx: innerprodWass} for the proof.

\begin{figure}
  \centering
  \subfigure[The upper and lower bound constructed in the Proof of Theorem \ref{thm: Consistency}.]{
  \includegraphics[scale = 0.3]{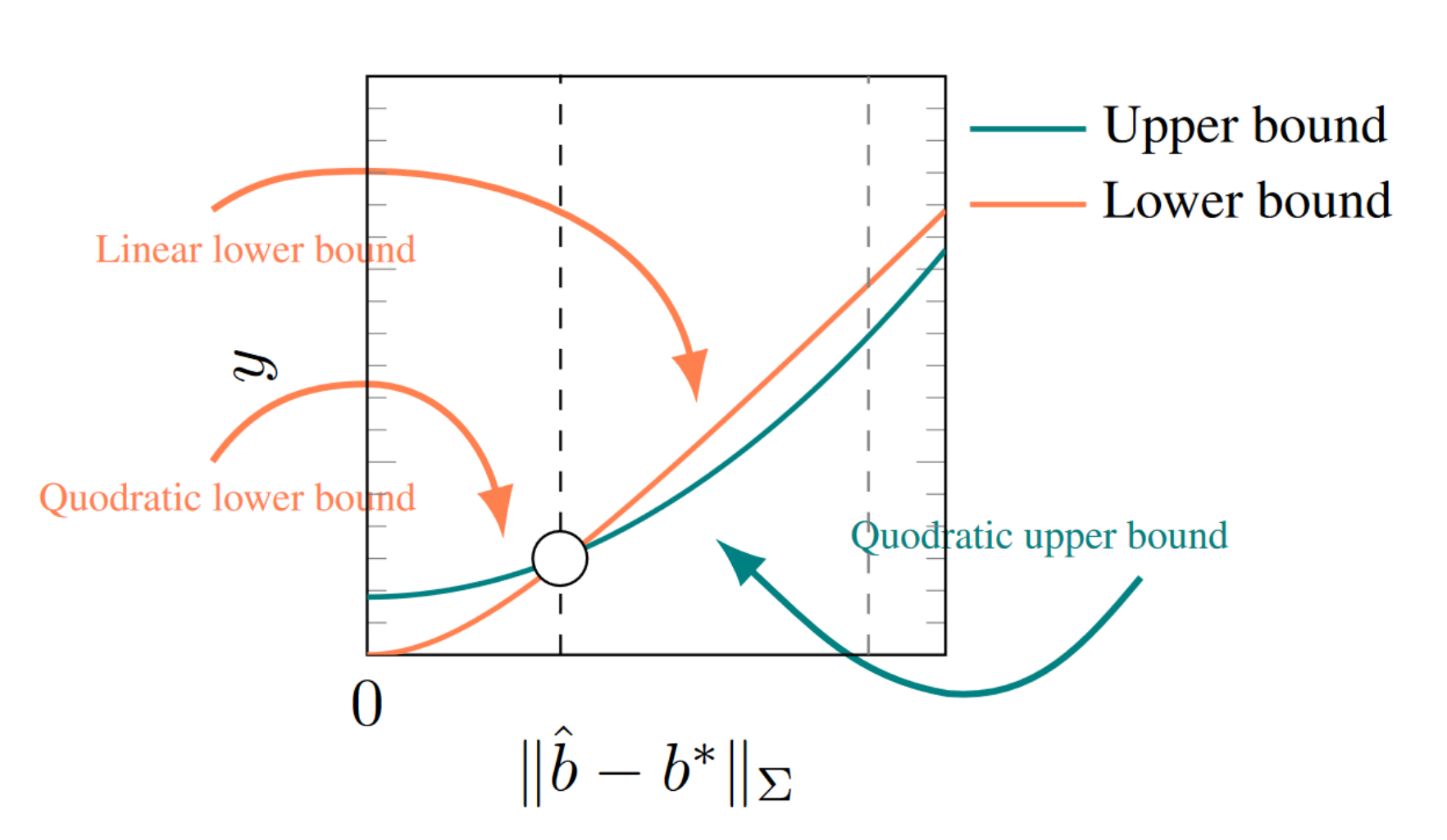}
  \label{fig: proofthm5}
  }\hspace{0.3cm}
    \subfigure[The upper and lower bound constructed in the proof of Theorem \ref{thm: FasterRate}.]{
    \includegraphics[scale = 0.3]{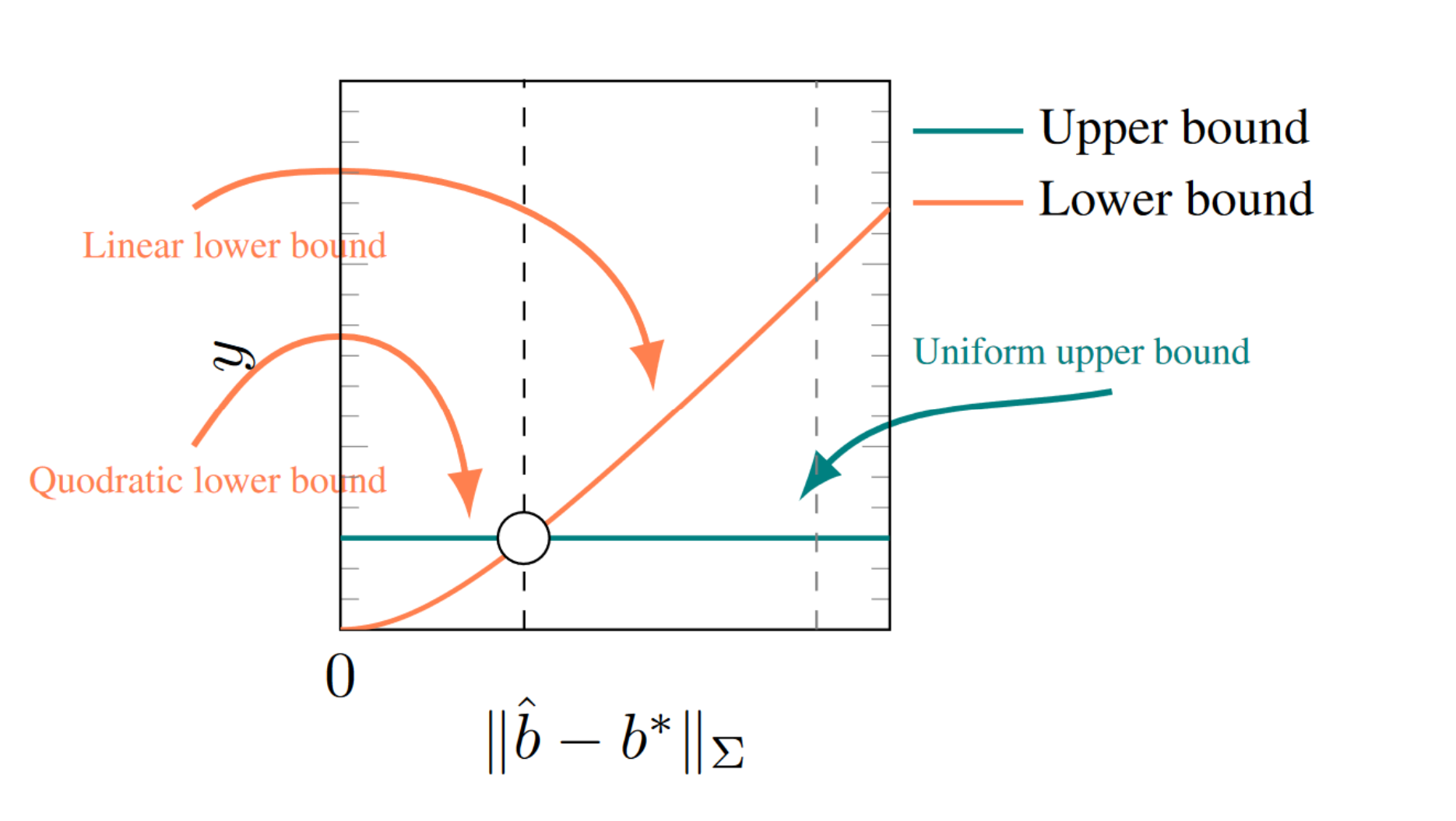}
  \label{fig: proofthm8}
    }
    \caption{Illustration of proofs.}
\end{figure}

\begin{proofsketch}[\ref{thm: Consistency}]
    We start with the basic inequality:
\begin{align}
    \mathcal{L}(\hat{{b}}) - \mathcal{L}({{b}^*}) \leq \mathcal{L}(\hat{{b}}) - \mathcal{L}_{n,m}(\hat{{b}}) + \mathcal{L}_{n,m}({{b}^*}) - \mathcal{L}({{b}^*})  \label{ineq: basic}.
\end{align}
The proof strategy involves establishing a lower bound for the left-hand side of (\ref{ineq: basic}) with respect to $\|\hat b - b^*\|_{\Sigma}$ and an upper bound for the right-hand side of (\ref{ineq: basic}) in terms of $\|\hat b - b^*\|_{\Sigma}$. Then by solving the resulting inequality, we can derive an expression bounding $\|\hat b - b^*\|_{\Sigma}$. 

For a lower bound of the left-hand side of (\ref{ineq: basic}), since for any $b \in \mathbb{R}^{d\times p}$, we have $\mathcal{L}({b}) - \mathcal{L}({{b}^*}) = \wip{{(b^*-b)X + \varepsilon}}{{U}} -\wip{{\varepsilon}}{{U}}$, by applying Lemma \ref{le: wipLowerBound} and the explicit form for $\wip{(b^* - b)X}{U}$ we can show that 
\begin{align}
    \mathcal{L}(b) - \mathcal{L}(b^*) \geq \sqrt{r^2 + \|b^* -b\|_{\Sigma}^2} -r, \label{ineq: Lowerbound} 
\end{align}
where $r := \wip{\varepsilon}{U}$ is a constant. This lower bound grows quadratically in $\|\hat b - b^*\|_{\Sigma}$ when $\|\hat b - b^*\|_{\Sigma}$ is close to zero and linearly when $\|\hat b - b^*\|_{\Sigma}$ is large (see Figure \ref{fig: proofthm5} for an illustration).

To upper bound the right-hand side of (\ref{ineq: basic}), by applying Lemma \ref{le: innerprodWassControl} we have for each $b \in \mathbb{R}^{d \times p}$,
\begin{align}
    |\mathcal{L}(b) - \mathcal{L}_{n,m}(b)|\leq \biggl(\frac{1}{m}\sum_{i=1}^m&\|U_i\|^2\biggr)^{1/2} \mathcal{W}_2(P^{Y -bX}, P_n^{Y -bX}) \notag \\ 
    &+ (\E\|Y-bX\|^2)^{1/2}\mathcal{W}_2(P^{U}, P_m^{U})  \label{ineq: Upperbound}.
\end{align}
Here $\mathcal{W}_2(P^{Y -bX}, P_n^{Y -bX})$ and $\mathcal{W}_2(P^{U}, P_m^{U})$ are one-sample empirical Wasserstein distance, and the state-of-art convergence rate can be applied \citep[see e.g.][]{fournier2015rate} (the actual proof is more involved in the sense that we need to establish the same result uniformly over $b$). Then a direct calculation on the right-hand side of (\ref{ineq: Upperbound}) leads to a quadratic upper bound in terms of $\|b^* - b\|_{\Sigma}$. The result follows by combining the upper bound with the lower bound (\ref{ineq: Lowerbound}). See Appendix \ref{apx: ConsisThm} for a complete proof.  
\end{proofsketch}

Before we state a faster convergence rate result, we first introduce the following assumptions.
\begin{assumption}\label{assumption: sw}
    For some $\sigma_1, \sigma_2 >0$ and $\alpha, \beta \in (0, 2]$, it holds that the distribution of $\Sigma^{-1/2}X$ is  $\sw{\sigma_1}{\alpha}$ and $P^\varepsilon$ is $\sw{\sigma_2}{\beta}$, in the sense that  
    \begin{align}
    \E \exp\biggl\{{\frac{1}{2}(\|\Sigma^{-1/2}X\|/\sigma_1})^{\alpha}\biggr\} \leq 2 \quad \text{and} \quad \E \exp\biggl\{{\frac{1}{2}(\|\varepsilon\|/\sigma_2})^{\beta}\biggr\} \leq 2 \label{def: sub-weibull}
\end{align}
\end{assumption}
\begin{assumption}\label{assumption: anticoncen}
    Suppose $P^\varepsilon \in \mathcal{P}_{ac}(\mathbb{R}^d)$. For some $\gamma_1, \gamma_2>0$, the density function of $\varepsilon$, write as $f_{\varepsilon}$, satisfies the following anti-concentration property
    \begin{align}
        f_{\varepsilon}(e) \geq \gamma_1 \exp{(-\gamma_2\|e\|^2)}, \quad \text{for $\|e\| \geq 1$} \label{ineq: AntiConcentrationofvarepsilon}.
    \end{align}
\end{assumption}
On the one hand, Assumption \ref{assumption: anticoncen} immediately implies the following anti-concentration bound
\begin{align}
    \Prob(\|\varepsilon\| \geq r) \geq \frac{\pi^{d/2}\bigl((r + 1)^d - r^d\bigr)}{\Gamma(\frac{d}{2} + 1)} \gamma_1 \exp(-2\gamma_2 r^2 - 2\gamma_2), \quad \text{for $r \geq 1$}\notag.
\end{align}
This indicates that the random noise $\varepsilon$ possesses a heavier tail than the sub-gaussian tail outside the unit ball. On the other hand, by proposition \ref{prop: swequivalent}\ref{prop: sw1}, the sub-Weibull assumption implies that $\Prob(\|\varepsilon\|\geq r) \leq 2 e^{-\frac{1}{2}(r/\sigma_2 )^{\beta}}$. The anti-concentration condition in (\ref{ineq: AntiConcentrationofvarepsilon}) is a relaxation of the so-called $(\gamma_1, \gamma_2)$-regularity defined in \cite{polyanskiy2016wasserstein}. The merit of employing this relaxation becomes apparent when examining Lemma \ref{le: AnticoncentrationforSumofTwoRVs}, where it is demonstrated that the convolution of two independent probability densities adhering to (\ref{ineq: AntiConcentrationofvarepsilon}) continues to satisfy the anti-concentration inequality. In contrast, the convolution of two independent regular densities may not be regular. 

Equipped with these assumptions, we are ready to state an improved convergence rate. 
\begin{theorem}\label{thm: FasterRate}
    Under the same setup of Theorem \ref{thm: Consistency} and suppose that Assumptions~\ref{assumption: sw} and~\ref{assumption: anticoncen} are satisfied. For $m,n$ large enough such that (\ref{Condition: nmSufficientlyLarge}) is satisfied, there exists some constant $M>0$ depending only on $d, \alpha, \beta, \sigma_1, \sigma_2, \gamma_1, \gamma_2$ such that with probability at least $1 - 33(\log n)^{-1}$, we have
    \begin{equation}
        \|b^* - \hat b\|_{\Sigma}^2 \leq M  \bigl((p/n)^{1/2}+n^{-2/d}\bigr) (\log m)^{\frac{8}{2\wedge \alpha\wedge\beta}}. \label{ErrorBound2} 
    \end{equation}
\end{theorem}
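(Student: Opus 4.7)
\begin{proofsketch}[\ref{thm: FasterRate}]
The plan is to retain the basic-inequality framework of Theorem \ref{thm: Consistency},
\[
\mathcal{L}(\hat b) - \mathcal{L}(b^*) \leq \bigl(\mathcal{L}(\hat b) - \mathcal{L}_{n,m}(\hat b)\bigr) + \bigl(\mathcal{L}_{n,m}(b^*) - \mathcal{L}(b^*)\bigr),
\]
but to sharpen both sides using the new tail and anti-concentration conditions. The overall strategy, illustrated in Figure \ref{fig: proofthm8}, is to produce a lower bound of the left-hand side that is quadratic in $s := \|\hat b - b^*\|_\Sigma$ on a ball large enough to contain $\hat b$ with high probability, and an upper bound of the right-hand side of the form $A s + B$ with $A \lesssim (p/n)^{1/4}$ and $B \lesssim n^{-2/d}$, up to poly-logarithmic factors. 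Combining $c s^2 \leq A s + B$ will then give $s^2 \lesssim A^2 + B \lesssim (p/n)^{1/2} + n^{-2/d}$, yielding \eqref{ErrorBound2}.

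For the lower bound, the inequality $\mathcal{L}(b) - \mathcal{L}(b^*) \geq \sqrt{r^2 + \|b^*-b\|_\Sigma^2} - r$ from Theorem \ref{thm: Consistency} remains valid but is only quadratic in an $O(r)$-neighborhood of $b^*$. The plan is to extend the quadratic regime to a ball of poly-logarithmic radius using Assumption \ref{assumption: anticoncen}: by Lemma \ref{le: AnticoncentrationforSumofTwoRVs}, the density of $\varepsilon + (b^* - b)X$ inherits the Gaussian-type lower bound in \eqref{ineq: AntiConcentrationofvarepsilon}. Combined with a refinement of the Slepian-type coupling construction used to prove Lemma \ref{le: wipLowerBound}, this anti-concentration allows us to show $\mathcal{L}(b) - \mathcal{L}(b^*) \gtrsim \|b^*-b\|_\Sigma^2$ uniformly on a ball whose radius depends only polynomially on $\log n$ and on $\alpha, \beta, \gamma_1, \gamma_2, \sigma_1, \sigma_2$.

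For the upper bound, Lemma \ref{le: innerprodWassControl} gives, for each $b$,
\[
\bigl|\mathcal{L}(b) - \mathcal{L}_{n,m}(b)\bigr| \leq \sqrt{\tfrac{1}{m}\textstyle\sum_i \|U_i\|^2}\,\mathcal{W}_2(P^{Y-bX}, P_n^{Y-bX}) + \sqrt{\E\|Y - bX\|^2}\,\mathcal{W}_2(P^U, P_m^U).
\]
Under Assumption \ref{assumption: sw}, $Y - bX$ is sub-Weibull with parameters controlled by $\sigma_1, \sigma_2$ and $\|b^* - b\|_\Sigma$; the sub-Weibull refinement of the empirical-Wasserstein rates of \citet{fournier2015rate}, obtained after truncating at radius $(\log n)^{1/(\alpha \wedge \beta)}$, then gives both Wasserstein terms of order $n^{-1/d}$ up to logarithmic factors, contributing the $B \lesssim n^{-2/d}$ ingredient. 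To isolate the $(p/n)^{1/2}$ contribution I would look at the centred fluctuation $\bigl(\mathcal{L}_{n,m}(b^*) - \mathcal{L}_{n,m}(b)\bigr) - \bigl(\mathcal{L}(b^*) - \mathcal{L}(b)\bigr)$, couple the triples $(X_i, \varepsilon_i, U_i)$ through the optimal map between $P_n^\varepsilon$ and $P_m^U$, and reduce this differential fluctuation to an empirical process over the $p$-dimensional linear class $\{X \mapsto (b-b^*)X\}$. A Hanson--Wright-type sub-Weibull concentration combined with a chaining/peeling argument over $b$ then produces the coefficient $A \lesssim (p/n)^{1/4}$.

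The hardest step is the quadratic extension of the lower bound: the linear regime of $\sqrt{r^2+s^2}-r$ for $s \gg r$ must be ruled out via anti-concentration, which crucially relies on the convolution-stability property in Lemma \ref{le: AnticoncentrationforSumofTwoRVs}; as the authors remark after stating Assumption \ref{assumption: anticoncen}, the $(\gamma_1, \gamma_2)$-regularity of \citet{polyanskiy2016wasserstein} does not suffice here. A secondary technical difficulty is obtaining the sub-Weibull empirical-Wasserstein rate uniformly in $b$, which is handled by combining the truncation argument with a covering of the relevant ball in $b$-space.
\end{proofsketch}
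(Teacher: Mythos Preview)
Your proposal misidentifies where Assumption~\ref{assumption: anticoncen} enters and, more seriously, your upper-bound route cannot reach the claimed $n^{-2/d}$ rate.

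On the lower bound: no extension of the quadratic regime is needed, and anti-concentration plays no role here. Condition~\eqref{Condition: nmSufficientlyLarge} together with Theorem~\ref{thm: Consistency} already localizes $\hat b$ to the unit ball $\mathcal{B}=\{b:\|b-b^*\|_\Sigma<1\}$ with high probability, and on $\mathcal{B}$ the bound $\sqrt{r^2+s^2}-r\ge\tfrac{1}{2}(1+r^2)^{-1/2}s^2$ from Lemma~\ref{le: eleinequality} is already quadratic. The paper reuses \eqref{ineq: lb} verbatim.

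On the upper bound: reapplying Lemma~\ref{le: innerprodWassControl} bounds $|\mathcal{L}(b)-\mathcal{L}_{n,m}(b)|$ by a constant times $\mathcal{W}_2(P^{Y-bX},P_n^{Y-bX})$, which under sub-Weibull tails is of order $n^{-1/d}$, not $n^{-2/d}$; your assertion that ``terms of order $n^{-1/d}$ \ldots contribut[e] $B\lesssim n^{-2/d}$'' is unsupported, and plugging $n^{-1/d}$ into $cs^2\le As+B$ yields only $s^2\lesssim n^{-1/d}$. The paper abandons Lemma~\ref{le: innerprodWassControl} entirely and instead bounds $\sup_{b\in\mathcal{B}}\bigl|\mathcal{W}_2^2(P^{A(b)},P^U)-\mathcal{W}_2^2(P_n^{A(b)},P_m^U)\bigr|$ directly through the Kantorovich dual: the empirical dual potentials $(\varphi_{b;n,m},\varphi_{b;n,m}^*)$ are extended to $\mathbb{R}^d$ and shown to be piecewise Lipschitz with constants uniform in $b$ (Propositions~\ref{le: ExtProperties}--\ref{prop: PiecewiseLipvarphi}). \emph{This} is where Assumption~\ref{assumption: anticoncen} is used --- the density lower bound feeds, via \citet[Theorem~11]{manole2021sharp}, into the subdifferential bound that controls those Lipschitz constants. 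The difference of squared Wasserstein distances then reduces to an IPM over Lipschitz convex functions composed with linear maps of operator norm at most one; Dudley chaining (Lemma~\ref{le: RademacherComplexityofR_n}) gives $n^{-2/d}$ from the Bronshtein entropy $\delta^{-d/2}$ of bounded convex Lipschitz functions and $(p/n)^{1/2}$ from the $(1+2/\delta)^{dp}$ covering of the linear class. As for your differential-fluctuation route, note that the paper explicitly remarks (after the theorem statement) that controlling $\frac{\partial}{\partial b}(\mathcal{L}(b)-\mathcal{L}_{n,m}(b))$ uniformly was beyond reach with current tools; the coupling-and-Hanson--Wright sketch you give does not explain how the Wasserstein product, which involves an optimization over couplings that changes with $b$, linearizes into an empirical process over $\{X\mapsto (b-b^*)X\}$.
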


When $d >4$, up to a factor of the logarithm, the empirical Wasserstein distance estimation error $n^{-2/d}$ is the dominant term. This is derived from a uniform empirical Wasserstein distance control (see~\eqref{ineq: basicinequality_uniform} and Proposition \ref{Prop:W2distdiff}), and its minimax optimality has been established in \cite{singh2018minimax}. Compared to (\ref{ErrorBound1}), this improved bound in~\eqref{ErrorBound2} removes the dependence on $p$ in the exponent. Moreover, unlike the convergence rate result established for the projected Wasserstein distance in \citet{wang2021two, pmlr-v151-wang22f}, our argument does not require the distribution of $\varepsilon$ to have compact support. When $d \leq 4$, the parametric rate $(p/n)^{1/2}$ dominates the estimation error. However, this does not translate into a the root-$n$ consistency even when $d = 1$. We conjecture that this is likely due to an artifact of our proof. Specifically, due to a lack of effective tools to analyze the curvation of the loss function that incorporates the Wasserstein distance, we were unable to obtain concentration results for $\frac{\partial}{\partial b} (\mathcal{L}(b)- \mathcal{L}_{n,m}(b))$ uniformly over $b$ in a similar way that we have done for $\mathcal{L}(b)- \mathcal{L}_{n,m}(b)$. Exploration along this direction remains an area for future work. We briefly sketch the proof below. See Appendix \ref{apx: FasterRate} for a complete proof.


\begin{proofsketch}[\ref{thm: FasterRate}]
Assume the setting of Theomem \ref{thm: Consistency}, error bound (\ref{ErrorBound1}) implies that on a high probability event, $\hat b$ will lie in a bounded ball centered at $b^*$, denoted by $\mathcal{B}$. Thus the basic inequality (\ref{ineq: basic}) indicates the following uniform bound 
\begin{align}
    \mathcal{L}(\hat b) - \mathcal{L}(b^*) &\leq 2 \sup_{b \in \mathcal{B}}|\mathcal{L}(b) - \mathcal{L}_{n,m}(b)| \notag \\ 
    &\leq \Bigl|\frac{1}{m}\sum_{i = 1}^m \|U_i\|^2 - \E\|U\|^2 \Bigr| + \sup_{b \in \mathcal{B}}\Bigl|\frac{1}{n}\sum_{i = 1}^n \|Y_i -bX_i\|^2 - \E \|Y-bX\|^2 \Bigr| \notag \\
    &\quad+ \sup_{b \in \mathcal{B}}\Bigl|\mathcal{W}_2^2(P^{Y-bX}, P^U) - \mathcal{W}_2^2(P_n^{Y-bX}, P_m^U) \Bigr| \label{ineq: basicinequality_uniform}.
\end{align}
Utilizing the same lower bound for the left-hand side as in (\ref{ineq: Lowerbound}), it remains to derive an upper bound for the right-hand side of the above inequality. While the initial two terms of (\ref{ineq: basicinequality_uniform}) can be effectively controlled through the application of statistical concentration arguments, as elucidated in Lemma \ref{le: ULLN}, achieving control over the last term demands much more effort. Motivated by the duality argument presented in \citet[Theorem 12]{manole2021sharp}, we establish a non-asymptotic \textit{uniform} error bound for the empirical 2-Wasserstein distance (Proposition \ref{Prop:W2distdiff} in Appendix \ref{apx: FasterRate}; see also Figure \ref{fig: proofthm8} for an illustration), which forms the key ingredient of the proof.
\end{proofsketch}

\section{Numerical experiments}\label{sec: imp}

In this section, we compare the empirical performance of MCQR  with other robust regression estimators. The MCQR estimator is obtained by solving the linear programming problem in Section~\ref{Sec:LP}. The competitors used in the simulation studies include the ordinary least squares estimator (LS), the spatial quantile regression (SpQR) with zero quantile level \citep{chaudhuri1996geometric}, and coordinate-wise CQR (CoorCQR), i.e. independently applying CQR to each component of the response variable. We refer readers to Appendix \ref{Apx: SimulationDetails} for more details about SpQR. 

\begin{figure}[htbp!]
\centering
\subfigure[Gaussian noise]{\includegraphics[width=0.45\textwidth]{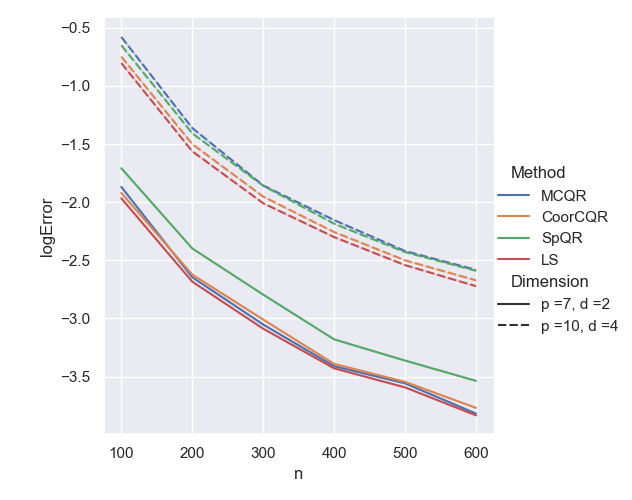}\label{fig: Gauss}}
\subfigure[multivariate $t_2$ noise]{\includegraphics[width=0.45\textwidth]{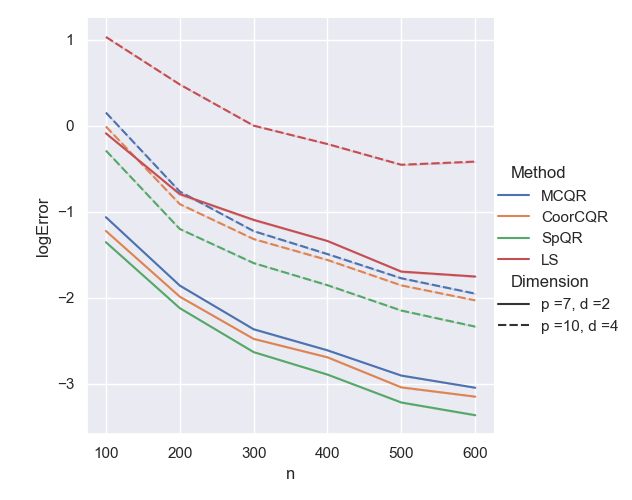}\label{fig: t_2}}
\subfigure[Pareto copula noise]{\includegraphics[width=0.45\textwidth]{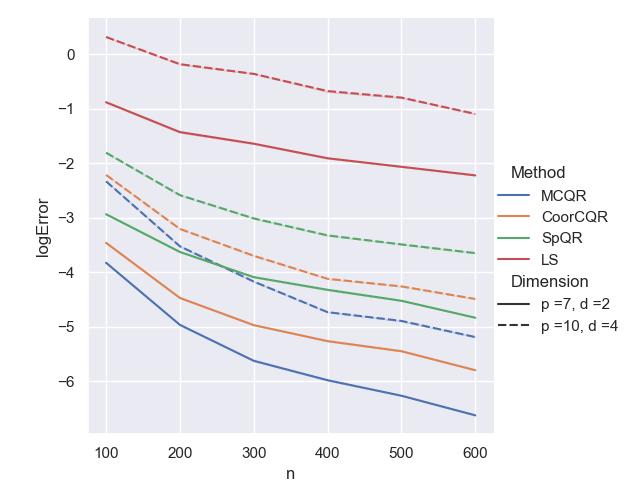}\label{fig: Pareto}}
\subfigure[Banana-shaped noise]{\includegraphics[width=0.45\textwidth]{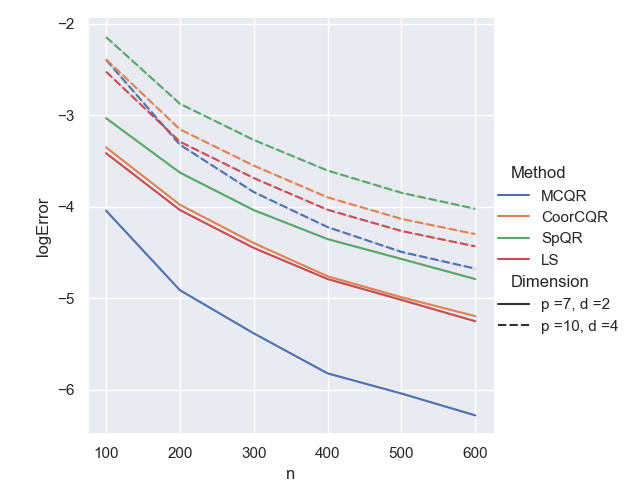}\label{fig: Banana}}

\caption{Logarithmic average loss, measured in matrix Mahalanobis norm, of the regression coefficient estimated by MCQR, CoorCQR, SpQR and LS for data generated according to the mechanism described in Section~\ref{sec: imp} for various sample size $n$, covariate dimension $p$ and response dimension $d$ and four different noise distributions (panels (a) to (d)).}

\label{fig: Experiment}
\end{figure}


In each experiment, we draw i.i.d.\ data $(X_1,Y_1),\ldots,(X_n,Y_n)$ according to model~\eqref{mlm}, where the regression coefficients $b^* \in \mathbb{R}^{d \times p}$ has independent $\mathcal{N}(5, 5)$ entries and is kept fixed for all repetitions. Covariates $X_i \in \mathbb{R}^p$, $i=1,\ldots,n$, are drawn from $N(0,\Sigma)$ with a Toeplitz covariance matrix $\Sigma =(2^{-|i-j|})_{i,j} \in \mathbb{R}^{p \times p}$. The noise $\varepsilon$ is generated from one of the following distributions:
\begin{enumerate}[label=(1\alph*), noitemsep]
    \item $\varepsilon\sim \mathcal{N}(0,I_d)$
    \item $\varepsilon \sim t_2(0, I_d)$ follows a multivariate $t_2$ distribution
    \item $\varepsilon$ has each marginal distributed with $\mathrm{Pareto}(-2, 2, 1)$ \footnote{the Pareto distribution $\mathrm{Pareto}(k, \alpha, s)$ has density function $f(x)  \varpropto \frac{\alpha s^{\alpha + 1}}{(x - k)^{\alpha + 1}}$ for all $x\geq 1+k$, with shape parameter $\alpha >0$, location parameter $k \in \mathbb{R}$ and scale parameter $s > 0$. Here $\mathrm{Pareto}(-2, 2, 1)$ has mean 0.} and the same copula as $\mathcal{N}(0,\Sigma')$, where $\Sigma' = (0.9^{|i-j|})_{i,j}\in\mathbb{R}^{d\times d}$
    \item $\varepsilon$ follows a centered Banana-shaped distribution, i.e.\ $\varepsilon_i \stackrel{d}{=}(B_{d-1}, \|B_{d-1}\|^2 - \frac{2}{d+2}) + 0.3 B_d$, where $B_d$ is uniformly distributed in the unit ball in $\mathbb{R}^d$ 
\end{enumerate}

Figure~\ref{fig: Experiment} reports the average matrix Mahalanobis norm error (estimated over $100$ Monte Carlo repetitions) of MCQR, LS, SpQR and CoorCQR over the four noise distributions mentioned above for $n\in\{100,200,\ldots,600\}$ and $(d,p)\in\{(2,7), (4,10)\}$. We see that MCQR has done well over all settings considered here. In contrast, LS estimator performs the best under Gaussian noise but has poor performance under heavy-tailed noise or noise with non-convex support. CoorCQR and SpQR have relatively good performance in panels (a) and (b) when the noise is spherically symmetric but their performance deteriorated when the noise exhibits strong cross-sectional dependence in panels (c) and (d). 

While our theoretical results have mostly concerned with heavy-tailed noise, we also investigate the empirical performance of MCQR in the presence of outlier contamination. Here, we consider two cases of $\epsilon$-contaminated noise, for some $\epsilon \in (0, 1)$:
\begin{enumerate}[label=(2\alph*), noitemsep]
    \item $\varepsilon \sim (1 - \epsilon) P_1+ \epsilon P_2$; here $P_1$ is a Pareto copula with $\mathrm{Pareto}(-\frac{10}{9}, 10, 1)$ marginals and copula generated by $\mathcal{N}(0, \Sigma')$ as in case (1c) and $P_2$ is a heavier-tailed location-shifted Pareto copula with marginals distributed as $\mathrm{Pareto}(10, 2, 10)$.
    \item $\varepsilon \sim (1-\epsilon)\mathcal{N}(0, I_d) + \epsilon \mathcal{N}(100, I_d)$
\end{enumerate}

\begin{figure}[htbp!]
\centering
\subfigure[Pareto contamination]{\includegraphics[width=0.45\textwidth]{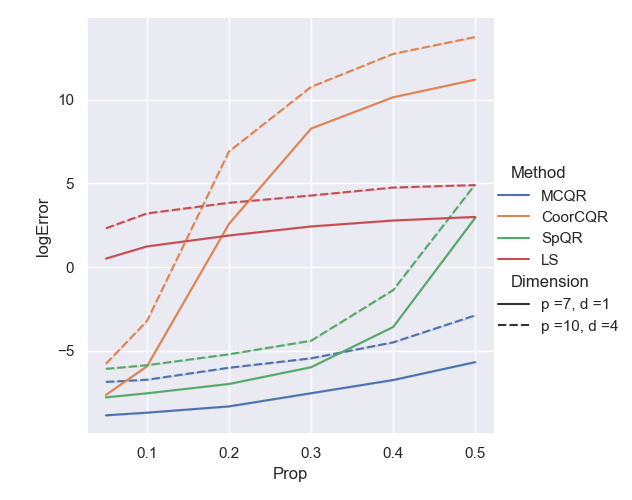}\label{fig: ParetoContam}}
\subfigure[Gaussian contamination]{\includegraphics[width=0.45\textwidth]{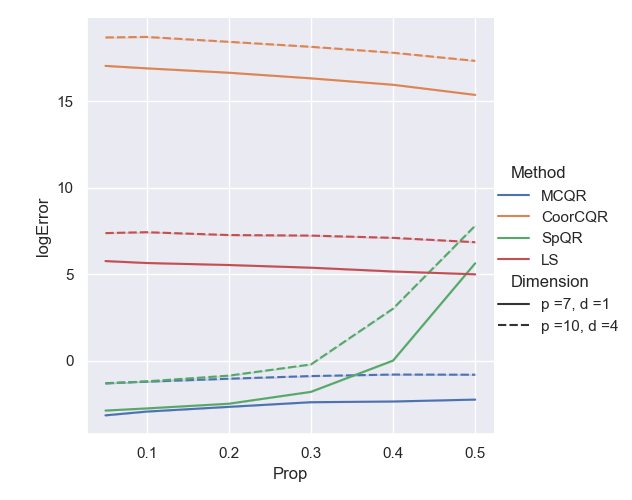}\label{fig: GaussianContam}}

\caption{Logarithmic average estimation loss, measured in matrix Mahalanobis norm, of the regression coefficient estimated by MCQR, CoorCQR, SpQR and LS for data generated according to the mechanism described in Section~\ref{sec: imp} for various outlier contamination proportion (from $0.05$ to $0.5$), covariate dimension $p$ and response dimension $d$ and two different noise contamination models. We fix $n=200$.}

\label{fig: Contamination}
\end{figure}


Figure~\ref{fig: Contamination} shows the performance of the four procedures for increasing levels of contamination proportion $\epsilon$. We observe that MCQR is generally more robust than other competitors when we add additional outliers to the random error. Interestingly, we see that in the case where $d = 1$, the CoorCQR, which reduces to the univariate CQR, shows a lack of robustness against the outlier contamination, while the 1-dimensional version of MCQR maintains its robustness even with a high proportion of contamination.

\printbibliography
\end{refsection}

\newpage
\appendix
\section*{Appendices}
We presents the proofs of all main results in Appendix \ref{apx: proofs}. Specifically, Appendix \ref{apx: LemmaJustification} -  \ref{apx: PropUnique} contain proof of the theoretical results in Section \ref{sec: MCQR_method}. Then the proof of Lemma \ref{le: wipLowerBound} and Lemma \ref{le: innerprodWassControl} are included in Appendix \ref{apx: wiplemma} and \ref{apx: innerprodWass}, respectively. The consistency Theorem \ref{thm: Consistency} is proved in Appendix \ref{apx: ConsisThm}, while the proof  regarding the convergence rate, as specified in Theorem \ref{thm: FasterRate}, is showed in \ref{apx: FasterRate}. All auxillary results are included in Appendix \ref{apx: Auxillary}.

Appendix \ref{apx: ref_distribution} provides an example of a check function under a $U[-1, 1]$ reference distribution. A brief introduction about spatial quantile is provided in Appendix \ref{Apx: SimulationDetails}

\begin{refsection}

\section{Proof of main results}\label{apx: proofs}

We first record here some notations and several classical results on optimal transport theory that will be used throughout our theoretical analysis.
\subsection{Preliminaries on optimal transport theory}

Define the rescaled squared $\ell_2$-distance as $L_2(x, y):= \frac{1}{2}\|x-y\|^2$ for any $x, y \in \mathbb{R}^d$. In this notation, for two distributions $P$ and $Q$ on $\mathbb{R}^d$, we have 
\begin{align}
    \frac{1}{2}\mathcal{W}_2^2(P, Q) = \inf_{\gamma \in \mathcal{C}(P, Q)}\int L_2(x, y) d \gamma(x, y) =:  I_2(P, Q) \label{opt: w2}.
\end{align}

Our proof depends on the following Kantorovich duality \citep[see e.g.,][Theorem 1.3]{villani2021topics}
\begin{equation}
I_2(P, Q) = \sup_{\varphi, \psi \in \Phi_2} J_{P, Q} (\varphi, \psi),\label{opt: KantoDual}
\end{equation}
where $\Phi_2:= \{(\varphi, \psi) \in L^1(P)\times L^1(Q): \varphi(x) + \psi(y) \leq L_2(x, y)\}$ and 
\[
   J_{P, Q}(\varphi, \psi):= \int \varphi(x) d P(x) + \int \psi(y) dQ(y). 
\]
By taking advantage of the particular form of $L_2$, we also have for $\tilde{\Phi}:= \{(\varphi, \psi) \in L^1(P) \times L^1(Q): \varphi(x) + \psi(y) \geq x^T y\}$ that 
\begin{align}
    \int \frac{\|x\|^2}{2} d  P(x) + \int \frac{\|y\|^2}{2} d  Q(y) - \sup_{\varphi, \psi \in \Phi_2} J_{P, Q}(\varphi, \psi) = \inf_{\varphi, \psi \in \tilde{\Phi}} J_{P, Q}(\varphi, \psi) := \tilde{I}_2(P, Q).  \label{opt: L2EquivalentForm} 
\end{align}
Thus solve the problem of \eqref{opt: KantoDual} degenerates to solve the problem of $\tilde{I}_2(P, Q)$. 

For any $\varphi \in L^1(P)$, define its \textit{Legendre transform} as $\varphi^*(y):= \sup_{x \in \mathbb{R}^d}(x^T y - \varphi(x))$. Then it can be shown that $\varphi^*$ is a \textit{convex lower semi-continuous (l.s.c.)} function. This definition immediately implies that for any $(\varphi, \psi) \in \tilde{\Phi}$, $\psi(y) \geq \varphi^*(y), \ \forall y \in \mathbb{R}^d.$ Thus we have $J_{P, Q}(\varphi, \psi) \geq J_{P, Q}(\varphi, \varphi^*)$. Similarily, we have $\varphi(x) \geq \sup_{y \in \mathbb{R}^d}\bigl(x^T y - \varphi^*(y)\bigr) = \varphi^{**}(x), \ \forall x \in \mathbb{R}^d,$ which further implies that $J_{P, Q}(\varphi, \varphi^*) \geq J_{P, Q}(\varphi^{**}, \varphi^*)$. In the end, we deduced that 
\[\inf_{\varphi, \psi \in \tilde{\Phi}}J_{P, Q}(\varphi, \psi) \geq \inf_{\varphi \in L^1(P)} J_{P, Q}(\varphi^{**}, \varphi^*) \geq \inf_{\text{$\varphi$ is convex l.s.c.}} J_{P, Q}(\varphi^*, \varphi). \]
In fact, it can be shown \Citep[see e.g.][Theorem 2.9]{villani2021topics} that the equality above holds, i.e. \!there  exists a convex l.s.c. \!function $\varphi_0$ such that the \textit{conjugate pair} $(\varphi_0, \varphi_0^*)$ is the optimal solution to $\tilde{I}_2(P, Q)$. Now we are ready to state a fundemental theorem for the optimal transport theory with $L_2$ loss function. 

\begin{theorem}\label{thm: K-SOptimalityCriterion}\citep[Theorem 2.12 and Remark 2.13(iii)]{villani2021topics}
Let $P$ and $Q$ be probability measures on $\mathbb{R}^d$, with finite second moment. We consider the Kantorovich dual problem associated with the rescaled squared $\ell_2$-distance $L_2$. Then $\gamma \in \mathcal{C}(P, Q)$ is optimal if and only if there exists a convex l.s.c. \!function $\varphi_0$ such that 
\[
\mathrm{Supp}(\gamma) \subset \partial \varphi_0,
\]
or equivalently, for $\gamma$-almost all $(x, y)$,
\[
y \in \partial \varphi_0(x).
\]
Moreover, there exists a conjugate pair $(\varphi_0, \varphi_0^*)$ that is a minimizer of $\tilde{I}_2(P, Q)$. Thus $(\|\cdot\|^2/2 - \varphi_0, \|\cdot\|^2/2 - \varphi_0^*)$ solves the Kantorovich dual problem $I_2(P, Q)$.
\end{theorem}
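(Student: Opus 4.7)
The plan is to prove the theorem by combining three classical ingredients: Kantorovich strong duality for the quadratic cost, existence of a minimizer given by a conjugate pair, and the cyclic monotonicity/Rockafellar characterization of the support of optimal couplings. The preamble has already reduced $I_2(P,Q)$ to $\tilde{I}_2(P,Q)$ via \eqref{opt: L2EquivalentForm}, so the statement that $(\|\cdot\|^2/2-\varphi_0, \|\cdot\|^2/2-\varphi_0^*)$ solves $I_2$ will be immediate once the minimizer $(\varphi_0,\varphi_0^*)$ of $\tilde{I}_2$ is produced. Throughout, I will work with the equivalent formulation of maximizing the Wasserstein product $\gamma\mapsto \int\langle x,y\rangle \,d\gamma(x,y)$ over $\gamma\in\mathcal{C}(P,Q)$, which via \eqref{opt: L2EquivalentForm} satisfies
\begin{equation*}
\sup_{\gamma\in\mathcal{C}(P,Q)}\int\langle x,y\rangle \,d\gamma(x,y) \;=\; \tfrac{1}{2}\!\int\|x\|^2\,dP + \tfrac{1}{2}\!\int\|y\|^2\,dQ - I_2(P,Q).
\end{equation*}

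First I would establish strong duality between the maximization of the Wasserstein product and $\tilde{I}_2(P,Q)$. The weak inequality $\int\langle x,y\rangle\,d\gamma \leq J_{P,Q}(\varphi,\psi)$ for every $(\varphi,\psi)\in\tilde{\Phi}$ is trivial from the definition of $\tilde\Phi$. For the reverse, I would set up a Hahn--Banach separation on the space $C_b(\mathbb{R}^d\times\mathbb{R}^d)$ (or a Fenchel--Rockafellar argument applied to the convex functional $(\varphi,\psi)\mapsto J_{P,Q}(\varphi,\psi) + \chi_{\tilde{\Phi}}(\varphi,\psi)$), taking care to allow unbounded cost by a standard truncation/monotone approximation of $\langle x,y\rangle$ using the finite second-moment assumption on $P$ and $Q$ to pass to the limit.

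Next I would prove existence of a minimizer of $\tilde{I}_2$ that is a conjugate pair $(\varphi_0,\varphi_0^*)$. As noted in the preamble, given any admissible $(\varphi,\psi)\in\tilde\Phi$, replacing it by $(\varphi^{**},\varphi^*)$ produces a conjugate pair with $J_{P,Q}$ value no larger, so it suffices to minimize over convex l.s.c.\ $\varphi$. Take a minimizing sequence $\{\varphi_n\}$; by adding affine functions if needed (which do not change the sum $J_{P,Q}(\varphi_n^{**},\varphi_n^*)$ modulo mean-zero centering of $P$ and $Q$), arrange normalization so that $\{\varphi_n\}$ is locally uniformly bounded. Convex functions that are locally uniformly bounded are locally uniformly Lipschitz, so by Arzel\`a--Ascoli there is a subsequence converging locally uniformly to a convex l.s.c.\ $\varphi_0$; Fatou combined with the uniform integrability yielded by the finite second-moment hypothesis shows $(\varphi_0,\varphi_0^*)$ attains the infimum in $\tilde{I}_2(P,Q)$.

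For the \emph{if} direction of the characterization, suppose $\mathrm{Supp}(\gamma)\subset\partial\varphi_0$ for some convex l.s.c.\ $\varphi_0$. For $\gamma$-a.e.\ $(x,y)$ we have $y\in\partial\varphi_0(x)$, which by the Fenchel--Young equality characterizes equality in $\varphi_0(x)+\varphi_0^*(y)\geq\langle x,y\rangle$. Integrating against $\gamma$ and using its marginals,
\begin{equation*}
\int\langle x,y\rangle\,d\gamma(x,y) = \int\varphi_0\,dP+\int\varphi_0^*\,dQ = J_{P,Q}(\varphi_0,\varphi_0^*) \geq \sup_{\gamma'\in\mathcal{C}(P,Q)}\int\langle x,y\rangle\,d\gamma'(x,y),
\end{equation*}
where the last inequality is weak duality proved in the first step. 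Hence $\gamma$ attains the supremum and is optimal.

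For the \emph{only if} direction, I would argue that $\mathrm{Supp}(\gamma)$ is \emph{cyclically monotone}, i.e.\ for every finite collection $(x_1,y_1),\ldots,(x_N,y_N)\in\mathrm{Supp}(\gamma)$ and every permutation $\sigma$ of $\{1,\ldots,N\}$, $\sum_i\langle x_i,y_i\rangle \geq \sum_i\langle x_i,y_{\sigma(i)}\rangle$. The standard ``swap'' argument realizes this: if cyclic monotonicity failed at some cycle, product neighborhoods around the $(x_i,y_i)$ would have positive $\gamma^{\otimes N}$ mass, and a measurable rearrangement supported on those neighborhoods would produce a competitor $\gamma'\in\mathcal{C}(P,Q)$ with strictly larger Wasserstein product, contradicting optimality. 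Rockafellar's theorem then yields a convex l.s.c.\ $\varphi_0$ with $\mathrm{Supp}(\gamma)\subset\partial\varphi_0$, via the explicit construction $\varphi_0(x)=\sup\{\langle x-x_N,y_N\rangle + \cdots + \langle x_1-x_0,y_0\rangle\}$ over all finite chains starting at a fixed base point in $\mathrm{Supp}(\gamma)$.

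The main obstacle is the strong duality step and the measure-theoretic version of cyclic monotonicity: both require unbounded-cost extensions of the clean compact-support/continuous-cost theory. Handling these with only finite second moments demands careful truncation arguments and a measurable-selection theorem to make the swap construction produce a genuine competing coupling rather than a formal rearrangement.
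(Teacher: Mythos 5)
This statement is not proved in the paper: it is quoted verbatim from \citet[Theorem~2.12 and Remark~2.13(iii)]{villani2021topics}, and the preceding expository paragraph in the appendix only derives the equivalence between $I_2$ and $\tilde{I}_2$, which you already used. So your attempt is to reconstitute the Villani proof from scratch rather than to match an argument given in the paper.

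Your overall architecture --- weak/strong duality for the Wasserstein product, the Fenchel--Young ``if'' direction, cyclic monotonicity plus Rockafellar for ``only if'' --- is the right one and matches the standard proof. The one genuine gap is in your second step, the direct existence of a conjugate-pair minimizer of $\tilde{I}_2$ via a minimizing sequence. You claim that adding affine functions to $\varphi_n$ does not change $J_{P,Q}(\varphi_n^{**},\varphi_n^*)$ ``modulo mean-zero centering of $P$ and $Q$''. This is not true: replacing $\varphi$ by $\varphi(\cdot)+\langle a,\cdot\rangle$ sends $\varphi^*$ to $\varphi^*(\cdot-a)$, so
\[
J_{P,Q}(\varphi_a,\varphi_a^*) - J_{P,Q}(\varphi,\varphi^*) = \langle a, m_P\rangle + \int\bigl(\varphi^*(y-a)-\varphi^*(y)\bigr)\,dQ(y),
\]
and the second term equals $-\langle a,\int\nabla\varphi^*\,dQ\rangle$ only to first order and in general does not cancel $\langle a, m_P\rangle$ unless $\nabla\varphi^*$ already pushes $Q$ to $P$ --- which is exactly what you are trying to establish. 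Only additive \emph{constant} shifts $\varphi\mapsto\varphi+c$, $\varphi^*\mapsto\varphi^*-c$ leave $J$ invariant, and constant normalization alone does not give you local uniform boundedness of a minimizing sequence of convex functions. The clean (and standard) fix is to reverse the order of your steps~2 and~4: first obtain an optimal $\gamma\in\mathcal{C}(P,Q)$ from Prokhorov compactness, prove its support is cyclically monotone by your swap argument, and then let Rockafellar's construction supply the convex l.s.c.\ $\varphi_0$ with $\mathrm{Supp}(\gamma)\subset\partial\varphi_0$. The Fenchel--Young equality on $\mathrm{Supp}(\gamma)$ combined with weak duality then forces $(\varphi_0,\varphi_0^*)$ to attain $\tilde{I}_2(P,Q)$, which is precisely the existence statement; no Arzel\`a--Ascoli argument on a potential-valued minimizing sequence is needed.
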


The $1$-Wasserstein distance satisfies the following Kantorovich--Rubinstein duality. 

\begin{theorem}[Kantorovich--Rubinstein theorem]\label{thm: kant-Rubin}
    Suppose $\mathcal{X}$ is a subset of $\mathbb{R}^d$, define the diameter of $\mathcal{X}$ as $\mathrm{diam}(\mathcal{X}) := \sup_{x, y \in \mathcal{X}}\|x - y\|$. Let $\mathrm{Lip}(\mathcal{X})$ denote the space of all Lipschitz function on $\mathcal{X}$ and for any $f$ within this space define
    \[
    \lip{f}{\mathcal{X}} := \max\Bigl\{ \sup_{\substack{x, y \in \mathcal{X} \\ x \not= y}}\frac{|f(x) - f(y)|}{\|x - y\|} , \ \frac{\|f\|_{\infty}} {\mathrm{diam}(\mathcal{X})} \Bigr\}.
    \]
    Then 
    \begin{align}
        \mathcal{W}_1(P, Q) = \sup\Biggl\{ \int f(x) d P(x) - \int f(y) dQ(y): f \in \mathrm{L}^1(|P-Q|), \ f \in \mathrm{Lip}_1(\mathcal{X}) \Biggr\}, \label{eq: Kant-Rubin} 
    \end{align}
    where $\mathrm{Lip}_1(\mathcal{X}) := \{f: \lip{f}{\mathcal{X}}\leq 1\}$.
\end{theorem}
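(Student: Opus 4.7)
The plan is to follow the classical Kantorovich--Rubinstein argument: invoke Kantorovich duality for the cost $c(x,y) = \|x-y\|$, reduce the dual problem to pairs of the form $(f,-f)$ via a $c$-transform, and then verify that the extra constraint $\|f\|_\infty \leq \mathrm{diam}(\mathcal{X})$ built into $\mathrm{Lip}_1(\mathcal{X})$ is harmless. The ``$\geq$'' direction is immediate: for any coupling $\gamma \in \mathcal{C}(P,Q)$ and any $f \in \mathrm{Lip}_1(\mathcal{X})$,
\[
\int f\,dP - \int f\,dQ = \int \bigl(f(x)-f(y)\bigr)\,d\gamma(x,y) \leq \int \|x-y\|\,d\gamma(x,y),
\]
and taking the infimum over $\gamma$ and then the supremum over $f$ gives the bound.

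For the nontrivial direction, I would appeal to the Kantorovich duality for the cost $\|x-y\|$ (the analogue of Theorem~\ref{thm: K-SOptimalityCriterion}; see e.g.\ Villani, Theorem~1.3), which gives
\[
\mathcal{W}_1(P,Q) = \sup\Bigl\{ \int\varphi\,dP + \int\psi\,dQ : (\varphi,\psi)\in L^1(P)\times L^1(Q),\ \varphi(x)+\psi(y)\leq \|x-y\|\Bigr\}.
\]
Next apply the $c$-transform: given any feasible $(\varphi,\psi)$, set $\varphi^c(y):=\inf_{x\in\mathcal{X}}(\|x-y\|-\varphi(x))$ and $\varphi^{cc}(x):=\inf_{y\in\mathcal{X}}(\|x-y\|-\varphi^c(y))$. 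Being infima of families of functions that are 1-Lipschitz in the free variable, both $\varphi^c$ and $\varphi^{cc}$ are 1-Lipschitz; moreover $\varphi^c\geq\psi$ and $\varphi^{cc}\geq\varphi$, so replacing $(\varphi,\psi)$ by $(\varphi^{cc},\varphi^c)$ preserves feasibility and weakly increases the objective. A direct check shows that $f^c=-f$ whenever $f$ is 1-Lipschitz, since $\|x-y\|-f(x)\geq -f(y)$ by the Lipschitz inequality with equality at $x=y$. Hence the supremum is attained over pairs $(f,-f)$ with $f$ being 1-Lipschitz, yielding
\[
\mathcal{W}_1(P,Q) = \sup\Bigl\{ \int f\,dP - \int f\,dQ: f \text{ is 1-Lipschitz on }\mathcal{X}\Bigr\}.
\]

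It remains to incorporate the boundedness condition $\|f\|_\infty\leq\mathrm{diam}(\mathcal{X})$ that is built into $\mathrm{Lip}_1(\mathcal{X})$. Since $\int f\,dP-\int f\,dQ$ is invariant under adding a constant to $f$, for any 1-Lipschitz $f$ I would fix an arbitrary $x_0\in\mathcal{X}$ and replace $f$ by $g := f - f(x_0)$; the integral difference is unchanged, and for every $x\in\mathcal{X}$, $|g(x)|=|f(x)-f(x_0)|\leq \|x-x_0\|\leq\mathrm{diam}(\mathcal{X})$, so $\lip{g}{\mathcal{X}}\leq 1$ in the paper's sense. The resulting $g$ is bounded, hence automatically in $L^1(|P-Q|)$ as required. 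Combining this normalization with the preceding reduction finishes the argument.

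The principal obstacle is the first step: the paper only records Kantorovich duality for the $L_2$ cost in Theorem~\ref{thm: K-SOptimalityCriterion}, and one needs the analogous statement for the non-smooth cost $\|x-y\|$, which requires lower semi-continuity of the cost (clear here) together with mild integrability of the potentials. With that invoked, the $c$-transform reduction to 1-Lipschitz potentials and the constant-shift normalization are entirely routine.
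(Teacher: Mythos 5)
The paper records this theorem as a classical result without supplying a proof; it forms part of the optimal-transport preliminaries alongside Theorem~\ref{thm: K-SOptimalityCriterion}, and no proof is given in the text. Your argument is the standard textbook derivation (Kantorovich duality for the cost $\|x-y\|$, the $c$-transform reduction to potentials $(f,-f)$ with $f$ $1$-Lipschitz using $f^c=-f$, and a constant-shift normalization), and the main steps are correct. One minor point: for unbounded $\mathcal{X}$ the shifted function $g := f - f(x_0)$ need not be bounded as you assert, but it does satisfy $|g(x)|\leq\|x-x_0\|$ and hence lies in $L^1(|P-Q|)$ once $P$ and $Q$ have finite first moments (implicit in $\mathcal{W}_1<\infty$), and the constraint $\|g\|_\infty\leq\mathrm{diam}(\mathcal{X})$ is then vacuous; for bounded $\mathcal{X}$ your reasoning is exactly right. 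You also correctly flag that the only ingredient not already recorded in the paper's preliminaries is Kantorovich duality for the $\ell_2$-norm cost, which is covered by the same general duality result of Villani cited for the quadratic cost, since that statement is valid for any lower semi-continuous cost.
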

In particular, the $1$-Wasserstein distance can be seen as a special case of a integral probability metric (defined below) with respect to the $\mathrm{Lip}_1$ function class.
\begin{definition}[Integral Probability Metrics]
    Given probability measures $P$ and $Q$ as before, the integral probability metrics (IPMs) with respect to function class $\mathcal{F}$ is defined as 
    \begin{equation}
        \ipm{P}{Q}{\mathcal{F}} = \sup_{f \in \mathcal{F}}\Bigl\{ \int f(x) d P(x) - \int f(y) dQ(y)\Bigr\}. \label{Eq:IPM}
    \end{equation}
\end{definition}
\subsection{Additional notation}
Suppose $T$ is a map from a measurable space $X$, equipped with a measure $\mu$, to an arbitrary space $Y$, we denote by $T\#\mu$ as the push-forward of $\mu$ by $T$. Specifically, $(T\#\mu)(A) = \mu(T^{-1}(A))$ for any measurable set $A$.

Suppose $X_1, \ldots, X_n$ are random samples from some probability distribution $P$. Then given any function class $\mathcal{F}$, define the Rademacher complexity of $\mathcal{F}$ as 
\begin{equation}
\mathcal{R}_n(\mathcal{F}, P) := \E\Bigl(\sup_{f \in \mathcal{F}}\frac{1}{n}\sum_{i = 1}^n\xi_i f(X_i)\Bigr), \label{Eq:RademacherComplexity}    
\end{equation}
where $\xi_i$'s are independent Rademacher random variables, independent from $X_1,\ldots,X_n$. The $p$-dimensional closed ball in centered at $x \in \mathbb{R}^p$ with radius $r > 0$ is denoted by $\mathcal{B}_{x, r}^{p}: = \{y\in \mathbb{R}^p: \|y\| \leq r \}$ and we omit $r$ when $r = 1$: $\mathcal{B}_{x, 1}^{p}:=\mathcal{B}_x^p$. The matrix operator norm is denoted by $\|\cdot\|_{\ope}$, so that $\|A\|_{\ope}:=\sup_{x: \|x\| = 1}\|Ax\|$.

\subsection{Proof for Lemma \ref{le: justification}}\label{apx: LemmaJustification}

\begin{proof}
    For any fixed $\tau \in (0, 1)$, by the definition of check function $\rho_\tau$ we have 
    \[
    q_Y(\tau) \in \argmin_{\theta}\E \rho_\tau(Y - \theta),
    \]
    where $q_Y(\cdot)$ is the quantile function of $Y$. Thus under the linear model \eqref{mlm} we have for any $x \in \mathbb{R}^p$,
    \begin{align}
(b^*, q_{\varepsilon}^*(\tau)) \in \argmin_{b \in \mathbb{R}^{1 \times d}, q \in \mathbb{R}} \E [\rho_\tau (Y- bX - q) \mid X = x]. \label{opt: Conditionaljustfication}
    \end{align}
    For any $b \in \mathbb{R}^{1 \times p}$ and $q \in \mathbb{R}$, define $ g(x; b, q):=\E [\rho_\tau (Y- bX - q) \mid X = x]$, then \eqref{opt: Conditionaljustfication} implies that 
    \[
     g (x; b^*,q_{\varepsilon}^*(\tau)) \leq  g (x; b, q),
    \]
    thus
    \[
    \int_{\mathbb{R}^p}  g (x; b^*,q_{\varepsilon}^*(\tau)) \, d x \leq \int_{\mathbb{R}^p}  g (x; b, q) \, dx.
    \]
    Then by the Fubini Theorem and the Law of iterated expectation, we have
    \begin{align}
    \E[\rho_\tau (Y- b^* X - q_\varepsilon^*(\tau))] \leq \E[\rho_\tau (Y- b X - q)]. \label{opt: Unconjust} 
    \end{align}
    Because the quantile function $q_\varepsilon^* \in \mathcal{M}$, thus \eqref{opt: Unconjust} implies that for any $q(\cdot) \in \mathcal{M}$
    \begin{align}
         \int_0^1 \E[\rho_\tau (Y- b^* X - q_\varepsilon^*(\tau))] \, d \tau \leq \int_{0}^1 \E[\rho_\tau (Y- b X - q(\tau))]  d \tau \notag.
    \end{align}
    Therefore the result follows by applying the Fubini Theorem once again. 
\end{proof}

\subsection{Proof for Lemma \ref{le: 1dmcqr}}\label{apx: Lemma1dmcqr}

\begin{proof}
    Let $\mathcal{C}$ denote the class of convex functions on $[0,1]$. By the definition of the check function $\rho_\tau$ and the fact that $X$ is mean-zero, we have 
    \begin{align}
        \inf_{q\in\mathcal{M}} \mathbb{E} \Bigl\{\int_0^1 \rho_\tau \bigl(Y- bX & - q(\tau)\bigr) d \tau \Bigr\} +  \frac{1}{2}\E Y \notag\\
        &=
            \inf_{q\in\mathcal{M}}\Bigl\{\mathbb{E}\int_0^1(Y - q(\tau) - bX)^+ d\tau +  \int_0^1(1-\tau)q(\tau)d\tau\Bigr\}  \notag \\
            &=
            \inf_{q\in\mathcal{M}}\Bigl\{\mathbb{E} \max_{t\in[0,1]} \int_0^t (Y - q(\tau) - bX) \, d\tau +  \int_0^1 \int_{\tau}^1 q(\tau)\,du\,d\tau\Bigr\}   \notag \\
            &=\inf_{\phi \in \mathcal{C}}\Bigl\{\mathbb{E}\max\limits_{t \in [0, 1]}(t(Y - bX) -\phi(t)) + \mathbb{E}\phi(U)\Bigr\}\notag\\
            &= \inf_{\phi\in\mathcal{C}} \mathbb{E} \bigl\{\phi^*(Y-bX) + \mathbb{E}\phi(U)\bigr\},
            \label{eq: CQRtoWip}
    \end{align}
    where $\phi^*(t) := \max_{t\in[0,1]} \{ut - \phi(u)\}$ is the Legendre conjugate of $\phi:[0,1]\to\mathbb{R}$ and we used Fubini's theorem and a change of variable $q \mapsto \phi\in\mathcal{C}$ defined by $\phi(t) = \int_{0}^t q(\tau)\,d\tau$ in the penultimate step.

    Let $\phi_0$ be the optimizer of \eqref{eq: CQRtoWip} and $\phi_0^*$ its Legendre conjugate, then by~\citet[Theorem~2.9]{villani2021topics}, we have 
    \begin{align*}
    \mathbb{E}\phi_0^*(Y-bX) + \mathbb{E}\phi_0(U) &= \inf_{\phi\in\mathcal{C}} \bigl\{\mathbb{E}\phi^*(Y-bX) + \mathbb{E}\phi(U)\bigr\} \\ &= \inf_{\phi,\psi\in\mathcal{C}:\phi(x)+\psi(y) \geq xy} \bigl\{\mathbb{E}\psi(Y-bX) + \mathbb{E}\phi(U)\bigr\}.
    \end{align*}
    Then by the arguments in~\citet[Sec~2.1.2]{villani2021topics}, the pair $(\tilde\phi_0, \tilde\psi_0)$ defined by $\tilde\phi_0(u) = u^2/2 - \phi_0(u)$ and $\tilde\psi_0(y) = y^2/2 - \phi_0^*(y)$ is the optimizer of the Kantorovich dual formulation of the optimal transport problem between $P^{Y-bX}$ and $P^U$, i.e.  
    \begin{align}
      \mathbb{E}\tilde\psi_0(Y-bX) + \mathbb{E}\tilde\phi_0(U) = \sup_{\substack{\tilde\phi, \tilde\psi \in L^1(\mathbb{R})\\ \tilde\phi(x)+\tilde\psi(y)\leq (x-y)^2/2}} \mathbb{E}\tilde\psi(Y-bX) + \mathbb{E}\tilde\phi(U).  \label{eq: KantDual}
    \end{align}
    By the strong duality theorem \citep[Theorem 1.3]{villani2021topics}, we have 
    \begin{align}
       \frac{1}{2}\mathcal{W}_2^2\bigl(P^{Y-bX}, P^{U} \bigr) &= \mathbb{E}\tilde\psi_0(Y-bX) + \mathbb{E}\tilde\phi_0(U) \notag \\
       &= \mathbb{E} \biggl\{\frac{1}{2} (Y - bX)^2 - \phi_0^*(Y-bX)\biggr\} + \mathbb{E} \biggl\{\frac{1}{2} U^2 -  \phi_0(U) \biggl\} \label{eq: WassKanto},
    \end{align}
    which together with the definition of $\wip{\cdot}{\cdot}$ implies that 
    \[
    \wip{P^{Y-bX}}{P^U} = \mathbb{E} \phi_0^*(Y - bX) + \mathbb{E}\phi_0(U).
    \]
    The result follows by combining the above identity with the optimality of $\phi_0$ in \eqref{eq: CQRtoWip}.
\end{proof}
\subsection{Proof for Proposition \ref{prop: unique}}\label{apx: PropUnique}
\begin{proof}
    By Brenier's Theorem\citep[Theorem 2.12 (ii)]{villani2009optimal}, there is a unique (invertible) optimal transport map $\phi:\mathbb{R}^d\to\mathbb{R}^d$ from $P^U$ to $P^\varepsilon$, which induces a coupling $P^{(U,\varepsilon)}:=(\phi\otimes \Id)\# P^U \in \mathcal{C}(P^U, P^\varepsilon)$. Then $P^{(U,\varepsilon)}\otimes P^{(b^*-b)X}$ is a joint distribution of $(U,\varepsilon,(b^*-b)X)$, which induces a joint distribution $P^{(U, Y-bX)} \in \mathcal{C}(P^U, P^{Y-bX})$ through the map $(u,e,z) \mapsto (u,e+z)$. Observe that the squared $L_2$ transport cost associated with $P^{(U,Y-bX)}$ is
    \begin{align}
        \int \|u - v\|_2^2 \, d P^{(U,Y-bX)}(u, v )
        &= 
        \int \| u - (e + z) \|_2^2  \, d(P^{(U,\varepsilon)}\otimes P^{(b^*-b)X})(u, e, z)\notag\\
        &= \int \|\phi(u) - u\|_2^2 \,d P^{U}(u) + \int \|z\|_2^2 \, d P^{(b^* - b)X}(z) \notag \\
        &= \mathcal{W}_2^2(P^U,P^{\varepsilon}) + \E\|(b^* - b)X\|_2^2. \label{eq: costlambda}  
    \end{align}
    Therefore, we have 
    \begin{align}
        \mathcal{L}(b; U) - \mathcal{L}(b^*; U) & = -\mathcal{W}_2^2(P^U, P^{Y-bX}) + \mathcal{W}_2^2(P^U,P^{\varepsilon}) + \E\|(b^* - b)X\|_2^2 \notag\\
        & = \int \|u - v\|_2^2 \, d P^{(U,Y-bX)}(u, v ) \notag \\
        & \quad - \inf_{Q\in\mathcal{C}(P^{U},P^{Y-bX})} \int \|u - v\|_2^2 \, d Q(u,v) \geq 0.
        \label{eq: unique}
    \end{align}
    This implies that $b^* \in \argmin \mathcal{L}(b; U)$. To prove the uniqueness, by Brenier's Theorem, since $P^U \in \mathcal{P}_{\mathrm{ac}}(\mathbb{R}^d)$, the optimal transport map from $P^U$ to $P^{Y-bX}$ is unique, thus the equality can only be achieved in~\eqref{eq: unique} if $P^{(U,Y-bX)}$ is the optimal coupling. In such a case, by the Knott-Smith optimality criterion \citep[Theorem 2.12(i)]{villani2021topics}, there exists a unique convex lower semi-continuous function $h:\mathbb{R}^d \to \mathbb{R}^d$ such that $\mathrm{Supp}(P^{(U,Y-bX)}) \subset \mathrm{Graph}(\nabla h)$ in the sense that, for any $(u, v)\in \mathrm{Supp}(P^{(U,Y-bX)})$, we have $v = \nabla h(u)$. Define an event $A = \{\nabla h(\phi^{-1}(\varepsilon))) = \varepsilon + (b^* - b)X\}$. Then
    \begin{align}
        \Prob(A) &= \Prob\bigl((\phi^{-1}(\varepsilon), \varepsilon + (b^* - b)X ) \in \{(u, v): \nabla h(u) = v\}\bigr) \notag \\
        & = P^{(U, Y-bX)} \{(u, v): \nabla h(u) = v\}  = 1 \notag.
    \end{align}
    This implies that $\varepsilon + (b^* - b)X = \nabla h (\phi^{-1}(\varepsilon))$ almost surely. Because $X$ is independent of $\varepsilon$, and is not a point mass, the only way to make this equality hold  is when $b=b^*$ as desired.
    \end{proof}
\subsection{Proof for Theorem \ref{thm: Consistency}}\label{apx: ConsisThm}
For notation simplicity, write $S := \Sigma^{-1/2}X$ and $S_i := \Sigma^{-1/2}X_i$ for $i \in [n]$ throughout the rest of the paper. 
\begin{proof}
    By the definition of $\hat b$ in~\eqref{opt: mcqr_est}, we have the following basic inequality:
    \begin{align}
    \mathcal{L}(\hat{{b}}) - \mathcal{L}({{b}^*}) \leq \mathcal{L}(\hat{{b}}) - \mathcal{L}_{n,m}(\hat{{b}}) + \mathcal{L}_{n,m}({{b}^*}) - \mathcal{L}({{b}^*})  \label{ineq: rbasic}.
    \end{align}
    By the explicit formula for the 2-Wasserstein distance between two elliptical distributions \citep[see][Theorem 2.1]{gelbrich1990formula}, we have 
    \begin{align}
    \wip{P^{(b^* - b)X}}{P^U} &= \frac{1}{2}\Bigl\{\E \|(b^* - b)X\|^2+ \E\|U\|^2 - \mathcal{W}_2^2(P^{(b^* - b)X}, P^U) \Bigr\}\nonumber\\
    &=\frac{1}{2}\Bigl\{\E \|(b^* - b)X\|^2+ \E\|U\|^2 - \bigl\|\bigl((b^* - b)\Sigma(b^* - b)^T\bigr)^{1/2} - I_d\bigr\|_{\F}^2\Bigr\}\nonumber\\
    &= \Tr\bigl\{\bigl((b^* - b)\Sigma(b^* - b)^T\bigr)^{1/2}\bigr\}\\
    &\geq \Tr^{1/2}\bigl\{(b^* - b)\Sigma(b^* - b)^T\bigr\} = \|b^* -b\|_{\Sigma}.\label{Eq:EllipticalWIP}
    \end{align}
    Hence, writing $r := \wip{P^{\varepsilon}}{P^U}$, we have by Lemma \ref{le: wipLowerBound} that for any $b\in\mathbb{R}^{d\times p}$,
    \begin{align}
        \mathcal{L}({b}) - \mathcal{L}({{b}^*}) &= \wip{P^{(b^*-b)X + \varepsilon}}{P^{U}} -\wip{P^{\varepsilon}}{P^{U}} \notag\\
        & \geq \sqrt{r^2 + \wip{P^{(b^* - b)X}}{P^U}^2} -  r \geq \sqrt{r^2 + \|b^* -b\|_{\Sigma}^2} -r. 
        \label{ineq: lb}
    \end{align}
    
    On the other hand, by Lemma~\ref{le: innerprodWassControl}, we have
    \begin{align}
        |\mathcal{L}(b) - \mathcal{L}_{n,m}(b)| &= \Bigl|\wip{P^{Y-bX}}{P^U} - \wip{P_n^{Y-bX}}{P_m^U}\Bigr| \notag \\
        &\leq \alpha_m \mathcal{W}_2(P^{Y -bX}, P_n^{Y -bX})  + (\E\|Y-bX\|^2)^{1/2}\mathcal{W}_2(P^{U}, P_m^{U})  \label{eq: ub},
    \end{align}
    where $\alpha_m := \bigl(\frac{1}{m}\sum_{i=1}^m\|U_i\|^2\bigr)^{1/2}$.
    We control the two terms on the right-hand side of~\eqref{eq: ub} separately. For the first term, suppose $P_1$ is the optimal coupling between $P^{S}$ and $P_n^{S}$, and $P_2$ is the optimal coupling between $P^{\varepsilon}$ and $P_n^{\varepsilon}$. Since $P_1\otimes P_2$ induces a coupling between $P^{Y-bX}$ and $P_n^{Y-bX}$ through the relation $Y-bX = (b^*-b)\Sigma^{1/2}S +\varepsilon$, we have  
    \begin{align*}
        \mathcal{W}_2^2(P^{Y-bX},& P_n^{Y-bX}) \\
        &\leq \int \|(b^* - b)\Sigma^{1/2}s_1 + e_1 - (b^* - b)\Sigma^{1/2}s_2 - e_2\|^2 d (P_1 \otimes P_2)(s_1, s_2, e_1, e_2) \notag \\
        &\leq \int \|b^* -b\|_{\Sigma}^2 \|s_1-s_2\|^2 dP_1(s_1,s_2)  + \int \|e_1 - e_2\|^2 d P_2(e_1, e_2) \notag \\
        & = \|b^* -b\|_{\Sigma}^2 \mathcal{W}_2^2(P^S, P^S_n) + \mathcal{W}_2^2(P^{\varepsilon}, P_n^{\varepsilon}).
    \end{align*}
    Thus,
    \begin{equation}
        \mathcal{W}_2(P^{Y-bX}, P_n^{Y-bX}) \leq  \|b^* -b\|_{\Sigma} \mathcal{W}_2(P^S, P^S_n) + \mathcal{W}_2(P^\varepsilon, P_n^{\varepsilon}) =: I_n(\|b^*-b\|_{\Sigma}).
        \label{ineq: ubW2}
    \end{equation}
    For the second term on the right-hand side of~\eqref{eq: ub},  define $s^2:= \E\|\varepsilon\|^2$, we have 
    \begin{align}
        (\E\|Y-bX\|^2)^{1/2} &= (\E\|(b^* -b)X + \varepsilon\|^2)^{1/2} \leq  \{2\mathbb{E}\|(b^*-b)X\|^2 + 2\mathbb{E}\|\varepsilon\|^2\}^{1/2}\notag\\
        &= \bigl\{2\|b^*-b\|_{\Sigma}^2 + 2s^2\bigr\}^{1/2}.
        \label{ineq: Y-bX}
    \end{align} 
    Combining~\eqref{eq: ub},~\eqref{ineq: ubW2} and~\eqref{ineq: Y-bX}, we obtain that
    \begin{align}
        |\mathcal{L}(b) - \mathcal{L}_{n,m}(b)| \leq  \alpha_m  I_n(\|b^* - b\|_{\Sigma})  +  \bigl\{2\|b^*- b\|_{\Sigma}^2 + 2s^2\bigr\}^{1/2}\mathcal{W}_2(P^{U}, P_m^{U}) . \label{ineq: ub} 
    \end{align}
    Since~\eqref{ineq: lb} and~\eqref{ineq: ub} holds for arbitrary $b\in\mathbb{R}^{d\times p}$, we have by~\eqref{ineq: rbasic} that 
    \begin{align}
    \bigl\{r^2 + \|b^* -\hat b\|_{\Sigma}^2\bigr\}^{1/2}-r &\leq \alpha_m I_n(\|b^* - \hat{b}\|_{\Sigma})  +   \bigl\{2\|b^*-\hat b\|_{\Sigma}^2 + 2s^2\bigr\}^{1/2}\mathcal{W}_2(P^{U}, P_m^{U}) \notag \\
    &\quad + \alpha_mI_n(0) + s\sqrt{2}\mathcal{W}_2(P^U, P_m^U). \notag 
    \end{align}
    We apply Lemma \ref{le: eleinequality} to the left-hand side of the above and combine with the fact that $r^2 \leq s^2 d$, we deduce that for some constant $C>0$ only depending on $d$, the following inequality holds: 
    \begin{align}
       &\frac{(2\|b^* - \hat{b}\|_{\Sigma} - 1) \wedge \|b^* - \hat{b}\|_{\Sigma}^2}{(\|b^* - \hat{b}\|_{\Sigma} \vee 1)} \notag \\
        &\qquad\leq C(2 + 2s)\bigl(\alpha_m \mathcal{W}_2(P^S, P_n^S) + (\sqrt{2} + 2s\sqrt{2})\mathcal{W}_2(P^U, P_m^U) + 2\alpha_m\mathcal{W}_2(P^\varepsilon, P_n^{\varepsilon})\bigr)\label{ineq: case1ulb}.
    \end{align}
    Thus we only need to control the right-hand side of the above.

    Note by Markov's inequality, $E_0^{(m)}:= \{\alpha_m \leq \sqrt{d \log m}\}$ holds with probability at least $1 - (\log m)^{-1}$. Similarily, by the convergence rate of empirical 2-Wasserstein distance in Theorem \ref{thm: WassRate} implies that there exists constants $C_1 > 0$ depending only on $p$ and $\ell$ and $C_2$, $C_3 >0$ depending only on $d$, $\ell$ such that for all $m,n > 1$, events $E_1^{(n)} := \{ \mathcal{W}_2(P^S, P_n^S) \leq  C_1 \tau_n^{1/2}(p, \ell) \log^{1/2}n \}$, $E_2^{(n)}: = \{\mathcal{W}_2(P^\varepsilon, P_n^\varepsilon) \leq C_2 \tau_n^{1/2}(d, \ell) \log^{1/2}n  \}$ and $E_3^{(m)}: = \{\mathcal{W}_2(P^U, P_m^U) \leq C_3 \tau_m^{1/2}(d, \ell) \log^{1/2}m \}$ hold with probability at least $1 - (\log n)^{-1}$, $1 - (\log n)^{-1}$, $1 - (\log m)^{-1}$, respectively. Therefore, for all $n>1$ and $m >n$, let $E^{(n,m)} := E_0^{(m)} \bigcap E_1^{(n)}\bigcap E_2^{(n)} \bigcap E_3^{(m)}$, we have $\Prob(E^{(n,m)}) \geq 1 - 4(\log n)^{-1}$.

    Note
    \begin{align}
        \frac{(2\|b^* - \hat{b}\|_{\Sigma} - 1) \wedge \|b^* - \hat{b}\|_{\Sigma}^2}{(\|b^* - \hat{b}\|_{\Sigma} \vee 1)} \geq \|b^* - \hat{b}\|_{\Sigma}^2 \wedge 1 \notag. 
    \end{align}
    Then combining this with (\ref{ineq: case1ulb}), and working on the event $E^{(n,m)}$, there exists some constant $ \tilde M >0$ depending only on $d, \ell, p$ such that
    \begin{align}
        \|b^* - \hat{b}\|_{\Sigma}^2 \wedge 1 &\leq \tilde M (1 + s)\bigl( \tau_{n}^{1/2}(p, \ell) + s \tau_{n}^{1/2}(d, \ell)\bigr)\log^{1/2} m \notag \\
        &\leq \tilde M \bigl(n^{-1/4} + n^{-\frac{1}{ d \vee p}} + n^{\frac{2 -\ell}{2 \ell}} \bigr)\log m, \notag 
    \end{align} 
    where a positive constant depending on $d$ is absorbed in $\tilde M$ in the final inequality, while we stick with notation $\tilde M$ for simplicity. 
    \end{proof}

\subsection{Proof for Lemma \ref{le: wipLowerBound}}\label{apx: wiplemma}
\begin{proof}
By the Brenier's Theorem \citep[Theorem 2.12 (ii)]{villani2009optimal}, there exists optimal transport maps $\phi, \psi: \mathbb{R}^d\to\mathbb{R}^d$ such that $\phi\# P^{\varepsilon} = P^U$ and $\psi\# P^Z = P^U$. Now, for any fixed $t \in [0, 1]$, we define $M_t(z, e): = \sqrt{1-t}\psi(z)+\sqrt{t}\phi(e)$, for all $z , e \in \mathbb{R}^d$. Since $M_t(Z,\varepsilon) \stackrel{\mathrm{d}}= U$, there exists a coupling $P^{(Z,\varepsilon,U)}\in \mathcal{C}(P^{Z}\otimes P^{\varepsilon}, P^U)$ whose associated transport map is $M_t$ (more specifically, $P^{(Z,\varepsilon,U)} = (\mathrm{Id}\otimes M_t)\#(P^Z\otimes P^\varepsilon)$).
Thus, we have  
\begin{align*}
    \llangle Z+\varepsilon, U\rrangle_{\mathcal{W}_2} &\geq \int \langle z+e, u \rangle  d  P^{(Z,\varepsilon,U)}(z,e, u)   \\
    & = \int \bigl\langle  z+e , \sqrt{1-t}\psi(z)+\sqrt{t}\phi(e)\bigr\rangle d (P^Z \otimes P^\varepsilon)(z, e)    \\
    & = \sqrt{1-t} \int \langle z, \psi(z)\rangle d P^Z(z) +\sqrt{t}\int \langle e , \phi(e)   \rangle d P^\varepsilon(e) \\
    & = \sqrt{1-t} \wip{Z}{U} + \sqrt{t} \wip{\varepsilon}{U},
\end{align*}
where in the penultimate step we used the fact that $\varepsilon$ is independent from $Z$. Now, taking $t = \frac{\wip{\varepsilon}{U}^2}{\wip{\varepsilon}{U}^2 + \wip{Z}{U}^2}$, we have 
\[
\wip{Z+\varepsilon}{U}^2 \geq {\wip{Z}{U}^2 + \wip{\varepsilon}{U}^2}
\]
as desired.
\end{proof}

\subsection{Proof for Lemma \ref{le: innerprodWassControl}}\label{apx: innerprodWass}
\begin{proof}
Let $\mathcal{X}_1,\mathcal{X}_2,\mathcal{Y}_1,\mathcal{Y}_2$ denote four copies of $\mathcal{X}$. By Lemma \ref{le: pmcomp}, there exists a distribution $\eta$ on $\mathcal{X}_1\times \mathcal{X}_2\times \mathcal{Y}_1\times \mathcal{Y}_2$ with marginals $P^{X_1}$, $P^{X_2}$, $P^{Y_1}$, $P^{Y_2}$, such that $\eta|_{\mathcal{X}_1 \times \mathcal{X}_2}$, $\eta|_{\mathcal{X}_2\times\mathcal{Y}_2}$, $\eta|_{\mathcal{X}_1\times\mathcal{Y}_1}$ are optimal couplings between $X_1$ and $X_2$, $X_2$ and $Y_2$, and $X_1$ and $Y_1$ respectively. Then we have 
        \begin{align*}
           \wip{X_1}{X_2} &- \wip{Y_1}{Y_2}       \\
           &= \!\sup_{\mu \in \mathcal{C}(P^{X_1}, P^{X_2})} \int\langle x_1, x_2 \rangle\, d \mu (x_1, x_2) -  \!\!\sup_{\nu \in \mathcal{C}(P^{Y_1}, P^{Y_2})}\int\langle y_1, y_2 \rangle \,d \nu (y_1, y_2)  \\
            &\leq \int \langle x_1, x_2 \rangle \, d \eta|_{\mathcal{X}_1 \times \mathcal{X}_2}(x_1,x_2) - \int \langle y_1, y_2 \rangle\, d \eta|_{\mathcal{Y}_1\times\mathcal{Y}_2}(y_1,y_2)  \\
            & \leq \int \langle x_1 , x_2 - y_2 \rangle  - \langle y_1 - x_1 , y_2 \rangle \, d \eta (x_1,x_2,y_1,y_2)    \\
            & \leq \Bigl(\int \|x_2 - y_2\|^2 d \eta|_{\mathcal{X}_2\times\mathcal{Y}_2}(x_2,y_2) \Bigr)^{1/2} \Bigl(\int \|x_1\|^2 d \eta|_{\mathcal{X}_1}(x_1)\Bigr)^{1/2}  \\
            & \qquad \quad + \Bigl(\int \|x_1 - y_1\|^2 d \eta|_{\mathcal{X}_1\times\mathcal{Y}_1}(x_1,y_1) \Bigr)^{1/2} \Bigl(\int \|y_2\|^2 d \eta|_{\mathcal{Y}_2}(y_2)\Bigr)^{1/2}  \\
            & = \mathcal{W}_2(P^{X_2}, P^{Y_2}) \cdot \bigl(\E\|X_1\|^2\bigr)^{1/2} +  \mathcal{W}_2(P^{X_1}, P^{Y_1}) \cdot  \bigl(\E\|Y_2\|^2\bigr)^{1/2},
        \end{align*}
        where we used the Cauchy--Schwarz inequality in the final inequality. 
Similarly, we can find  $\tilde \eta$ such that $\tilde\eta|_{\mathcal{Y}_1 \times \mathcal{Y}_2}$, $\tilde \eta|_{\mathcal{X}_2\times\mathcal{Y}_2}$, $\tilde \eta|_{\mathcal{X}_1\times\mathcal{Y}_1}$ are the corresponding optimal couplings between $Y_1$ and $Y_2$, $X_2$ and $Y_2$, and $X_1$ and $Y_1$ respectively. Then,
\begin{align*}
           \wip{Y_1}{Y_2} - \wip{X_1}{X_2}       
            &\leq \int \langle y_1, y_2 \rangle\, d \tilde \eta|_{\mathcal{Y}_1\times\mathcal{Y}_2}(y_1,y_2) - \int \langle x_1, x_2 \rangle \, d \tilde \eta|_{\mathcal{X}_1 \times \mathcal{X}_2}(x_1,x_2)  \\
            & \leq \int \langle y_1 - x_1 , y_2 \rangle - \langle x_1 , x_2 - y_2 \rangle  \, d \tilde\eta (x_1,x_2,y_1,y_2)    \\
            & \leq  \mathcal{W}_2(P^{X_1}, P^{Y_1}) \cdot  \bigl(\E\|Y_2\|^2\bigr)^{1/2} + \mathcal{W}_2(P^{X_2}, P^{Y_2}) \cdot \bigl(\E\|X_1\|^2\bigr)^{1/2}.
\end{align*}
Combining the above two bounds, we get the desried results. 
\end{proof}

    \begin{lemma}\label{le: pmcomp}
    For $L \in \mathbb{N}$, write $V = \{1,\ldots,L\}$. Let  $(\mathcal{X}_i,\Omega_i,\nu_i)$, $i\in V$ be $L$ probability spaces. Suppose that for some $E \subseteq V\times V$, and for each $(i,j)\in E$, we have a pre-specified joint probability measure $\xi_{i,j}$ on $(\mathcal{X}_i\times \mathcal{X}_j, \Omega_i\otimes \Omega_j)$ such that $\xi_{i,j}|_{\mathcal{X}_i} = \nu_i$ and $\xi_{i,j}|_{\mathcal{X}_j} = \nu_j$. If the simple undirected graph $G = (V,E)$ is acyclic, then there exists a joint probability measure $\rho$ on $\bigl(\prod_{i=1}^L \mathcal{X}_i, \bigotimes_{i=1}^L \Omega_i\bigr)$ such that $\rho|_{\mathcal{X}_i} = \nu_i$ for all $i\in V$ and $\rho|_{\mathcal{X}_i\times \mathcal{X}_j} = \xi_{i,j}$ for all $(i,j)\in E$.
    \end{lemma}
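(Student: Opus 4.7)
The plan is to treat this as an acyclic generalization of the classical gluing lemma from optimal transport theory and proceed by induction on the number of vertices $L$, following the tree structure of $G$.

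First I would reduce to the case where $G$ is connected. Since $G$ is a simple undirected acyclic graph, it decomposes as a disjoint union of trees $T_1,\ldots,T_r$ on a partition $V = V_1 \sqcup \cdots \sqcup V_r$. If I can build joint measures $\rho_k$ on $\prod_{i\in V_k}\mathcal{X}_i$ with the desired marginals for each tree, then $\rho := \rho_1 \otimes \cdots \otimes \rho_r$ inherits every single-vertex marginal $\nu_i$ and every edge marginal $\xi_{i,j}$ (since each edge of $G$ lies entirely inside some $V_k$). Hence it suffices to treat the case in which $G$ is a tree.

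Next, for a tree on $L$ vertices I would induct on $L$. The base case $L=1$ (with $E=\emptyset$) is handled by $\rho := \nu_1$. For the inductive step with $L\geq 2$, every finite tree has a leaf; pick such a leaf $\ell \in V$ with unique neighbor $m\in V$ and set $V':= V\setminus\{\ell\}$, $E' := E\setminus\{(\ell,m)\}$. The induced subgraph $G' := (V',E')$ is again a tree, with the family $\{\xi_{i,j}\}_{(i,j)\in E'}$ still satisfying the consistency hypothesis. By the induction hypothesis there exists a joint measure $\rho'$ on $\prod_{i\in V'}\mathcal{X}_i$ with $\rho'|_{\mathcal{X}_i}=\nu_i$ for all $i\in V'$ and $\rho'|_{\mathcal{X}_i\times\mathcal{X}_j} = \xi_{i,j}$ for all $(i,j)\in E'$.

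The main step is then to glue $\rho'$ with $\xi_{\ell,m}$ along their common $\nu_m$-marginal. Disintegrate $\xi_{\ell,m}$ with respect to its $\mathcal{X}_m$-marginal to obtain a Markov kernel $K:\Omega_\ell\times \mathcal{X}_m\to[0,1]$ satisfying $\xi_{\ell,m}(A\times B) = \int_B K(A\mid x_m)\,d\nu_m(x_m)$, and define $\rho$ on $\bigotimes_{i\in V}\Omega_i$ by
\[
\rho\bigl(A_\ell\times C\bigr) := \int_{C} K\bigl(A_\ell \,\big|\, x_m\bigr)\, d\rho'\bigl((x_i)_{i\in V'}\bigr),
\]
for $A_\ell \in \Omega_\ell$ and $C \in \bigotimes_{i\in V'}\Omega_i$. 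By construction the marginal of $\rho$ on $\prod_{i\in V'}\mathcal{X}_i$ equals $\rho'$, which already carries the correct $\nu_i$'s and edge marginals $\xi_{i,j}$ for $(i,j)\in E'$; integrating out the coordinates in $V'\setminus\{m\}$ and using $\rho'|_{\mathcal{X}_m}=\nu_m$ recovers $\xi_{\ell,m}$ on $\mathcal{X}_\ell\times\mathcal{X}_m$. The principal technical obstacle is the existence of the regular disintegration $K$, which is guaranteed when the $(\mathcal{X}_i,\Omega_i)$ are standard Borel spaces; this is automatic in all applications here since each $\mathcal{X}_i$ is a subset of $\mathbb{R}^d$. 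In essence, the proof is the iterated application of the standard gluing lemma of optimal transport along the edges of the tree, with acyclicity ensuring that no edge imposes a constraint that conflicts with one already built into $\rho'$.
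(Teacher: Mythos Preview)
Your proposal is correct and follows essentially the same argument as the paper: reduce to the connected (tree) case, then induct along the tree by attaching one vertex at a time via disintegration of the corresponding edge measure against the common marginal. The only cosmetic difference is that you peel off leaves while the paper builds up along a traversal ordering, and you handle the disconnected case explicitly by taking products whereas the paper simply assumes connectedness.
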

    \begin{proof}
    We assume first that $G$ is connected. Then, there exists a traversal of all the vertices in $G$ such that apart from the first vertex in the traversal, each vertex has exactly one edge connected to an earlier vertex. This can be done by using e.g.\ depth-first search or breadth first search, after arbitrarily assigning a root node, and each node is connected only to its parent node when first visited. Hence, without loss of generality, we may relabel the nodes so that this traversal is given by the ordering $1,2,\ldots, L$. We now prove by induction that for any $\ell\in \{1,\ldots, L\}$, there exists a measure $\rho_{1,\ldots,\ell}$ on $\mathcal{X}_1\times\cdots\times \mathcal{X}_\ell$ such that $\rho_{1,\ldots,\ell}|_{\mathcal{X}_i} = \nu_i$ for all $i\in \{1,\ldots,\ell\}$ and $\rho_{1,\ldots,\ell}|_{\mathcal{X}_i\times\mathcal{X}_j} = \xi_{i,j}$ for all $(i,j)\in E\cap \{1,\ldots,\ell\}^2$. 

    The base case of the induction is trivially true as we can take $\rho_1 = \nu_1$. Now assume that we have successfully constructed $\rho_{1,\ldots,\ell-1}$ for some $\ell \in \{2,\ldots, L\}$. Let $\ell'$ be the only neighbour of $\ell$ in $\{1,\ldots,\ell-1\}$ (the existence and uniqueness of $\ell'$ is guaranteed by the traversal ordering of the vertices in the previous paragraph). By the Disintegration Theorem \citep[see e.g.][]{graf1989classification}, there exists a probability measure $\xi_{\ell\mid \ell'}(\cdot \mid x_{\ell'})$ on $\mathcal{X}_\ell$ such that $d\xi_{\ell\mid \ell'}(x_\ell \mid x_{\ell'}) d\nu_{\ell'}(x_{\ell'}) = d\xi_{\ell',\ell}(x_{\ell'},x_\ell)$. Now, we define
    \[
    d\rho_{1,\ldots,\ell}(x_1,\ldots,x_\ell) = d\rho_{1,\ldots,\ell-1}(x_1,\ldots,x_{\ell -1}) d\xi_{\ell\mid\ell'}(x_\ell\mid x_{\ell'}).
    \]
    To see that $\rho_{1,\ldots,\ell}$ satisfies the required conditions, we check that for any $B \in \Omega_i$, $\rho_{1,\ldots,\ell}|_{\mathcal{X}_i}(B) = \rho_{1,\ldots,\ell-1}|_{\mathcal{X}_i}(B) = \nu_i(B)$ if $i \leq \ell-1$ and 
    \begin{align*}
    \rho_{1,\ldots,\ell}|_{\mathcal{X}_\ell}(B) &= \rho_{1,\ldots,\ell}(\mathcal{X}_1\times\cdots\times \mathcal{X}_{\ell-1}\times B)= \int_{\mathcal{X}_{\ell'}} \int_{B} d\xi_{\ell\mid\ell'}(x_\ell\mid x_{\ell'})  d\rho_{1,\ldots,\ell-1}|_{\mathcal{X}_{\ell'}}(x_{\ell'})\\
    &=\int_{\mathcal{X}_{\ell'}} \int_{B} d\xi_{\ell\mid\ell'}(x_\ell\mid x_{\ell'})  d\nu_{\ell'}(x_{\ell'}) = \xi_{\ell',\ell}(\mathcal{X}_{\ell'}\times B) = \nu_{\ell}(B),
    \end{align*}
    if $i = \ell$. Moreover, if $(i,j)\in E\cap\{1,\ldots,\ell\}^2$, then for $A\in\Omega_i$ and $B\in\Omega_j$, we either have $(i,j) \in E\cap\{1,\ldots,\ell-1\}^2$, in which case $\rho_{1,\ldots,\ell}|_{\mathcal{X}_i\times \mathcal{X}_j}(A\times B) = \rho_{1,\ldots,\ell-1}|_{\mathcal{X}_i\times \mathcal{X}_j}(A\times B) = \xi_{i,j}(A\times B)$, or $(i,j) = (\ell', \ell)$ (or $(\ell, \ell')$ which can be handled symmetrically), in which case, 
    \begin{align*}
    \rho_{1,\ldots,\ell}|_{\mathcal{X}_{\ell'}\times \mathcal{X}_\ell}(A\times B) &= \int_{A} \int_{B} d\xi_{\ell\mid\ell'}(x_\ell\mid x_{\ell'})  d\rho_{1,\ldots,\ell-1}|_{\mathcal{X}_{\ell'}}(x_{\ell'})\\
    &=\int_{A} \int_{B} d\xi_{\ell\mid\ell'}(x_\ell\mid x_{\ell'})  d\nu_{\ell'}(x_{\ell'}) = \xi_{\ell',\ell}(A\times B).
    \end{align*}
    This completes the induction. In particular, $\rho_{1,\ldots,L}$ satisfies the desired properties of $\rho$ in the lemma. 
    \end{proof}
    \subsection{Proof for Theorem \ref{thm: FasterRate}}\label{apx: FasterRate}
    Define event $\Theta := \{\|\hat b - b^*\|_{\Sigma} < 1\}$, then in the regime of (\ref{Condition: nmSufficientlyLarge}) we have $\Prob(\Theta) \geq 1 - 4(\log n)^{-1}$. We henceforth work on the event $\Theta$ throughout the proof. Write the linear transformation $A(b)  = (b^* - b)X + \varepsilon$ for any $b \in \mathbb{R}^{d \times p}$.

Our proof strategy for Theorem~\ref{thm: FasterRate} is to use the fact that $b^*$ maximizes $\mathcal{L}$ and $\hat b$ maximizes $\mathcal{L}_n$ to bound $\mathcal{L}(\hat b) - \mathcal{L}(b^*)$ by $|\mathcal{L}(b^*) - \mathcal{L}_n(b^*)| + |\mathcal{L}(\hat b) - \mathcal{L}_n(\hat b)|$. Write $\mathcal{B}:= \{b \in \mathbb{R}^{d \times p}: \|b - b^*\|_{\Sigma} < 1\}$. Then on the event $\Theta$, the key to control the latter is to establish a bound on 
\[
\sup_{b\in\mathcal{B}}\Bigl|\mathcal{W}_2^2(P^{A(b)}, P^U) - \mathcal{W}_2^2(P_n^{A(b)}, P_m^U) \Bigr|
\]
in Proposition~\ref{Prop:W2distdiff}. The proof of Proposition~\ref{Prop:W2distdiff} relies on rewriting the Wasserstein distances using the Kantorovich dual formulation. Specifically, writing $\tilde{\Phi}_b := \{(f, g) \in L^1(P_n^{A(b)}) \times L^1(P_m^U): v^T u \leq f(v) + g(u), \ \forall (v, u) \in \mathrm{Supp}(P_n^{A(b)}) \times \mathrm{Supp}(P_m^U) \}$, then for any fixed $ b \in  \mathcal{B}$, by Theorem \ref{thm: K-SOptimalityCriterion} and Lemma \ref{le: ConjugateUpperBound}, there exists a conjugate pair $(\tilde{\varphi}_{b;n, m}, \tilde{\varphi}_{b;n, m}^*)$ such that 
    \begin{align}
        (\tilde{\varphi}_{b;n, m}^*, \tilde{\varphi}_{b;n, m} ) &= \argmin_{(f, g) \in \tilde{\Phi}_b}  \int f \, d P_n^{A(b)} + \int g \, d P_m^U,\label{Eq:varphipair}\\
        \frac{1}{2}\mathcal{W}_2^2(P_n^{A(b)}, P_m^U) &= \int \|v\|^2/2 - \tilde\varphi_{b;n,m}^*(v) \, dP_n^{A(b)}(v) + \int \|u\|^2/2 - \tilde\varphi_{b;n,m}(u) \, dP_m^U(u),\nonumber
    \end{align}
    and 
    \begin{align}
            \|u\|^2/2 \leq \tilde{\varphi}_{b;n,m}(u) \leq \|u\|^2/2 + L_{b;n,m}, \quad
            \|v\|^2/2 - L_{b;n,m} \leq \tilde{\varphi}_{b;n,m}^*(v) \leq \|v\|^2/2 ,\label{ineq: BoundedConjugate}  
            \end{align}  
    where $L_{b;n,m}:= \max\{L_2(A(b)_i, U_j): 1\leq i \leq n, \ 1\leq j \leq m\}$.
    
    Before stating Proposition~\ref{Prop:W2distdiff}, we first establish two results on extensions of $\tilde\varphi_{b;n,m}$ and $\tilde\varphi^*_{b;n,m}$ onto the entire $\mathbb{R}^d$, which  will form the core of the argument in the proof of Proposition~\ref{Prop:W2distdiff}.    
    
    \begin{proposition}\label{le: ExtProperties}
    Let $\tilde\varphi$ and $\tilde\varphi^*$ be defined as in~\eqref{Eq:varphipair} and set $L_{b;n,m}:=\max_{i\in[n],j\in[m]} L_2(A(b)_i, U_j)$. Let $\zeta_{b;n,m}$, $\varphi_{b;n,m}$ and $\varphi^*_{b;n,m}$ be defined such that for all $v \in \mathbb{R}^d$, 
    \begin{align*}
    \zeta_{b; n,m}(v) &:= \sup_{u \in \mathrm{Supp}(P_n^{A(b)})}\bigl\{v^T u - \tilde{\varphi}_{b;n,m}(u)\bigr\} \vee \bigl(\frac{\|v\|^2}{2} - L_{b;n,m} \bigr),\\
    \varphi_{b;n,m}(v) &:= \sup_{u\in \mathbb{R}^d}\bigl\{v^T u - \zeta_{b;n,m}(u) \bigr\},\\
    \varphi_{b;n,m}^*(v) &:= \sup_{u\in \mathbb{R}^d}\bigl\{v^T u - \varphi_{b;n,m}(u) \bigr\}.
    \end{align*}
    Then we have 
        \begin{enumerate}[label = (\roman*)]
            \item for any $(u, v) \in \mathbb{R}^d \times \mathbb{R}^d$, $v^T u \leq \varphi_{b;n,m}(u) + \varphi_{b;n,m}^*(v)$;
            \item $\varphi_{b;n,m}(u) = \tilde{\varphi}_{b;n,m}(u)$ for $u \in \mathrm{Supp}(P_n^{A(b)})$ and $\varphi_{b;n,m}^*(v) = \tilde{\varphi}_{b;n,m}^*(v)$ for $v \in \mathrm{Supp}(P_m^U)$;
            \item for $u, v \in \mathbb{R}^d$, $-L_{b;n,m} \leq \frac{\|u\|^2}{2} - \varphi_{b;n,m}(u)  \leq 0$ and $0 \leq \frac{\|v\|^2}{2}-\varphi^*_{b;n,m}(v) \leq  L_{b;n,m}$; \label{suble: Int}
            \item Let $\pi_{b;n,m} \in\mathcal{C}(P_n^{A(b)}, P_m^U)$ be the optimal coupling between $P_n^{A(b)}$ and $P_m^U$. Then for any $(u,v)\in \mathrm{Supp}(\pi_{b;n,m})$, we have $v\in \partial \varphi_{b;n,m}(u)$ and $u \in \partial \varphi_{b;n,m}^*(v)$.
        \end{enumerate}
    \end{proposition}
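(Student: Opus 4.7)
The plan is to exploit the Legendre--Fenchel apparatus systematically, with $\zeta$ as the central object and viewing $(\varphi, \varphi^*)$ as the conjugate pair associated to it. Throughout the sketch I drop the subscript $b;n,m$ and write $\pi$ for $\pi_{b;n,m}$. The crucial preliminary observation is that $\zeta$ is a pointwise maximum of two convex l.s.c.\ proper functions --- the supremum of finitely many affine functions of $v$ and the quadratic $\|v\|^2/2 - L_{b;n,m}$ --- and is therefore itself convex, l.s.c.\ and proper. By the Fenchel--Moreau biconjugation theorem, $\varphi = \zeta^*$ and $\varphi^* = \zeta^{**} = \zeta$ form a pair of mutually Legendre-conjugate convex l.s.c.\ functions on $\mathbb{R}^d$. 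Claim~(i) is then immediate: it is the Fenchel--Young inequality for this conjugate pair, built into the definition of $\varphi^*$ as the Legendre transform of $\varphi$.

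I would next derive the key global bound $\zeta \leq \tilde{\varphi}^*$ on all of $\mathbb{R}^d$: the first piece of the maximum defining $\zeta$ is controlled by Fenchel--Young applied to $(\tilde{\varphi}, \tilde{\varphi}^*)$, giving $w^T u - \tilde{\varphi}(u) \leq \tilde{\varphi}^*(w)$, while the second piece $\|w\|^2/2 - L_{b;n,m}$ is dominated by $\tilde{\varphi}^*(w)$ via the lower bound in~\eqref{ineq: BoundedConjugate}. Taking Legendre transforms reverses these inequalities, so $\varphi = \zeta^* \geq \tilde{\varphi}$ pointwise, and from $\varphi^* = \zeta$ one obtains $\varphi^* \leq \tilde{\varphi}^*$ pointwise. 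Combining these global inequalities with~\eqref{ineq: BoundedConjugate} yields the two-sided bounds of (iii); alternatively the upper bound $\varphi \leq \|\cdot\|^2/2 + L_{b;n,m}$ follows directly by plugging $\zeta \geq \|\cdot\|^2/2 - L_{b;n,m}$ into the Legendre transform, and the sharper bound $\zeta \leq \|\cdot\|^2/2$ (used for $\varphi \geq \|\cdot\|^2/2$) follows by the same Fenchel--Young argument as before.

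Claim~(ii) is addressed in two directions. The ``easy'' direction on $\mathrm{Supp}(P_n^{A(b)})$ is immediate from the definition of $\zeta$ as a supremum: taking the particular index $u$ inside the sup gives $\zeta(w) \geq w^T u - \tilde{\varphi}(u)$ for every $w$, so $\varphi(u) = \sup_w\{u^T w - \zeta(w)\} \leq \tilde{\varphi}(u)$, and combined with the global $\varphi \geq \tilde{\varphi}$ this produces the desired equality. For the companion statement on $\mathrm{Supp}(P_m^U)$, I would invoke Knott--Smith optimality of the empirical coupling $\pi$ applied to the dual pair $(\tilde{\varphi}^*, \tilde{\varphi})$: for each $v \in \mathrm{Supp}(P_m^U)$ there exists a transport partner $u^* \in \mathrm{Supp}(P_n^{A(b)})$ satisfying the Fenchel equality $\tilde{\varphi}^*(u^*) + \tilde{\varphi}(v) = (u^*)^T v$, and substituting $u' = u^*$ into the first piece of $\zeta(v)$, together with the bound from~\eqref{ineq: BoundedConjugate}, upgrades the global inequality $\varphi^*(v) \leq \tilde{\varphi}^*(v)$ into equality.

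Finally, claim~(iv) follows from combining (i), (ii) and Knott--Smith: for $(u, v) \in \mathrm{Supp}(\pi)$, the resulting chain of (in)equalities turns the Fenchel--Young inequality of (i) into an equality, which by the standard equality case is equivalent to the sub-gradient assertions $v \in \partial\varphi(u)$ and $u \in \partial\varphi^*(v)$. The main obstacle I anticipate is the second half of claim (ii): the Knott--Smith witness needs to be combined with both the definition of $\zeta$ and the bounds from~\eqref{ineq: BoundedConjugate} in such a way that the restricted supremum defining $\zeta$ becomes the full Legendre transform of $\tilde{\varphi}$; once this is settled, all remaining steps are routine manipulations of conjugate pairs.
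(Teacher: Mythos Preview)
Your proposal is correct, and the overall architecture matches the paper's for parts (i), (iii), and (iv). The genuine divergence is in part (ii). The paper does not invoke the Fenchel--Moreau identification $\varphi^* = \zeta$, nor does it use a Knott--Smith transport partner to upgrade inequality to equality. Instead it establishes directly from the definitions the one-sided pointwise inequalities $\varphi \leq \tilde\varphi$ and $\varphi^* \leq \zeta \leq \tilde\varphi^*$ on the respective supports, and then argues by contradiction: if either inequality were strict at some support point, then---since the supports are finite---the pair $(\varphi,\varphi^*)$, which is feasible by (i), would have strictly smaller objective than $(\tilde\varphi,\tilde\varphi^*)$ when integrated against $P_m^U$ and $P_n^{A(b)}$, contradicting the optimality in~\eqref{Eq:varphipair}. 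Thus both halves of (ii) are dispatched symmetrically in a single variational step.

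Your route trades this contradiction argument for structural information: the biconjugation $\varphi^* = \zeta$ gives an explicit formula for $\varphi^*$ and makes the bounds in (iii) fall out almost for free, while the global inequality $\varphi \geq \tilde\varphi$ (from conjugating $\zeta \leq \tilde\varphi^*$) handles one half of (ii) without any optimality appeal. The price is the Knott--Smith witness for the second half, which is correct but slightly heavier than the paper's symmetric treatment. For (iv) the two arguments are effectively identical: the paper also reduces to the equality case of Fenchel--Young, via the observation that (i) and (ii) force $\int(\varphi(u)+\varphi^*(v)-v^Tu)\,d\pi = 0$ with a non-negative integrand.
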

    \begin{proof}
        Note (i) is immediately followed by the definition of $\varphi_{b;n,m}$ and $\varphi_{b;n,m}^*$. For part (ii), note for any $u \in \mathrm{Supp}(P_m^U) $
        \begin{align}
            \varphi_{b;n,m}(u) \leq \sup_{v\in \mathbb{R}^d}\bigl\{ v^T u - v^T u + \tilde{\varphi}_{b;n,m}(u) \bigr\} = \tilde{\varphi}_{b;n,m}(u) \label{ineq: phi} .
        \end{align}
        For any $v \in \mathrm{Supp}(P_n^{A(b)})$,
        \begin{align}
            \varphi_{b;n,m}^*(v) &\leq \sup_{u \in \mathbb{R}^d} \bigl\{v^T u - v^T u + \zeta_{b;n,m}(v)\bigr\} \notag \\ &= \zeta_{b;n,m}(v) \leq \tilde{\varphi}_{b;n,m}^*(v) \vee \bigl(\frac{\|v\|^2}{2} - \|c\|_{\infty} \bigr) \leq \tilde{\varphi}_{b;n,m}^*(v) \label{ineq: phistar}.
        \end{align}
        Assume any of \eqref{ineq: phi} or \eqref{ineq: phistar} holds strictly, then because $P_n^{A(b)}$ and $P_m^{U}$ are finitely support it follows that 
        \begin{align}
            \int \varphi_{b;n,m}(u) dP_m^U(u) + \int \varphi_{b;n,m}^*(v) d P_n^{A(b)}(v) < \int \tilde{\varphi}_{b;n,m}(u) dP_m^U(u) + \int \tilde{\varphi}_{b;n,m}^*(v) d P_n^{A(b)}(v), \notag 
        \end{align}
        which contradicts to the optimality of $(\tilde{\varphi}_{b;n,m}, \tilde{\varphi}_{b;n,m}^*)$. This completes the proof for (ii). 
    
        For part (iii), by the bounded property \eqref{ineq: BoundedConjugate} and preceding constructions we have for $u \in \mathbb{R}^d$
        \begin{align}
            \|u\|^2/2 - \varphi_{b;n,m}(u)  \geq \inf_{v\in \mathbb{R}^d} \bigl\{L_2(u, v) -  L_{b;n,m}\bigr\} = - L_{b;n,m}. \label{ineq: lowervarphi} 
        \end{align}
        Moreover, we have
        \begin{align}
            \|u\|^2/2 - \varphi_{b;n,m}(u) &\leq -\bigl(\|u\|^2/2 - \zeta_{b;n,m}(u)\bigr) \notag \\
            &= -\inf_{u^\prime \in \mathrm{Supp}(P_n^{A(b)})}\bigl(L(u, u^\prime) - (\|u^\prime\|^2/2 - \tilde{\varphi}_{b;n,m}(u^\prime))\bigr) \wedge L_{b;n,m} \leq 0 \label{ineq: uppervarphi},
        \end{align}
        where the last step follows by the fact that $\|u^\prime\|^2/2 - \tilde{\varphi}_{b;n,m}(u^\prime)\leq 0$, for all $u^\prime \in \mathrm{Supp}(P_n^{A(b)})$. Here, we proved that $ -L_{b;n,m} \leq \frac{\|u\|^2}{2} - \varphi_{b;n,m}(u)  \leq 0 $ and the result holds. For any $v \in \mathbb{R}^d$, by \eqref{ineq: uppervarphi} we have 
        \begin{align}
            \|v\|^2/2 - \varphi^*_{b;n,m}(v) = \inf_{u \in \mathbb{R}^d}\Bigl(L_2(u, v) - (\|u\|^2/2 - \varphi_{b;n,m}(u))\Bigr) \geq 0. \label{Lowerbound: varphi^*}  
        \end{align}
        Moreover, by \eqref{ineq: lowervarphi} it follows that
        \begin{align}
            \|v\|^2/2 - \varphi^*_{b;n,m}(v) \leq -(\|v\|^2/2 - \varphi_{b;n,m}(v)) \leq L_{b;n,m}.\label{Upperbound: varphi^*} 
        \end{align}
        Thus we have $0 \leq \frac{\|v\|^2}{2}-\varphi^*_{b;n,m}(v) \leq  L_{b;n,m}$ as desired. 
    
        To prove (iv), note (ii) implies that 
        \begin{align}
            \int (\varphi_{b;n,m}(u) + \varphi_{b;n,m}^*(v) - v^T u )d \pi_{b; n,m}(u, v) = 0. \notag 
        \end{align}
        Furthermore, part (i) implies that the integrand of the above is nonnegative. Thus it follows that 
        \begin{align}
            \varphi_{b;n,m}(u) + \varphi_{b;n,m}^*(v)  = v^T u, \quad \forall (u, v) \in \mathrm{Supp}(\pi_{b; n,m}). \notag 
        \end{align}
        Then the conclusion follows by \cite[Proposition 2.4]{villani2021topics}.
    \end{proof}

    Now we argue that for all $b \in \mathcal{B}$, $\varphi_{b;n,m}^*$ (and similarly, $\varphi_{b;n,m}$) is a piecewise Lipschitz function on a high probability event that does not depend on $b$. The following lemma plays a key role in the argument. It implies that the local Lipschitz constant of $\varphi_{b;n,m}^*$ is largely driven by the magnitude of the subdifferential of $\varphi_{b;n,m}^*$. The proof is analogous to \Citet[Lemma 10]{manole2021sharp}, but for the sake of completeness, we provide it here. 

    \begin{lemma}\label{le: BoundedSubdiff}
    Suppose $P$ and $Q$ are two distributions on $\mathbb{R}^d$. Let $(\varphi_0, \varphi_0^*)$ be the conjugate pair that solves $\tilde I_2(P, Q)$ (see \eqref{opt: L2EquivalentForm}). Then for any $r \geq 1$, $\varphi_0: \mathcal{B}_{0, r}^d \to \mathbb{R}$ and $\varphi_0^*: \mathcal{B}_{0, r}^d \to \mathbb{R}$ are Lipschitz continuous with parameters $L_0$ and $L_0^*$ respectively, where
    \begin{gather*}
    L_0 := \sup\bigl\{\|y\|: y \in \partial \varphi_0\bigl(\mathcal{B}_{0, r}^d\bigr) \bigr\} \quad, \text{and } \quad L_0^* := \sup\bigl\{\|z\|: z \in \partial\varphi_0^*(\mathcal{B}_{0, r}^d)\bigr\}
    \end{gather*}
\end{lemma}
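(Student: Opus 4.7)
\textbf{Proof sketch of Lemma~\ref{le: BoundedSubdiff}.} The strategy is to exploit the convexity of $\varphi_0$ and $\varphi_0^*$ together with the subgradient inequality to convert a bound on the norms of subgradients into a Lipschitz bound on the functions themselves. Since $(\varphi_0, \varphi_0^*)$ is the conjugate pair solving $\tilde{I}_2(P,Q)$, standard optimal transport theory (see e.g.\ \citet[Theorem 2.9 and Theorem 2.12]{villani2021topics}) ensures that $\varphi_0$ and $\varphi_0^*$ are both proper convex lower semi-continuous functions on $\mathbb{R}^d$. By convex analysis, at every point $x$ in the interior of the effective domain of a proper convex function, the subdifferential $\partial \varphi_0(x)$ is non-empty, so we may assume throughout that $\partial\varphi_0$ and $\partial\varphi_0^*$ are non-empty at every point of $\mathcal{B}_{0,r}^d$ (this is automatic in the settings in which this lemma is applied, e.g.\ to the extensions constructed in Proposition~\ref{le: ExtProperties}, which are finite on all of $\mathbb{R}^d$).

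The first step is to fix $x_1, x_2 \in \mathcal{B}_{0,r}^d$ and pick any $y_1 \in \partial\varphi_0(x_1)$ and $y_2 \in \partial\varphi_0(x_2)$. The defining subgradient inequalities yield
\[
\varphi_0(x_2) - \varphi_0(x_1) \geq \langle y_1, x_2 - x_1\rangle \quad \text{and} \quad \varphi_0(x_1) - \varphi_0(x_2) \geq \langle y_2, x_1 - x_2\rangle.
\]
Rearranging and applying Cauchy--Schwarz to both bounds then gives
\[
|\varphi_0(x_2) - \varphi_0(x_1)| \leq \max\{\|y_1\|, \|y_2\|\} \cdot \|x_1 - x_2\| \leq L_0 \|x_1 - x_2\|,
\]
where the last step uses that $y_1, y_2 \in \partial\varphi_0(\mathcal{B}_{0,r}^d)$. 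This establishes the Lipschitz bound for $\varphi_0$ on $\mathcal{B}_{0,r}^d$ with constant $L_0$.

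The claim for $\varphi_0^*$ is proved identically, by interchanging the roles of $\varphi_0$ and $\varphi_0^*$ throughout the above argument; this is legitimate because $\varphi_0^*$ is itself a proper convex l.s.c.\ function (being the Legendre transform of $\varphi_0$) and $(\varphi_0^*, \varphi_0^{**}) = (\varphi_0^*, \varphi_0)$ forms a conjugate pair in its own right. I do not expect any serious obstacle: the only non-trivial point is verifying the non-emptiness of the subdifferentials on $\mathcal{B}_{0,r}^d$, which in the intended application follows from the everywhere finiteness and continuity of the extensions in Proposition~\ref{le: ExtProperties}.
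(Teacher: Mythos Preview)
Your proof is correct and follows essentially the same idea as the paper's: both arguments reduce the Lipschitz estimate to a bound on subgradient norms via the subgradient inequality. The paper phrases this through the Fenchel--Young relation $\varphi_0(v) = u^\top v - \varphi_0^*(u)$ for $u\in\partial\varphi_0(v)$ (citing \citet[Proposition~2.4]{villani2021topics}) and an $\epsilon$-approximation to the supremum, whereas you invoke the subgradient inequality directly at both endpoints; your route is slightly more elementary and avoids the unnecessary approximation step.
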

\begin{proof}
    We focus on $\varphi_0$ and the same argument can be used for $\varphi_0^*$. Firstly, by \citet[Proposition 2.4]{villani2021topics}, for any $v \in \mathcal{B}_{0, r}^{d}$, $\varphi_0$ admits the following representation
    \begin{align}
        \varphi_0(v) = \sup_{u \in \partial\varphi_0 (v)}\bigl\{u^T v - \varphi_0^*(u)\bigr\}. \notag 
    \end{align}
    Thus, there exists a sequence of $u_k \in \partial\varphi_0 (v) $ such that 
    \begin{align}
         \varphi_0(v) \leq u_k^T v - \varphi_0^*(u_k) + \frac{1}{k}, \quad \text{for} \ k= 1, 2, \ldots. \notag 
    \end{align}
    Then for any $v^\prime \in \mathcal{B}^{d}_{0, r}$, we have 
    \begin{align}
        \varphi_0(v) - \varphi_0(v^\prime) &\leq u_k^T v- \varphi_0^*(u_k) + \frac{1}{k} - u_k^T v^\prime +\varphi_0^* (u_k) \notag \\
        &= u_k^T(v-v^\prime) + \frac{1}{k} \leq L_{0} \|v - v^\prime\| + \frac{1}{k}, \notag 
    \end{align}
    and the Lipschitz property follows by letting $k \to +\infty$.
\end{proof}

For all $j \geq 0$, define $L_j := [-3^j, 3^j]^{d}$ and let  $P_j := L_j \setminus L_{j - 1}$.  We note that each $P_j$ can be further partitioned into $N:=3^{d}-1$ cubes, say $\{P_{j,k}\}_{k=1,\ldots,N}$, that are each congruent to $L_{j-1}$. We note that all elements of $P_j$ has norm bounded by  $\ell_j := \sup_{z \in {P}_j}\|z\| = 3^j\sqrt{d} $. 

For any $I \subset \mathbb{R}^d$, we write  $\mathcal{C}(I)$ for the set of all the convex function on $I$. We define
$\mathcal{C}_{m, u}(I) := \{f\in \mathcal{C}(I): \exists m, u >0, \ \mathrm{s.t.} \ |f(x) - f(y)| \leq m\|x-y\|, \ |f(x)| \leq u, \ \forall x, y \in I\}$ to be the class of $m$-Lipschitz convex functions on $I$ bounded in value by $u$. Given a sequence $M $ and $U$,  define
    \[
    \mathcal{C}_{M,U} : =\bigl\{f: \mathbb{R}^p \times \mathbb{R}^d \to \mathbb{R}: f|_{P_{j,k}} \in \mathcal{C}_{M_j, U_j}(P_{j,k}), \ j\geq 0, \ 1\leq k \leq N  \bigr\}.
    \]
We now prove that for suitable choices of $M$, $U$ and $R,T $, $\varphi_{b;n,m}^* - \varphi_{b;n,m}^*(0)\in \mathcal{C}_{M,U}$ and $\varphi_{b;n,m} - \varphi_{b;n,m}(0) \in \mathcal{C}_{R, T}$ on a high probability event that does not depend on $b$. Recalling that we write $S= \Sigma^{-1/2}X$ and $S_i = \Sigma^{-1/2}X_i$ for $i \in [n]$. 

Let's first discuss the concentration property of $P^U$ and $P^{A(b)}$ and their empirical counterparts $P_m^U$ and $P_n^{A(b)}$. In fact, due to the Gaussian assumption, $P^U$ is a $\sw{\sqrt{2d}}{2}$ distribution. Moreover, by the sub-Weibull assumptions on $S$ and $\varepsilon$, there exists a constant $\sigma>0$ depends on $\sigma_1, \sigma_2$ such that $\|S\| + \|\varepsilon\| \sim \sw{\sigma}{\alpha \wedge \beta}$. Thus by noting that $\|A(b)\| \leq \|S\| + \|\varepsilon\|$ for all $b \in \mathcal{B}$, $P^{A(b)}$ is a $\sw{\sigma}{\alpha \wedge \beta}$ random vector as well. However, the concentration of the corresponding empirical measures introduces extra randomness on the sub-Weibull parameters, as defined here   
\[E_{1, m} = \int \exp\Bigl(\frac{\|u\|^2}{4d}\Bigr) \, dP_m^U, \quad \text{and} \quad E_{b; 2, n} = \int \exp\Bigl(\frac{\|v\|^{\alpha \wedge \beta}}{4 \sigma^{\alpha \wedge \beta}}\Bigr) \, dP_n^{A(b)}.\]
The following lemma constructs the sub-Weibull properties of $P_m^U$ and $P_n^{A(b)}$. 

\begin{lemma}\label{le: SWpropertyofEmpiricalPUandPAb}
    Define $E_{2, n} := \sup_{b \in \mathcal{B}} E_{b;2,n}$. Then for any fixed $n, m \geq 1$ we have that $P_m^U$ is $\sw{(2dE_{1, m})^{1/2}}{2}$ and $P_{n}^{A(b)}$ is $\sw{\sigma (2E_{2,n})^{1/(\alpha \wedge \beta)}}{\alpha \wedge \beta}$, where $E_{1, m} \leq 2 + \sqrt{\frac{\log m}{m}}$ with probability at least $1 - 2(\log m)^{-1}$ and $E_{2, n } \leq 2 + \sqrt{\frac{\log n}{n}}$ with probability at least $1 - 2(\log n)^{-1}$.
\end{lemma}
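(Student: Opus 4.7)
The argument divides naturally into two parts: (i) verifying the sub-Weibull property of the empirical measures $P_m^U$ and $P_n^{A(b)}$ \emph{given} the values of the random quantities $E_{1,m}$ and $E_{2,n}$, and (ii) controlling $E_{1,m}$ and $E_{2,n}$ with high probability.

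For (i), the key observation is a Jensen inequality trick. Writing
\[
\int \exp\!\Bigl(\frac{\|u\|^2}{4d\, E_{1,m}}\Bigr)\,dP_m^U(u) \;=\; \frac{1}{m}\sum_{i=1}^m \bigl(\exp(\|U_i\|^2/(4d))\bigr)^{1/E_{1,m}},
\]
and noting that $E_{1,m}\ge 1$ (each summand defining $E_{1,m}$ is at least $1$), the concavity of $x\mapsto x^{1/E_{1,m}}$ combined with Jensen's inequality yields an upper bound of $E_{1,m}^{1/E_{1,m}}\le e^{1/e} < 2$, which is exactly what is required to conclude that $P_m^U$ is \sw{(2dE_{1,m})^{1/2}}{2}. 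An analogous calculation with $\|\cdot\|^{\alpha\wedge\beta}$ in place of $\|\cdot\|^2$ and $4\sigma^{\alpha\wedge\beta}E_{2,n}$ in place of $4dE_{1,m}$ shows that $P_n^{A(b)}$ is \sw{\sigma(2E_{2,n})^{1/(\alpha\wedge\beta)}}{\alpha\wedge\beta}; uniformity in $b$ is automatic by monotonicity of $x\mapsto x^{1/E_{2,n}}$ since $E_{b;2,n}\le E_{2,n}$ by definition.

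For (ii), I would start by computing $\mathbb{E}\exp(\|U\|^2/(4d)) = (1 - 1/(2d))^{-d/2}$ via the $\chi_d^2$ moment generating function, and observing that this expectation is bounded by $\sqrt{2}$ uniformly in $d\ge 1$; the analogous identity at parameter $1/(2d)$ bounds the second moment (and hence the variance) by a constant for $d\ge 2$. Chebyshev's inequality then yields $E_{1,m}\le \sqrt{2}+\sqrt{\log m/m}\le 2+\sqrt{\log m/m}$ with probability at least $1-2/\log m$. For $E_{2,n}$ the same strategy applies, provided we first strip out the dependence on $b$: since $b\in\mathcal{B}$ entails $\|(b^*-b)\Sigma^{1/2}\|_{\ope}\le \|b^*-b\|_{\Sigma}< 1$, we have $\|A(b)_i\|\le \|S_i\|+\|\varepsilon_i\|$ almost surely and uniformly over $b$. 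Consequently,
\[
E_{2,n}\;\le\;\frac{1}{n}\sum_{i=1}^n \exp\!\Bigl(\frac{(\|S_i\|+\|\varepsilon_i\|)^{\alpha\wedge\beta}}{4\sigma^{\alpha\wedge\beta}}\Bigr),
\]
where $\sigma$ is a constant depending on $\sigma_1,\sigma_2$. Combining Assumption~\ref{assumption: sw} with the moment equivalences in Proposition~\ref{prop: swequivalent} allows us to select $\sigma$ so that each summand has mean bounded by a constant strictly below $2$ and a finite, controlled second moment; Chebyshev then yields $E_{2,n}\le 2+\sqrt{\log n/n}$ with probability at least $1-2/\log n$.

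The principal technical subtlety lies in the concentration step for $E_{2,n}$: one needs to choose $\sigma$ so that the \sw{\sigma_1}{\alpha} tail of $\|S\|$ and the \sw{\sigma_2}{\beta} tail of $\|\varepsilon\|$ combine into a \sw{\sigma}{\alpha\wedge\beta} tail for $\|S\|+\|\varepsilon\|$, \emph{and} simultaneously to guarantee that the resulting summand has second moment small enough that Chebyshev delivers a failure probability of order $1/\log n$. This requires a careful application of the triangle inequality for sub-Weibull Orlicz-type norms together with the equivalence between tail and moment characterizations of sub-Weibull variables, while tracking constants through the exponent manipulations that mix $\alpha$ and $\beta$ into $\alpha\wedge\beta$.
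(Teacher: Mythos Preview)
Your proposal is correct and follows essentially the same route as the paper: the Jensen trick with the concave map $x\mapsto x^{1/E}$ (using $E\ge 1$) for part~(i), and for part~(ii) the uniform domination $\|A(b)_i\|\le\|S_i\|+\|\varepsilon_i\|$ over $b\in\mathcal{B}$ followed by Chebyshev applied to the resulting i.i.d.\ average. You are in fact slightly more careful than the paper in flagging that the second-moment bound $\mathbb{E}\exp(\|U\|^2/(2d))=(1-1/d)^{-d/2}$ requires $d\ge 2$, a point the paper's proof glosses over.
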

\begin{proof}
    We only need to note that $E_{1, m} \geq 1$, and Jensen's inequality yields that
    \begin{align}
        \int \exp\Bigl(\frac{\|u\|^2}{4d E_{1, m}}\Bigr) \, dP_m^U \leq E_{1, m}^{\frac{1}{E_{1, m}}} \leq 2. \notag 
    \end{align}
    One the other hand, for each fixed $b \in \mathcal{B}$, a similar calculation can be applied to $P_n^{A(b)}$ and obtain that $P_n^{A(b)} \sim \sw{\sigma(2 E_{b;2,n})^{1/(\alpha \wedge \beta)}}{\alpha \wedge \beta}$. Thus by noting that 
    \[
    \int \exp\Bigl(\frac{\|v\|^{\alpha \wedge \beta}}{ 4E_{b;2,n}\sigma^{\alpha \wedge \beta}}\Bigr) \, dP_n^{A(b)}(v) \geq \int \exp\Bigl(\frac{\|v\|^{\alpha \wedge \beta}}{ 4E_{2,n}\sigma^{\alpha \wedge \beta}}\Bigr) \, dP_n^{A(b)}(v) 
    \]
    we have $P_n^{A(b)} \sim \sw{\sigma (2 E_{2, n})^{1/(\alpha \wedge \beta)}}{\alpha \wedge \beta}$

    Now we control the sub-Weibull parameters. Define ${\Gamma}_1:= \bigl\{E_{1, m}  \leq 2 + \sqrt{\frac{\log m}{m}} \bigr\}$, then by the Chebyshev's inequality we have
    \begin{align}
        \Prob({\Gamma}_1^c)  \leq \Prob \Bigl(|E_{1, m} - \E E_{1, m}| \geq \sqrt{\frac{\log m}{m} }\Bigr) \leq  \frac{m\mathrm{Var}(E_{1, m})}{\log m} \leq \frac{2}{\log m}\notag.  
    \end{align}
    To control $E_{2,n}$, we first note 
    \begin{align*}
        \E E_{2, n} &= \E\sup_{b \in \mathcal{B}}\exp\Bigl(\frac{1}{4}\bigl(\frac{\|A(b)\|}{{\sigma}}\bigr)^{\alpha \wedge \beta}\Bigr) \leq \E \exp\Bigl(\frac{1}{4}\bigl(\frac{\|S\| + \|\varepsilon\|}{{\sigma}}\bigr)^{\alpha \wedge\beta} \Bigr) \leq 2. \notag 
    \end{align*}
    Then define $\Gamma_2:= \Bigl\{E_{2, n} \leq 2 + \sqrt{\frac{\log n}{n}}\Bigr\}$, then we have 
    \begin{align*}
        \Prob(\Gamma_2^c) &\leq \Prob \Bigl(E_{2, n} - \E \exp\Bigl(\frac{1}{4}\bigl(\frac{\|S\| + \|\varepsilon\|}{{\sigma}}\bigr)^{\alpha \wedge\beta} \Bigr)  \geq \sqrt{\frac{\log n}{n} }\Bigr)  \\
        &\leq \Prob\Bigl(\frac{1}{n}\sum_{i = 1}^n\exp\Bigl(\frac{1}{4}\bigl(\frac{\|S_i\|+ \|\varepsilon_i\|}{ \sigma}\bigr)^{\alpha \wedge \beta}\Bigr) -\E \exp\Bigl(\frac{1}{4}\bigl(\frac{\|S\| + \|\varepsilon\|}{{\sigma}}\bigr)^{\alpha \wedge\beta} \Bigr)  \geq \sqrt{\frac{\log n}{n}}\Bigr) \\
        &\leq \frac{2}{\log n},
    \end{align*}
    where the final inequality is obtained by Chebyshev's inequality. 
\end{proof}

\begin{proposition}\label{prop: PiecewiseLipvarphi}
Let $J_n = \floor*{\frac{1}{2}\log_3 \Bigl(\frac{\log n}{16\gamma_2d}\Bigr)}$, $I_m = \floor*{ \frac{1}{2}\log_3\Bigl(\frac{\log m}{8d}\Bigr) }$ and $l_{n,m} =  (\log m )\vee (\log n)^{2/(\alpha \wedge \beta)}$. Then there exist an event $\Upsilon$ with probability at least $1-12 (\log n)^{-1}$ and constants $C_i^\prime, C_i^\prime, \tilde{C}_i, \tilde{C}_i>0$ depends on $d, \gamma_1, \gamma_2, \sigma_1, \sigma_2, \alpha, \beta$ such that on $\Upsilon$, for all $b\in\mathcal{B}$, we have $\varphi^*_{b;n,m} - \varphi^*_{b;n,m} (0)\in \mathcal{C}_{M,U}$ and $\varphi_{b;n,m} - \varphi_{b;n,m}(0) \in \mathcal{C}_{R, T}$ where $M$ and $U$ are chosen as
    \begin{gather}\label{MandU}
    M_j = \begin{cases}
    C_0^\prime \ell_j, &\quad  0\leq j\leq J_n \\
    C_1^\prime l_{n,m}\ell_j, &\quad j > J_n
    \end{cases}, \quad 
    U_j = \begin{cases}
    \tilde{C}_0 \ell_j^{3}, &\quad  0\leq j\leq J_n \\
    \tilde{C}_1 l_{n,m}\ell_j^{3}, &\quad j > J_n
    \end{cases},  
    \end{gather}
    and $R$ and $T$ are chosen as  
        \begin{gather}\label{RandT}
        R_i = \begin{cases}
            C_2^\prime\ell_i, & \quad 0 \leq i \leq I_m \\
            C_3^\prime l_{n,m} \ell_i  ,& \quad i > I_m
        \end{cases}, \quad 
        T_i = \begin{cases}
            \tilde C_2  \ell_i^{3},  &\quad 0 \leq i \leq I_m \\
            \tilde C_3  l_{n,m} \ell_i^{3}, &\quad i > I_m
        \end{cases}.  
    \end{gather}
\end{proposition}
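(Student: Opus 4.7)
The plan is to bound the piecewise Lipschitz constants via Lemma \ref{le: BoundedSubdiff}, which reduces the task to controlling the magnitude of the subdifferentials of $\varphi^*_{b;n,m}$ and $\varphi_{b;n,m}$ on each cube $L_j$. A universal subdifferential bound is immediate from Proposition \ref{le: ExtProperties}: whenever $u\in\partial\varphi^*_{b;n,m}(v)$, the Fenchel--Young equality $v^\top u=\varphi_{b;n,m}(u)+\varphi^*_{b;n,m}(v)$ combined with $\varphi_{b;n,m}(u)\ge \|u\|^2/2$ and $\varphi^*_{b;n,m}(v)\ge \|v\|^2/2-L_{b;n,m}$ from part (iii) yields $\|u-v\|^2\le 2L_{b;n,m}$, and symmetrically for $v\in\partial\varphi_{b;n,m}(u)$. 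Applying Lemma \ref{le: SWpropertyofEmpiricalPUandPAb} together with sub-Weibull maximum bounds on $\max_{i\le n}(\|S_i\|+\|\varepsilon_i\|)\lesssim (\log n)^{1/(\alpha\wedge\beta)}$ and $\max_{j\le m}\|U_j\|\lesssim \sqrt{d\log m}$, and using $\sup_{b\in\mathcal{B}}\|A(b)\|\le \|S\|+\|\varepsilon\|$, we obtain $\sup_{b\in\mathcal{B}} L_{b;n,m}\lesssim l_{n,m}$ on an event of probability $1-O((\log n)^{-1})$. Hence the cheap bound gives Lipschitz constant $\ell_j+\sqrt{2L_{b;n,m}}$ on $L_j$, which already matches the claimed $M_j\asymp l_{n,m}\ell_j$ and $R_i\asymp l_{n,m}\ell_i$ in the coarse regimes $j>J_n$ and $i>I_m$.

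For the fine scales $j\le J_n$ and $i\le I_m$, the sharper control $\|\partial\varphi^*_{b;n,m}(v)\|\lesssim \ell_j$ for $\|v\|\le \ell_j$ is needed, and I would obtain it by adapting the anti-concentration/duality argument of \citet[Theorem 12 and Lemma 13]{manole2021sharp}. Assumption \ref{assumption: anticoncen} on $\varepsilon$, together with the Gaussianity of $U$ and the stability of the anti-concentration class under convolution (Lemma \ref{le: AnticoncentrationforSumofTwoRVs}), provide matching lower bounds on the population mass of $P^{A(b)}$ and $P^U$ on every annulus of radius $\le \ell_{J_n}$ (resp.\ $\ell_{I_m}$); the thresholds $J_n,I_m$ are calibrated exactly so that each such annulus carries mass at least $\Omega(\sqrt{\log(n)/n})$ and $\Omega(\sqrt{\log(m)/m})$. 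A uniform empirical process argument transfers these mass bounds to $P_n^{A(b)}$ and $P_m^U$, with uniformity in $b\in\mathcal{B}$ handled via a covering of the $dp$-dimensional set $\mathcal{B}$ and the smooth dependence of $A(b)$ on $b$. Combined with the cyclical monotonicity of the optimal coupling $\pi_{b;n,m}$ from Proposition \ref{le: ExtProperties}(iv), this forces $\partial\varphi^*_{b;n,m}(v)$ to lie in a ball of radius $C\ell_j$ whenever $\|v\|\le \ell_j$, and symmetrically for $\partial\varphi_{b;n,m}$. Lemma \ref{le: BoundedSubdiff} then delivers the claimed $M_j=C_0'\ell_j$ and $R_i=C_2'\ell_i$ on each convex piece $P_{j,k},P_{i,k}$.

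To upgrade from Lipschitz to the value bounds $U_j,T_i$, I would use a chaining argument along the nested cubes: since $\varphi^*_{b;n,m}-\varphi^*_{b;n,m}(0)$ vanishes at the origin, tracing a straight line from $0$ to any $v\in P_{j,k}$ and accumulating Lipschitz contributions through the scales yields $|\varphi^*_{b;n,m}(v)-\varphi^*_{b;n,m}(0)|\le \sum_{i\le j} M_i\cdot O(\ell_i)=O(\ell_j^2)$ for $j\le J_n$ and $O(l_{n,m}\ell_j^2)$ for $j>J_n$; both are amply absorbed by the $\ell_j^3$- and $l_{n,m}\ell_j^3$-scaling of $U_j$, and the same reasoning gives $T_i$. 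The event $\Upsilon$ is the intersection of the sub-Weibull constant event from Lemma \ref{le: SWpropertyofEmpiricalPUandPAb}, the sub-Weibull maximum-norm events that control $L_{b;n,m}$, and the uniform empirical anti-concentration event on annuli up to scale $\ell_{J_n}$ (resp.\ $\ell_{I_m}$); a union bound distributes a probability budget of $12(\log n)^{-1}$ across these.

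The main obstacle is the uniformity in $b\in\mathcal{B}$ of the empirical anti-concentration mass bound: standard Bernstein-type pointwise concentration for the empirical measure of an annulus must be lifted to a uniform statement over the $dp$-dimensional set $\mathcal{B}$, which requires carefully exploiting the smooth dependence of $P^{A(b)}$ on $b$ together with a net argument on $\mathcal{B}$. A secondary subtlety is that the extended functions $\varphi^*_{b;n,m},\varphi_{b;n,m}$ agree with the empirical dual pair $(\tilde\varphi^*_{b;n,m},\tilde\varphi_{b;n,m})$ only on the empirical supports (Proposition \ref{le: ExtProperties}(ii)), so Fenchel-duality arguments away from those supports must invoke the canonical convex extensions $\zeta_{b;n,m}$, $\varphi_{b;n,m}$, $\varphi^*_{b;n,m}$ rather than the dual pair itself.
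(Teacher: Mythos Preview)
Your plan matches the paper's architecture closely: reduce to subdifferential bounds via Lemma~\ref{le: BoundedSubdiff}, use the crude $\sqrt{2L_{b;n,m}}$ bound from Proposition~\ref{le: ExtProperties}(iii) for the coarse scales $j>J_n$ and $i>I_m$, and use anti-concentration of the source measure together with the Manole--Niles-Weed subdifferential estimate (the paper cites their Theorem~11 and Proposition~16) for the fine scales. The value bounds $U_j,T_i$ are then immediate from Lipschitz chaining through the nested cubes, exactly as you describe.

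The one substantive difference is in how uniformity over $b\in\mathcal{B}$ is obtained for the fine-scale anti-concentration of $P_n^{A(b)}$. You propose a net on the $dp$-dimensional set $\mathcal{B}$ combined with smooth dependence of $A(b)$ on $b$, and you flag this as the main obstacle. The paper sidesteps the net entirely: it works with the $b$-independent product empirical measure $P_n^\varepsilon\otimes P_n^S$ on sets of the form $\mathcal{B}_{w,2}^d\times \mathcal{B}_0^p$, applies a single VC inequality (balls in $\mathbb{R}^d$ have VC dimension $\le d+2$) to transfer the population anti-concentration of $\varepsilon$ to $P_n^\varepsilon\otimes P_n^S$, and then uses the deterministic containment $\{(\varepsilon_i,S_i)\in\mathcal{B}_{w,2}^d\times\mathcal{B}_0^p\}\subset\{A(b)_i\in\mathcal{B}_{w,3}^d\}$ (valid for all $b\in\mathcal{B}$ since $\|(b^*-b)\Sigma^{1/2}S_i\|\le 1$ there) to get the required lower bound on $P_n^{A(b)}(\mathcal{B}_{w,3}^d)$ uniformly in $b$. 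This buys a cleaner event and avoids the bookkeeping of a covering argument. Relatedly, the paper does not invoke Lemma~\ref{le: AnticoncentrationforSumofTwoRVs} here at all (that lemma is used only later for the population potential $\psi_b^*$); note also that the lemma as stated gives a vacuous constant when $\E\|Y\|^2<1$, which is precisely the regime $b\in\mathcal{B}$, so your appeal to it would need a small patch in any case. Your approach would work with that patch and the net, but the paper's product-measure trick is both simpler and avoids the issue.
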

\begin{proof}
    Note Lemma~\ref{le: BoundedSubdiff} implies that in order to quantify the Lipschitz constant of $\varphi_{b;n,m}^*$ on $P_{j,k}$, we only need to bound the magnitude of $\sup\{\|y\|:y\in \partial \varphi_{b;n,m}^*(P_{j,k})\}$. To this end, we first note that $\partial \varphi_{b;n,m}^*(v) = \partial^c (\|\cdot\|^2/2 - \varphi_{b;n,m}^*)(v)$ and $\|\cdot\|^2/2 - \varphi_{b;n,m}^*$ is obviously a c-concave function. Thus by Lemma \ref{le: ExtProperties}(iv) and Lemma \ref{le: SWpropertyofEmpiricalPUandPAb}, we can apply \citet[Theorem 11]{manole2021sharp} to obtain\footnote{We remark that the bound given below uses the probability mass on $\mathcal{B}_{w,3}^d$ whereas the original formulation in \citet[Theorem~11]{manole2021sharp} has $\mathcal{B}_{w,1}^d$ instead. We have used a slightly different radius here for the convenience of the subsequent argument. The exact radius is unimportant in the argument used in that theorem and the same proof will work verbatim with radius changed to 3. } that there exists a constant $C_0 > 0$ depends on $d$ such that for any $v \in P_{j,k}$ and $y \in \partial\varphi_{b;n,m}^*(v)$, we have 
    \begin{equation}
    \label{Eq:ynorm}
     \|y\| \leq  C_0 (2dE_{1, m})^{1/2}\Bigl\{(\|v\| +1) \vee \sup\limits_{w:\|v - w\|\leq 2}\Bigl[\log \Bigl(\frac{1}{P_n^{A(b)}(\mathcal{B}^d_{w,3})}\Bigr)\Bigr]^{1/2}\Bigr\}.
    \end{equation}
    Thus to upper bound the magnitude of $\partial\varphi_{b;n,m}^*(v)$ we only need to prove an anticoncentration bound for $P^{A(b)}_n$.
    
    We first note that from \eqref{ineq: AntiConcentrationofvarepsilon}, for any $0 \leq j \leq J_n$, $v \in P_j$ and $w$ such that $\|w - v\|\leq 2$, we have
    \begin{align}
        P^{\varepsilon}(\mathcal{B}_{w, 2}^d) \geq \int_{\mathcal{B}_{w, 2}^d\setminus \mathcal{B}_{0}^d} \gamma_1 \exp(-\gamma_2\|e\|^2) \, d e &\geq \frac{\pi^{d/2}(2^d - 1)}{\Gamma(d/2 + 1)}\gamma_1\exp\big(-2\gamma_2\|z\|^2 -50\gamma_2\bigr) \notag  \\ 
        &\geq 2 K_1 \exp(-2 \gamma_2 \ell_j^2), \label{ineq: LowerboundVarepsilononball}
    \end{align}
    where $K_1 \in (0, 1)$ is a constant depending on $d, \gamma_1, \gamma_2$. Observe that the right-hand side does not depend on $z$ or $w$, hence, we may take infimum over $v\in P_j$ and $w$ such that $\|w-v\|\leq 2$ and have the same lower bound. Hence, we have
    \begin{align*}
        P^\varepsilon \otimes  P^S (\mathcal{B}_{w, 2}^d \times \mathcal{B}_{0}^p) = P^\varepsilon(\mathcal{B}_{w, 2}^d)P^S(\mathcal{B}_{0}^p) \geq 2K_1^\prime \exp(-2 \gamma_2\ell_j^2),
    \end{align*}
    for some $K_1^\prime \in (0, 1)$ depends on $d, \gamma_1, \gamma_2, \sigma_1$ and $\alpha$, where the sub-Weibull assumption on $S$ has been exploited in the final inequality.

    On the other hand, let $\mathcal{B}^d:=\{\mathcal{B}^d_{a,r}: a\in\mathbb{R}^d, r > 0\}$ be the set of all balls in $\mathbb{R}^d$. Let $\tilde u=\sqrt\frac{160d\log n}{n}$ and define 
    \[
        \Upsilon_{1} := \Bigl\{ \sup_{B\in \mathcal{B}^d}|P_n^\varepsilon \otimes  P_n^S (B \times \mathcal{B}_{0,}^p) - P^\varepsilon \otimes  P^S (B \times \mathcal{B}_{0}^p) | < \tilde u \Bigr\}.
    \]
    Thus, since $\tilde u \lesssim K_1^\prime n^{-1/8} \leq K_1^\prime e^{-2 \gamma_2\ell_j^2}$ for $0 \leq j \leq J_n$, working on $\Upsilon_{1}$ we have $P_n^\varepsilon \otimes  P_n^S (\mathcal{B}_{w, 2}^d \times \mathcal{B}_{0}^d) \geq K_1^\prime \exp(-2 \gamma_2 \ell_j^2)$. Thus consider the event  
    \begin{align}
       \Upsilon_{2}:= \bigcap_{j = 0}^{J_{n}}\Bigl\{\inf_{v \in P_j}\inf_{w: \|v - w\|\leq 2} P_n^\varepsilon \otimes  P_n^S (\mathcal{B}_{w, 2}^d \times \mathcal{B}_{0}^p) \geq K_1^\prime\exp\bigl(-2\gamma_2\ell_j^2\bigr)\Bigr\} \notag,
    \end{align}
    we have $\Upsilon_{1} \subset \Upsilon_{2}$. Note the Vapnik--Chervonenkis (VC) dimension of $\mathcal{B}^d$ is no more than $d + 2$ \citep[See e.g.][Corollary 13.2]{devroye2013probabilistic}, by the VC-inequality {\citep[see][Theorem 2]{vapnikuniform}} we have
    \begin{align}
         \Prob(\Upsilon_{1}^c) \lesssim   n^{d+2}\exp(-n \tilde{u}^2/32) \leq n^{2-4d} \leq n^{-2}, \label{ineq: BoundUpsilon1n}
    \end{align}
    whence $\Prob(\Upsilon_{2}) \geq 1 - n^{-2}$.
    Thus working on $\Upsilon_{2} \cap \Gamma_1$, by $\|A(b)_i\| \leq \|S_i\| + \|\varepsilon_i\|$ for all $b \in \mathcal{B}$ and $i \in [n]$, we have $P_n^{A(b)}(\mathcal{B}_{w, 3}^d) \geq K_1^\prime \exp(-2 \gamma_2 \ell_j^2)$, and combining this with \eqref{Eq:ynorm}, we conclude that for any $1 \leq k \leq N$ and $0 \leq j \leq J_n$, there exits some sufficiently large constant $C_0^\prime>0$ depends on $d, \gamma_1, \gamma_2, \sigma_1, \alpha$ such that 
    \begin{align}
        \sup_{y \in \partial \varphi_{b;n,m}^*(P_{j,k})}\|y\| &\leq C_0 (2d E_{1, m})^{1/2}\Bigl(\ell_j + 1 +  \sqrt{2}\ell_j\gamma_2^{1/2} + \sqrt{\log(1/K_1^\prime)}\Bigr) \notag \\ 
        & \leq C_0^\prime E_{1, m}^{1/2}\ell_j \leq C_0^\prime \Bigl(2 + \sqrt{\frac{\log m}{m}}\Bigr)^{1/2}\ell_j \lesssim C_0^\prime \ell_j : = M_j.
        \label{ineq: varphi^*j<=J_n}
    \end{align} 
    
    When $j > J_n$, by Lemma \ref{le: ExtProperties}(iii) and \citet[Proposition 16]{manole2021sharp}, we only need to bound $L_{b;n,m}$. Note $L_{b;n,m} \leq  L_{n,m}:=2\max_{i \in [n]}\|\Sigma^{-1/2}X_i\|^2 + 2\max_{i \in [n]}\|\varepsilon_i\|^2 + 2\max_{j \in [m]}\|U_j\|^2$. Define $r_{n,m}:= 2\sigma_1^2 (4 \log n)^{2/\alpha}+ 2\sigma_2^2 (4 \log n)^{2/\beta} +  8d \log m$ and consider the event $\Upsilon_{3}:= \{L_{n,m} < r_{n,m}\}$. By part(i) of Proposition \ref{Prop:SW} and union bound, it follows that
    \begin{align}
    \Prob\bigl(\Upsilon_{3}^c \bigr) &\leq \Prob\bigl(\max_{i \in [n]}\|\Sigma^{-1/2}X_i\|^2 \geq \sigma_1^2 (4 \log n)^{2/\alpha} \bigr) + \Prob\bigl(\max_{i \in [n]}\|\varepsilon_i\|^2 \geq \sigma_2^2 (4 \log n)^{2/\beta} \bigr) \notag \\
    &\quad + \Prob\bigl(\max_{j \in [m]}\|U_j\|^2 \geq 8d\log m \bigr) \leq 4n^{-1} + 2m^{-1} \label{ineq: HighProbabilityofPhinm}.
    \end{align}
    Therefore on the event $\Upsilon_{3}$, by \Citet[Proposition 16]{manole2021sharp} we have that there exists a universal constant $C_1>0$ and a sufficiently large $C_1^\prime>0$ depends on $\sigma_1, \sigma_2$ such that for any $1\leq k \leq N$,
    \begin{align}
    \sup_{y \in \partial \varphi_{b;n,m}^*(P_{j,k})}\|y\|  \leq C_1(\ell_j + r_{n,m}) \leq C_1^\prime l_{n,m}\ell_j =: M_j, \quad \text{for all $j > J_n$}. \label{ineq: varphi^*j>J_n}
    \end{align}
    
    Putting \eqref{ineq: varphi^*j<=J_n} and \eqref{ineq: varphi^*j>J_n} together, for some constants $\tilde C_0, \tilde C_1 >0$ depend on $d, \gamma_1, \gamma_2, \sigma_1, \sigma_2, \alpha$, we have $\varphi^*_{b;n,m} -\varphi^*_{b;n,m}(0)\in \mathcal{C}_{M, U}$ on the event $\Upsilon^\prime := \Upsilon_{2}\cap \Gamma_1 \cap \Upsilon_{3}$, where $M = (M_j)_{j \geq 0}$ and $U = (U_j)_{j \geq 0}$ are chosen as 
    \begin{gather}
    M_j = \begin{cases}
    C_0^\prime \ell_j, &\quad  0\leq j\leq J_n \\
    C_1^\prime l_{n,m}\ell_j, &\quad j > J_n
    \end{cases}, \quad 
    U_j = \begin{cases}
    \tilde{C}_0 \ell_j^{3}, &\quad  0\leq j\leq J_n \\
    \tilde{C}_1 l_{n,m}\ell_j^{3}, &\quad j > J_n
    \end{cases}, \notag 
    \end{gather}
    as desired. 
    
    A similar argument can be applied to study the Lipschitz property of $\varphi_{b;n,m}$. Since $U\sim \mathcal{N}(0, I_d)$, for all $i \leq I_m$, $u \in P_i$ and all $w$ such that $\|w - u\|\leq 2$, we have 
    \[ P^U(\mathcal{B}_{w}^d) = \int_{\mathcal{B}_{w}^d} (2 \pi)^{-d/2} \exp(-\|y\|^2/2) \, d y \geq 2K_2e^{-\ell_i^2},  \] 
    where $K_2 \in (0, 1)$ is a constant depends only on $d$. Let $\tilde v = \sqrt{\frac{160 d \log m}{m}}$, and define 
    \begin{align*}
        \Upsilon_{4} : =\Bigl\{ \sup_{B \in \mathcal{B}^d}|P_m^U(B) - P^U(B)| < \tilde v \Bigr\},  \text{ and }
        \Upsilon_{5}:=\bigcap_{i = 0}^{I_m}\Bigl\{\inf_{u \in P_i}\inf_{w: \|u - w\|\leq 2}P_n^U(\mathcal{B}_{w}^d) \geq K_2 e^{-\ell_i^2} \Bigr\}.
    \end{align*}
     Then since $\tilde v \leq m^{-1/8} \leq K_2 e^{-\ell_{I_m}^2}$ we have $\Upsilon_{4} \subset \Upsilon_{5}$. Furthermore, by leveraging the VC-inequality again, we can deduce that $\Prob(\Upsilon_{4}^c)  \leq m^{-2}$, which implies that $\Prob(\Upsilon_{5}) \geq 1 - m^{-2}$. On the event $\Upsilon_{5} \cap \Gamma_2$, by applying \citet[Theorem 11]{manole2021sharp} and Lemma \ref{le: BoundedSubdiff} again we obtain that for $0 \leq i \leq I_m$, there exists constants $C_2>0$ depends on $d, \alpha, \beta$ and $ C_2^\prime>0$ depends on $d, \sigma, \alpha, \beta$ such that 
    \begin{align}
        \sup_{z \in \partial \varphi_{b;n,m}(u)}\|z\| &\leq  C_2\sigma (2E_{2, n})^{1/(\alpha \wedge \beta)} (2\ell_i + 1 + \sqrt{\log (1/K_2)}) \notag \\
        &\leq C_2^\prime E_{2, n}^{1/(\alpha \wedge \beta)}\ell_i \leq C_2^\prime \Bigl(2 + \sqrt{\frac{\log n}{n}}\Bigr)^{1/(\alpha \wedge \beta)} \ell_i \lesssim C_2^\prime \ell_i:= R_i. \label{ineq: LipConstantforileqI_m}
    \end{align}
    
    When $i > I_m$, since we still have $|\|u\|^2/2 - \varphi_{b;n,m}|  \leq L_{b;n,m} \leq L_{n,m} $ by Lemma \ref{le: ExtProperties}(iii), working on the event $\Upsilon_{3}$, there exists an absolute constant $C_3 >0$, and $C_3^\prime>0$ depends on $\sigma_1, \sigma_2$ such that for $1 \leq k \leq N$.
    \begin{align}
        \sup_{z \in \partial \varphi_{b;n,m}(P_{i, k})}\|z\| \leq C_3(\ell_i + r_{n,m}) \leq C_3^\prime l_{n,m}\ell_i:= R_i \quad\text{for $i > I_m$}. \label{ineq: LipConstantfori>I_m} 
    \end{align}
    Thus combine \eqref{ineq: LipConstantforileqI_m} and \eqref{ineq: LipConstantfori>I_m} we can deduce that there exists constants $\tilde C_2, \tilde C_3>0$ depend on $d, \alpha, \beta, \sigma_1, \sigma_2$ such that $\varphi_{b;n,m}-\varphi_{b;n,m}(0) \in \mathcal{C}_{R, T}$ on the event $\Upsilon = \Upsilon^\prime \cap \Upsilon_{5} \cap \Gamma_2$, where 
    \begin{gather}
        R_i = \begin{cases}
            C_2^\prime\ell_i, & \quad 0 \leq i \leq I_m \\
            C_3^\prime l_{n,m} \ell_i  ,& \quad i > I_m
        \end{cases}, \quad 
        T_i = \begin{cases}
            \tilde C_2  \ell_i^{3},  &\quad 0 \leq i \leq I_m \\
            \tilde C_3  l_{n,m} \ell_i^{3}, &\quad i > I_m
        \end{cases}. \notag 
    \end{gather}
    Finally, combining the controls on the probability of $\Upsilon_2, \Upsilon_3, \Upsilon_5, \Gamma_1$ and $\Gamma_2$, we arrive at the the upper bound $\Prob(\Upsilon) \geq 1 - n^{-2} - 4n^{-1} - 2 m^{-1} - m^{-2} -2(\log m)^{-1} - 2(\log n)^{-1} \geq 1 - 12(\log n)^{-1}$ when $m\geq n$, which completes the proof. 
\end{proof}

Now we are ready to introduce the core proposition in the proof. 

\begin{proposition}
\label{Prop:W2distdiff}
There exists a constant $C>0$ depending on $d, \alpha, \beta, \sigma_1, \sigma_2, \gamma_1, \gamma_2$ such that for any fixed $n,m \geq 1$,  the following inequality holds 
\begin{align}
\sup_{b\in\mathcal{B}} \bigl|\mathcal{W}_2^2(P^{A(b)},P^U) - \mathcal{W}_2^2(P_n^{A(b)},P_m^U)\bigr| \leq C (\log m)^{\frac{8}{2\wedge \alpha\wedge\beta}} \biggl(\sqrt\frac{p}{n}+\frac{1}{n^{2/d}}\biggr)\label{ineq: ControlonEmpiricalWassersteinDistance}
\end{align} 
with probability at least $1-29(\log n)^{-1}$.
\end{proposition}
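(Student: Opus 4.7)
The plan is to adapt the Kantorovich-dual strategy of Manole et al.\ (2021, Theorem 12) to a supremum over $b \in \mathcal{B}$. Using the identity $\tfrac{1}{2}\mathcal{W}_2^2(P,Q) = \tfrac{1}{2}\int\|x\|^2\,dP + \tfrac{1}{2}\int\|y\|^2\,dQ - \wip{P}{Q}$, the target quantity decomposes into (i) a difference of empirical and population second moments of $\|A(b)\|^2$ and $\|U\|^2$ and (ii) twice a difference of Wasserstein products. Part (i) is a standard empirical-process bound: the map $b\mapsto \|A(b)\|^2$ is a quadratic polynomial in the $dp$ entries of $b$ with sub-Weibull envelope (by Assumption \ref{assumption: sw}), so a Rademacher-complexity argument combined with the sub-Weibull concentration of Lemma \ref{le: ULLN} yields the $\sqrt{p/n}$ term up to polylog factors, uniformly in $b \in \mathcal{B}$; the $\|U\|^2$ term does not depend on $b$ and is handled identically.

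For part (ii), I would use the extended dual potentials $(\varphi^*_{b;n,m},\varphi_{b;n,m})$ from Proposition \ref{le: ExtProperties}, which attain equality for $\wip{P_n^{A(b)}}{P_m^U}$ and, by the universal inequality $\varphi^*_{b;n,m}(v)+\varphi_{b;n,m}(u)\geq u^\top v$, furnish a feasible dual pair for the population problem. Consequently,
\[
\wip{P_n^{A(b)}}{P_m^U}-\wip{P^{A(b)}}{P^U} \geq \int\varphi^*_{b;n,m}\,d(P_n^{A(b)}-P^{A(b)}) + \int\varphi_{b;n,m}\,d(P_m^U-P^U).
\]
A matching upper bound follows by constructing the analogous extended pair from the optimal population potentials: the proof of Proposition \ref{prop: PiecewiseLipvarphi} applies verbatim with $P^{A(b)}$ and $P^U$ replacing the empirical measures (the needed sub-Weibull bounds and density lower bounds are supplied by Assumptions \ref{assumption: sw} and \ref{assumption: anticoncen}), so the population-side extended potentials also lie in the piecewise-Lipschitz classes $\mathcal{C}_{M,U}$ and $\mathcal{C}_{R,T}$. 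On the event $\Upsilon$ of Proposition \ref{prop: PiecewiseLipvarphi}, this reduces $|\wip{P_n^{A(b)}}{P_m^U}-\wip{P^{A(b)}}{P^U}|$ to two IPMs over the fixed, $b$-independent classes $\mathcal{C}_{M,U}$ and $\mathcal{C}_{R,T}$.

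Each IPM is bounded by decomposing along the dyadic partition $\{P_{j,k}\}$ from the proof of Proposition \ref{prop: PiecewiseLipvarphi}: on each cube the restriction of the class consists of $M_j$-Lipschitz convex functions of value at most $U_j$ on a cube of side $\ell_j$. Bronshtein-type metric-entropy bounds for bounded convex Lipschitz classes combined with an empirical-process maximal inequality give a contribution $\lesssim M_j \ell_j \cdot n^{-2/d}$ from near-origin cubes ($j\leq J_n$); for far cubes ($j > J_n$), the sub-Weibull tails of $P^{A(b)}$ and $P^U$ suppress the mass of $P_{j,k}$ exponentially, and summing a geometric series against $U_j$ (resp.\ $T_i$) produces only polylog corrections. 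Tracking the $l_{n,m}$ inflation of the Lipschitz/value bounds from Proposition \ref{prop: PiecewiseLipvarphi} together with the sub-Weibull truncation radii from Lemma \ref{le: SWpropertyofEmpiricalPUandPAb} yields the overall polylog factor $(\log m)^{8/(2\wedge\alpha\wedge\beta)}$.

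The principal obstacle is obtaining the bound uniformly in $b$ for the $A(b)$-side IPM, since although $\mathcal{C}_{M,U}$ is $b$-free, the measure $P^{A(b)}$ varies with $b$. I would handle this via a $\delta$-net argument: rewrite $\int f\,d(P_n^{A(b)}-P^{A(b)}) = \int f((b^*-b)x+e)\,d(P_n^{(X,\varepsilon)}-P^{(X,\varepsilon)})(x,e)$, cover $\mathcal{B}$ in matrix Mahalanobis norm by a $\delta$-net of cardinality $O(\delta^{-dp})$, apply a union bound to transfer the per-$b$ IPM control from the previous step, and absorb the discretization error using $|f(A(b)_i)-f(A(b')_i)|\leq M_j\delta\|X_i\|$ whenever $\|b - b'\|_\Sigma\leq \delta$ and $A(b)_i \in P_{j,k}$. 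Choosing $\delta\asymp n^{-1/2}$, the union-bound cost contributes a $\sqrt{dp\log n /n}$ term that is absorbed into the final $\sqrt{p/n}\,(\log m)^{8/(2\wedge\alpha\wedge\beta)}$ rate. Intersecting the resulting event with $\Upsilon$ and the finitely many additional concentration events used along the way yields a total failure probability bounded by $29(\log n)^{-1}$.
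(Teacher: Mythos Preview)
Your plan is correct and follows the same Kantorovich-dual scaffolding as the paper (Proposition~\ref{le: ExtProperties}, Proposition~\ref{prop: PiecewiseLipvarphi}, Bronshtein entropy), but the execution differs in two places worth flagging. First, for uniformity in $b$ on the $A(b)$-side, the paper does not use a $\delta$-net over $\mathcal{B}$: after the same change of variables $v=T_b(s,e)$ you describe, it absorbs the $b$-dependence into a single function class $\mathcal{F}=\{(\varphi\circ T_b)\one_{\mathcal{B}_0^{d+p}}:b\in\mathcal{B},\,\varphi\in\mathrm{Lip}_{1,1}(\mathcal{B}_0^d)\}$ and bounds its Rademacher complexity in one shot (Lemma~\ref{le: RademacherComplexityofR_n}) by multiplying Bronshtein's entropy for convex Lipschitz functions with the $(1+2/\delta)^{dp}$ covering of the linear maps $\{T_b\}$; this yields $\vartheta_n+\sqrt{p/n}$ directly without a separate union bound. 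Your explicit net recovers the same rate but is a coarser stand-in for this chaining. Second, the paper does not sum over the dyadic shells: on the event $\{\max_i\|(\varepsilon_i,S_i)\|\leq\kappa\}$ with $\kappa$ polylogarithmic, all data sit in one ball, so a single pair of constants $M_{j_n},U_{j_n}$ suffices (Lemma~\ref{le: Controlvarphi*}), and the tail outside is handled by the crude bound $|\varphi_{b;n,m}^*(v)|\leq\|v\|^2/2+L_{b;n,m}$ from Proposition~\ref{le: ExtProperties}(iii) together with Lemma~\ref{Lem:TruncatedMoment}. Finally, for the other inequality the paper does not rerun Proposition~\ref{prop: PiecewiseLipvarphi} on population measures; since $P^{A(b)}$ inherits the anti-concentration (Lemma~\ref{le: AnticoncentrationforSumofTwoRVs}), \citet[Theorem~11]{manole2021sharp} gives the global bound $\|\nabla\psi_b^*(v)\|\leq C(\|v\|+1)$ outright, and $\sup_{b\in\mathcal{B}}G_{b;n,m}$ is then controlled by a bounded-difference/McDiarmid argument rather than an IPM over $\mathcal{C}_{M,U}$ (Lemma~\ref{le: BoundforGbnm}). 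Your symmetric IPM treatment would also work; the paper's asymmetric route is just shorter.
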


\begin{proof}
    Note part (i) and part (iii) of Lemma \ref{le: ExtProperties} implies that $(\|\cdot\|^2 - \varphi^*_{b;n,m}, \|\cdot\|^2 - \varphi_{b;n,m})$ is a feasible pair to the duality of the Kantorovich problem between $P^{A(b)}$ and $P^U$. This yields that  
    \begin{align*}
        \frac{1}{2}\mathcal{W}_2^2(P^{A(b)}, P^U) &\geq \int \biggl(\frac{\|v\|^2}{2} - \varphi_{b;n,m}^* (v)\biggr) dP^{A(b)}(v) + \int \biggl(\frac{\|u\|^2}{2} - \varphi_{b;n,m} (u)\biggr) dP^{U}(u) \\
        &= \int \frac{\|v\|^2}{2} dP_n^{A(b)}(v) + \int \frac{\|u\|^2}{2} dP_m^{U}(u) \\
        &\quad + \int \frac{\|v\|^2}{2} (dP^{(A(b)} - dP_n^{A(b)})(v)  +  \int \frac{\|u\|^2}{2} (dP^U - dP_m^{U})(u) \\
        & \quad - \Bigl\{\int \varphi_{b;n,m}^* (v) dP_n^{A(b)}(v) + \varphi_{b;n,m} (u) dP_m^U(u)\Bigr\} \\
        & \quad - \Bigl\{\int \varphi_{b;n,m}^* (v) d \bigl(P^{A(b)} - P_n^{A(b)}\bigr)(v) + \int \varphi_{b;n,m} (u) d \bigl(P^{U} - P_m^U\bigr)(u)\Bigr\}.
    \end{align*}
       By the definition of $(\varphi_{b;n,m}, \varphi_{b;n,m}^*)$, we have $\mathcal{W}_2^2(P_n^{A(b)}, P_m^U) = \int \bigl(\frac{\|v\|^2}{2} - \varphi_{b;n,m}^* (v) \bigr) dP_n^{A(b)}(v) + \int \bigl( \frac{\|u\|^2}{2} -  \varphi_{b;n,m} (u) \bigr) dP_m^{U}(u) $. Consequently, from the above display, we deduce that 
    \begin{align}
       \frac{1}{2}\mathcal{W}_2^2(P_n^{A(b)}, P_m^U)& - \frac{1}{2}\mathcal{W}_2^2(P^{A(b)}, P^U)\nonumber \\
       &\leq \underbrace{\int \varphi_{b;n,m}^* (v) d \bigl(P^{A(b)} - P_n^{A(b)}\bigr)(v) + \int \varphi_{b;n,m} (u) d \bigl(P^{U} - P_m^U\bigr)(u)}_{=: E_{b;n,m}} \notag \\
       & \qquad + \underbrace{\int \frac{\|v\|^2}{2} d \bigl(P_n^{A(b)} - P^{A(b)}\bigr)(v) + \int \frac{\|u\|^2}{2} d \bigl(P_m^{U} - P^U\bigr)(u)}_{=:F_{b;n,m}}. \label{Eq:EmpiricalPopulationWassersteinDistUpper} 
    \end{align}

    On the other hand, define $\Psi_b := \{(f, g) \in L^1(P^{A(b)}) \times L^1(P^U): v^T u \leq f(v) + g(u), \ \forall (v, u) \in \mathrm{Supp}(P^{A(b)}) \times \mathrm{Supp}(P^U) \}$, then Theorem \ref{thm: K-SOptimalityCriterion} implies that for any $b \in \mathcal{B}$ there exists a conjugate pair $(\psi_{b}^*, \psi_b)$ such that 
    \begin{align*}
        (\psi_{b}^*, \psi_{b}) &= \argmin_{f, g \in \Psi_b} \int f \, dP^{A(b)} + \int g \, d P^U, \\
        \frac{1}{2}\mathcal{W}_2^2(P^{A(b)}, P^U) &= \int \|v\|^2/2 - \psi_b^*(v) \, dP^{A(b)}(v) + \int \|u\|^2/2 - \psi_b(u) \, d P^U(u). 
    \end{align*}
    Since $\Psi_b \subseteq \tilde \Phi_b$, $(\|v\|^2/2 - \psi_b^*(v), \|u\|^2/2 - \psi_b(u))$ is a feasible solution for the duality between $P_n^{A(b)}$ and $P_m^U$. Therefore, we can rerun the previous derivation and obtain that 
    \begin{align}
        \frac{1}{2}\mathcal{W}_2^2(P_n^{A(b)}, P_m^U) &- \frac{1}{2}\mathcal{W}_2^2(P^{A(b)}, P^U)\nonumber \\
        &\geq \underbrace{\int \psi_{b}^* (v) d \bigl(P^{A(b)} - P_n^{A(b)}\bigr)(v) + \int \psi_{b} (u) d \bigl(P^{U} - P_m^U\bigr)(u)}_{=: G_{b;n,m} } \notag  \\
       & \qquad + \int \frac{\|v\|^2}{2} d \bigl(P_n^{A(b)} - P^{A(b)}\bigr)(v) + \int \frac{\|u\|^2}{2} d \bigl(P_m^{U} - P^U\bigr)(u). \label{Eq:EmpiricalPopulationWassersteinDistLower} 
     \end{align} 
     Write the first two terms and the last two terms of \eqref{Eq:EmpiricalPopulationWassersteinDistUpper} as $E_{n,m}$ and $F_{n,m}$ respectively, and write the first two terms of \eqref{Eq:EmpiricalPopulationWassersteinDistLower} as $G_{n,m}$. Then combining \eqref{Eq:EmpiricalPopulationWassersteinDistUpper} and \eqref{Eq:EmpiricalPopulationWassersteinDistLower}, for $\vartheta_k$ defined in~\eqref{ineq: vartheta_k}, we have 
     \begin{align}
     \sup_{b \in \mathcal{B}}\Bigl|\frac{1}{2}\mathcal{W}_2^2(P_n^{A(b)}, P_m^U) - \frac{1}{2}\mathcal{W}_2^2&(P^{A(b)}, P^U)\Bigr| \leq \sup_{b \in \mathcal{B}}|E_{b;n,m}| + 2\sup_{b \in \mathcal{B}} |F_{b;n,m}| + \sup_{b \in \mathcal{B}}|G_{b;n,m}| \notag \\
     &\lesssim  (\log m)^{\frac{6}{2\wedge \alpha\wedge\beta}}\bigl(\vartheta_n + \sqrt{\frac{p}{n}} + \sqrt{\frac{\log n}{n}} + \vartheta_m + \sqrt{\frac{\log m}{m}}\bigr),\notag \\
     &\leq (\log m)^{\frac{8}{2\wedge\alpha\wedge \beta}} \biggl(\sqrt\frac{p}{n}+\frac{1}{n^{2/d}}\biggr).
     \end{align}
     with probability at least $1 - 29(\log n)^{-1}$, where we have used Lemmas~\ref{le: Controlvarphi*}, \ref{le: ULLN} and \ref{le: BoundforGbnm} to bound each of the three terms in the penultimate inequality. 
\end{proof}

\begin{lemma}\label{le: Controlvarphi*}
There exists $C>0$, depending only on $d, \alpha, \beta, \gamma_2, \sigma_1, \sigma_2$, and an event $\Omega$ with probability at least $1 - 18(\log n)^{-1}$, such that on $\Omega$, for any $b \in \mathcal{B}$,  we have 
    \begin{gather*}
        \biggl|\int \varphi_{b;n,m}^* (v) \, d \bigl(P^{A(b)} - P_n^{A(b)}\bigr)(v)\biggr| \leq C(\log m)^{\frac{6}{2 \wedge \alpha \wedge \beta}}\biggr( {\vartheta}_n + \sqrt\frac{p}{n} + \sqrt{\frac{2 \log n}{n}}\biggr)\\
        \biggl|\int \varphi_{b;n,m} (u) \, d \bigl(P^{U} - P_m^U\bigr)(u) \Bigr|\leq C (\log m)^{\frac{6}{2 \wedge \alpha \wedge \beta}} \biggl(\vartheta_m + \sqrt{\frac{2 \log m}{m}} \biggr),
    \end{gather*}
    where $\vartheta_n$ is defined as \eqref{ineq: vartheta_k}.
\end{lemma}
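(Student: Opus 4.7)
The plan is to work on the high-probability event $\Upsilon$ furnished by Proposition~\ref{prop: PiecewiseLipvarphi}, on which $\varphi_{b;n,m}^* - \varphi_{b;n,m}^*(0) \in \mathcal{C}_{M,U}$ and $\varphi_{b;n,m} - \varphi_{b;n,m}(0) \in \mathcal{C}_{R,T}$ simultaneously for every $b\in\mathcal{B}$, with both function classes independent of $b$. Since $P^{A(b)}$, $P_n^{A(b)}$, $P^U$ and $P_m^U$ are all probability measures, subtracting the scalar $\varphi_{b;n,m}^*(0)$ (respectively $\varphi_{b;n,m}(0)$) does not alter the integral of interest. Thus the two bounds reduce to uniformly controlling the empirical-process suprema
$$\sup_{b\in\mathcal{B}}\sup_{\psi\in\mathcal{C}_{M,U}}\biggl|\int\psi\,d(P^{A(b)}-P_n^{A(b)})\biggr| \quad\text{and}\quad \sup_{\psi\in\mathcal{C}_{R,T}}\biggl|\int\psi\,d(P^U-P_m^U)\biggr|.$$
The second supremum is notably simpler because $P^U$ does not depend on $b$, so I focus on the first and treat the second analogously at the end.

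I would then split the integral along the annular partition $\mathbb{R}^d=\bigcup_{j\geq0}P_j$ into a bulk piece over $L_{J_n}=\bigcup_{j\leq J_n}P_j$ and a tail piece over $\bigcup_{j>J_n}P_j$. For the tail, the sub-Weibull parameters of $A(b)=(b^*-b)X+\varepsilon$ are uniform over $b\in\mathcal{B}$ (since $\|b^*-b\|_\Sigma<1$), so Proposition~\ref{prop: swequivalent} yields $P^{A(b)}(P_j)\lesssim\exp(-c\ell_j^{\alpha\wedge\beta})$; combined with the envelope bound $U_j\asymp l_{n,m}\ell_j^3$ from~\eqref{MandU}, summation over $j>J_n$ contributes only a geometrically decaying remainder. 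For the empirical measure $P_n^{A(b)}$, on $\Upsilon_3\subset\Upsilon$ every sample $A(b)_i$ has norm bounded by $r_{n,m}^{1/2}$ uniformly in $b$, so only finitely many annuli receive empirical mass, and a routine union-bound concentration gives a tail contribution of order $(\log m)^{6/(2\wedge\alpha\wedge\beta)}\sqrt{\log n/n}$.

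For the bulk, I would rewrite $\int\psi\,dP^{A(b)}=\E\psi\bigl((b^*-b)X+\varepsilon\bigr)$ and study the composite class $\{(x,e)\mapsto \psi((b^*-b)x+e):b\in\mathcal{B},\,\psi\in\mathcal{C}_{M,U}|_{L_{J_n}}\}$. On each cube $P_{j,k}$ with $j\leq J_n$ the restriction of $\psi$ is a uniformly $M_{J_n}$-Lipschitz, uniformly bounded, convex function on a cube in $\mathbb{R}^d$; the classical Bronshtein-type bracketing-entropy bound $\log N_{[\,]}(\epsilon,\cdot)\lesssim\epsilon^{-d/2}$ for such classes, fed through Dudley's chaining, produces the $n^{-2/d}$ rate encoded in $\vartheta_n$ via~\eqref{ineq: vartheta_k}. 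To upgrade to uniformity over $b$, I would construct an $\epsilon$-net of $\mathcal{B}\subset\mathbb{R}^{dp}$ with $\log$-cardinality $\lesssim dp\log(1/\epsilon)$, control the centered process at each net point by the above chaining plus a Talagrand-type concentration step, and interpolate between net points using the Lipschitz estimate $|\psi((b^*-b)x+e)-\psi((b^*-b')x+e)|\leq M_{J_n}\|(b-b')x\|$. Optimising $\epsilon$ in the resulting trade-off produces the additive $\sqrt{p/n}$ term, while the polylogarithmic prefactor $(\log m)^{6/(2\wedge\alpha\wedge\beta)}$ absorbs the Lipschitz and envelope scales $M_{J_n}$, $U_{J_n}$ together with the sub-Weibull parameters of $P_n^{A(b)}$ and $P_m^U$ quantified in Lemma~\ref{le: SWpropertyofEmpiricalPUandPAb}.

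The main obstacle is reconciling the uniformity over $b$ with the sharper-than-Lipschitz convex-function entropy that delivers the $n^{-2/d}$ rate: treating the joint class $(b,\psi)$ by a naive Lipschitz entropy bound would degrade the bulk rate to $n^{-1/d}$ and entangle the factor $\sqrt{p}$ with it, whereas an $\epsilon$-net argument over $\mathcal{B}$ combined separately with the convex-class chaining decouples the two sources of complexity into the additive summands $\vartheta_n$ and $\sqrt{p/n}$. The second inequality follows from an identical template over the $U$-partition $\{P_{i,k}\}$ with parameters $(R,T)$ from~\eqref{RandT}; uniformity over $\mathcal{B}$ is not needed since $P^U,P_m^U$ are $b$-free, so the $\sqrt{p/n}$ term is absent and is replaced by the corresponding empirical-Wasserstein rate $\vartheta_m$ against the (sub-)Gaussian reference.
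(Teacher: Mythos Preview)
Your high-level strategy—work on the event $\Upsilon$ from Proposition~\ref{prop: PiecewiseLipvarphi}, center by $\varphi^*_{b;n,m}(0)$, split into bulk and tail, use Bronshtein entropy for bounded convex Lipschitz functions on the bulk, and pick up the additive $\sqrt{p/n}$ from uniformity over $b\in\mathcal{B}$—matches the paper's argument. There is, however, a genuine gap in your choice of bulk/tail threshold.

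You split at $J_n$, the threshold in Proposition~\ref{prop: PiecewiseLipvarphi}. But $J_n$ is fixed by the anti-concentration parameter $\gamma_2$ of $\varepsilon$, giving $\ell_{J_n}\asymp\sqrt{\log n}$. The sub-Weibull tail of $A(b)$ at this radius is only $\exp\bigl(-c(\log n)^{(\alpha\wedge\beta)/2}\bigr)$, which for $\alpha\wedge\beta<2$ decays more slowly than any negative power of $n$. Multiplied by your envelope $U_{J_n+1}\asymp l_{n,m}(\log n)^{3/2}$, the first term of your ``geometrically decaying remainder'' is of order $(\log m)^{c_1}\exp\bigl(-c_2(\log n)^{(\alpha\wedge\beta)/2}\bigr)$, which dominates $\vartheta_n+\sqrt{p/n}+\sqrt{\log n/n}$ whenever $\alpha\wedge\beta<2$. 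The series does decay geometrically in $j$, but it starts too large.

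The paper instead truncates at $\kappa=\rho(4\log n)^{1/(\alpha\wedge\beta)}$ in $(S,\varepsilon)$-space (annular index $j_n>J_n$), chosen precisely so that $\mathbb{P}(\|(S,\varepsilon)\|>\kappa)\leq 2n^{-2}$; the tail then contributes $O(1/n)$ via Proposition~\ref{le: ExtProperties}\ref{suble: Int} and Lemma~\ref{Lem:TruncatedMoment}. On the enlarged bulk ball of radius $\kappa$ one must use the Lipschitz constant $M_{j_n}$ from the $j>J_n$ branch of~\eqref{MandU}, which carries the extra $l_{n,m}$ factor; but $M_{j_n}\vee U_{j_n}\lesssim(\log m)^{5/(2\wedge\alpha\wedge\beta)}$ is still polylogarithmic, and together with one further factor of $\kappa$ from rescaling this is exactly what produces the $(\log m)^{6/(2\wedge\alpha\wedge\beta)}$ prefactor in the statement.

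A secondary point: rather than an $\epsilon$-net on $\mathcal{B}$ plus a union bound over Talagrand events, the paper absorbs both the linear map $T_b$ and the Lipschitz function $\varphi$ into a single class $\mathcal{F}$ (display~\eqref{eq: Fb}) and bounds its metric entropy by the product of Bronshtein's convex-function estimate and the operator-norm covering of contractions, yielding $\log N(\delta,\mathcal{F})\lesssim \delta^{-d/2}+dp\,\delta^{-1}$; Dudley's integral then gives $\vartheta_n+\sqrt{p/n}$ after a single concentration step (Lemma~\ref{le: RademacherComplexityofR_n}). Your net-plus-interpolation route should also reach the correct rate, but the paper's packaging is cleaner and avoids the union bound.
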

\begin{proof}
    We note that the value of the integrals on the left-hand side of both inequalities will not change if we add any constant to the functions $\phi^*_{b;n,m}$ and $\phi_{b;n,m}$. Hence, we may assume without loss of generality throughout this proof that $\phi^*_{b;n,m}(0) = \phi_{b;n,m}(0) = 0$. 
    

    Note that due to the sub-Weibull assumptions on $\varepsilon$ and $S$, and combining with Proposition \ref{prop: swsum}(ii), we have $(\varepsilon, S) \sim \sw{\rho}{\alpha \wedge \beta}$ for $\rho>0$ depending only on $\sigma_1$ and $\sigma_2$. Then let $\kappa =  \rho (4\log n)^{1/(\alpha \wedge \beta)}$ and $\Omega_{1} := \{\max_{1 \leq i \leq n}\|(\varepsilon_i, S_i)\| \leq \kappa\}$, and by Proposition \ref{prop: swequivalent}(i), we have
    \begin{gather*}
        \Prob(\Omega_{1}^c) \leq n \Prob(\|(\varepsilon, S)\| \geq \kappa) \leq 2n \exp\biggl\{-\frac{1}{2}(\kappa/\rho)^{\alpha \wedge \beta}\biggr\} \leq  \frac{2}{n}. 
    \end{gather*} 

    For any $b \in \mathcal{B}$, define the linear projection $T_b:  \mathbb{R}^p\times \mathbb{R}^{d}  \to \mathbb{R}^d$ such that 
    \begin{align}
        T_b(s, e) := (b^* - b)\Sigma^{1/2}s + e. \label{Map: T_b}
    \end{align}
    Write $E_b = \{T_b(s, e) \in \mathbb{R}^d: (s, e) \in \mathcal{B}_{0, \kappa}^{d + p}\}$. Working on the event $\Omega_1$ and observing that $\|T_b\|_{\ope} \leq 1$ for any $b \in \mathcal{B}$, we have
       \begin{align}
        \int_{\mathbb{R}^d \setminus E_b} \varphi_{b;n,m}^* (v) &\,d \bigl(P^{A(b)} - P_n^{A(b)}\bigr)(v)   =\int_{\mathbb{R}^{d+p} \setminus \mathcal{B}_{0, \kappa}^{d+p}} \varphi_{b;n,m}^* \circ T_b(e,s) \,  d P^{\varepsilon}\otimes P^S(e,s) \notag \\
        &\stackrel{(a)}{\leq} \int_{\mathbb{R}^{d+p} \setminus \mathcal{B}_{0, \kappa}^{d+p}} \biggl(\frac{\|T_b(e,s)\|^2}{2} + r_{n,m}\biggr) \, d (P^\varepsilon\otimes P^S)(e,s) \notag \\
        &\leq \int_{\mathbb{R}^{d+p} \setminus \mathcal{B}_{0, \kappa}^{d+p}} \biggl(\frac{\|(e,s)\|^2}{2} + r_{n,m}\biggr) \, d (P^\varepsilon\otimes P^S)(e,s) \notag \\
        &\stackrel{(b)}{\leq} C_{4} e^{-\frac{1}{4}(\frac{\kappa}{\rho})^{\alpha \wedge \beta}} + \frac{2r_{n,m}}{n^2} \lesssim \frac{C_4}{n}, \label{ineq: varphiRestricted}
    \end{align}
    where we use part \ref{suble: Int} of Proposition \ref{le: ExtProperties} to obtain (a) and Lemma \ref{Lem:TruncatedMoment} to obtain (b) and $C_4>0$ is a constant only depending on $d, \sigma_1, \sigma_2, \alpha, \beta$. 


    On the other hand, for $\mathcal{X} \subseteq \mathbb{R}^d$, we define $\mathrm{Lip}_{1, 1}(\mathcal{X}) := \{f \in \mathrm{Lip}_1(\mathcal{X}): \sup_{x\in\mathcal{X}} |f(x)| \leq 1\}$ to be the class of $1$-Lipschitz functions on $\mathcal{X}$ uniformly bounded by $1$. Consider the following function class
    \begin{align}
          \mathcal{F} := \Bigl\{(s,e)\mapsto (\varphi \circ T_b)(s,e) \one_{\mathcal{B}_{0}^{d + p}}(s,e) : b \in \mathcal{B}, \ \varphi \in \mathrm{Lip}_{1, 1}(\mathcal{B}_{0}^d) \Bigr\}. \label{eq: Fb}
     \end{align}

    
    Let $j_{n} = (J_n + 1) + \lceil \log_3 (\rho(4 \log n)^{1/(\alpha \wedge \beta)}/ d^{1/2})\rceil$. Then we have $3^{j_n} \sqrt{d} \geq \kappa$, which implies that $\mathcal{B}_{0, \kappa}^d \subseteq \bigcup_{j =0}^{j_{n}} \bigcup_{k = 1}^N P_{j, k}$ for $P_{j,k}$ defined before Lemma~\ref{le: SWpropertyofEmpiricalPUandPAb}. Let $\Upsilon$ be the event with probability $1-12(\log n)^{-1}$ on which Proposition~\ref{prop: PiecewiseLipvarphi} holds. Then, from Proposition \ref{prop: PiecewiseLipvarphi}, we have $ \varphi^*_{b;n,m}|_{\mathcal{B}^d_{0, \kappa}}$ is Lipschitz continuous with parameter $M_{j_n}$ and upper bound $U_{j_{n}}$, for $M_j$ and $U_j$ are defined in \eqref{MandU}. Specifically, since $j_n > J_n$, from \eqref{MandU}, there exists $C_5 >0$, depending only on $d, \alpha, \beta, \sigma_1, \sigma_2, \gamma_2$, such that $M_{j_{n}} \vee U_{j_{n}} \leq C_5 (\log m)^{\frac{5}{2 \wedge \alpha \wedge \beta}}$. Whence, observing that 
   \[
      \frac{\varphi^*_{b;n,m}\bigl(\kappa T_b(\cdot, \cdot)\bigr)\one_{\mathcal{B}_{0}^{d + p}}(\cdot, \cdot)}{C_5 (\log m)^{\frac{5}{2 \wedge \alpha \wedge \beta}}}  \in \mathcal{F} ,
   \]
   we deduce that 
       \begin{align}
        \int_{E_b}\frac{\varphi_{b;n,m}^*(v)}{C_5 \kappa (\log m)^{\frac{5}{2 \wedge \alpha \wedge \beta}} }&\, d \bigl(P^{A(b)} - P_n^{A(b)}\bigr)(v)  \notag \\
        & =\int_{\mathcal{B}_{0, \kappa}^{d+p}} \frac{\varphi_{b;n,m}^*( T_b(s,e))}{C_5 \kappa(\log m)^{\frac{5}{2 \wedge \alpha \wedge \beta}} }\, d( P^\varepsilon\otimes P^S - P^\varepsilon_n \otimes P^S_n)(e, s) \notag    \\
        & = \int_{\mathcal{B}_{0, 1}^{d+p}} \frac{\varphi_{b;n,m}^* (\kappa T_b(s,e))}{C_5 (\log m)^{\frac{5}{2 \wedge \alpha \wedge \beta}} }\, d( P^\varepsilon\otimes P^S - P^\varepsilon_n \otimes P^S_n)(e, s) \notag    \\
        & \leq \sup_{f \in \mathcal{F}}\Bigl\{ \int f(s, e) \, d( P^\varepsilon \otimes P^S  - P_n^\varepsilon  \otimes P_n^S)(e, s) \Bigr\}. \label{ineq: BoundonIntofvarphi*} 
    \end{align} 

    By Lemma \ref{le: RademacherComplexityofR_n} and \citet[Theorem 4.10]{wainwright2019high},  there exists an event $\Omega_2$ with probability at least $1 - n^{-1}$, on which for some constant $C'>0$, depending only on $d$, we have 
    \begin{equation}
    \label{Eq:Omega2}
    \sup_{f \in \mathcal{F}}\Bigl| \int f \, d (P^\varepsilon \otimes P^S  - P_n^\varepsilon  \otimes P_n^S) \Bigr| \leq 2C'\Bigl(\vartheta_n + \sqrt{\frac{p}{n}}\Bigr)+ \sqrt{\frac{2\log n}{n}}.
    \end{equation}
    Combining~\eqref{ineq: varphiRestricted}, \eqref{ineq: BoundonIntofvarphi*} and~\eqref{Eq:Omega2}, we have on event $\Upsilon\cap\Omega_1 \cap \Omega_2 $ that
    \begin{align*}
        \int \varphi_{b;n,m}^*(v) \, d \bigl(P^{A(b)} - P_n^{A(b)}\bigr)(v) &\leq C_5\kappa (\log m)^{\frac{5}{2 \wedge \alpha \wedge \beta}}\biggl(2C'\Bigl(\vartheta_n + \sqrt{\frac{p}{n}}\Bigr)+ \sqrt{\frac{2\log n}{n}}\biggr)+ \frac{C_{4}}{n} \\
        &\leq C_5^\prime(\log m)^{\frac{6}{2 \wedge \alpha \wedge \beta}}\biggr( {\vartheta}_n + \sqrt\frac{p}{n} + \sqrt{\frac{2 \log n}{n}}\biggr), 
    \end{align*}
    for some $C_5^\prime >0$ depending only on $d, \alpha, \beta, \sigma_1, \sigma_2, \gamma_2$. A symmetric argument shows that on $\Upsilon\cap\Omega_1\cap\Omega_2$,  $\int -\varphi^*_{b;n,m}d(P^{A(b)}-P_n^{A(b)})$ can be controlled by the same upper bound. This establishes the first claim of the lemma. 

    A similar argument is applied to obtain the bound for the empirical process of $\varphi_{b;n,m}$. Let $\gamma = 2\sqrt{2d\log m}$, and define $\Omega_{3}:= \{\max_{1 \leq i \leq m} \|U_i\| \leq \gamma\}$. Then by a union bound we have $\Prob(\Omega_{3}^c) \leq m \Prob\bigl(\|U_1\| \geq \gamma\bigr) \leq 2m \exp(-\frac{1}{2}\frac{\gamma^2}{2d}) \leq \frac{2}{m}$. Working on $\Omega_{3}$ we deduce that for some absolute constant $C_6>0$,

    \begin{align}
        \int_{\mathbb{R}^d \setminus \mathcal{B}_{0, \gamma}^d} \varphi_{b;n,m}(u)\, d(P^U - P_m^U)(u) &= \int_{\mathbb{R}^d \setminus \mathcal{B}_{0, \gamma}^d} \varphi_{b;n,m}(u) \, dP^U(u) \notag \\
        &\stackrel{(c)}{\leq} \int_{\mathbb{R}^d \setminus \mathcal{B}_{0, \gamma}^d} \frac{\|u\|^2}{2} \, dP^U(u)  \stackrel{(d)}{\leq} \frac{C_6}{m^2}. \label{ineq: controlvarphiint}
    \end{align}
    In the above, we use part \ref{suble: Int} in the Proposition \ref{le: ExtProperties} to obtain (c) and Lemma \ref{Lem:TruncatedMoment} in inequality (d).
    
    Define 
    \begin{align}
        \mathcal{H} = \{g \one_{\mathcal{B}_{0}^d}: g \in \mathrm{Lip}_{1, 1}(\mathcal{B}_{0}^d)\}. \label{eq: mathcalH}
    \end{align}
    Let $i_{m} := (I_m + 1) + \ceil*{ \frac{1}{2}\log_3( 8 d \log m ) }$. Observe that $3^{i_{m}} \sqrt{d} \geq \gamma$ thus we have $\mathcal{B}^d_{0, \gamma} \subset \bigcup_{i = 0}^{i_{m}} \bigcup_{k=1}^N P_{i,k}$. Since $\varphi_{b;n,m} \in \mathcal{C}_{R, T}$ on $\Upsilon$ according to Proposition \ref{prop: PiecewiseLipvarphi}, we have that $\varphi_{b;n,m} |_{\mathcal{B}_{0, \gamma}^d}$ is bounded and Lipschitz continuous with upper bound $T_{i_{m}}$ and Lipshictz constant $R_{i_{m}}$ as defined in \eqref{RandT}. Moreover, by the explicit display of \eqref{RandT}, there exists a constant $C_{7}$ depends on $d, \sigma_1, \sigma_2, \alpha, \beta, \gamma_2$ such that $R_{i_{m}} \vee T_{i_{m}} \leq C_7 (\log m)^{\frac{5}{2 \wedge \alpha \wedge \beta}}$. Therefore, on $\Upsilon$, we have 
    \[
    \frac{\varphi_{b;n,m}(\langle \gamma, \cdot \rangle ) }{C_7(\log m)^{\frac{5}{2 \wedge \alpha \wedge \beta}}}\one_{\mathcal{B}_{0, 1}^d}(\cdot ) \in \mathcal{H},
     \]
     and consequently,
    \begin{align}
        \frac{1}{C_7\gamma (\log m)^{\frac{5}{2 \wedge \alpha \wedge \beta}}}\int_{\mathcal{B}_{0, \gamma}^d} \varphi_{b;n,m}(u)  \, d(P^U - P_m^U)(u) 
        &\leq \sup_{h \in \mathcal{H}}\Bigl\{\int h(u) \, d(P^U - P_m^U)(u)\Bigr\} \label{ineq: UpperboundIntvarphi}.
    \end{align}
    Then applying Lemma \ref{le: RademacherComplexityofR_n} and \citet[Theorem 4.10]{wainwright2019high}, we derive that there exists an event $\Omega_4$ with probability at least $1 -m^{-1}$ such that on this event we have
    \begin{align} \label{Omega4}
    \sup_{h \in \mathcal{H}}\biggl|\int h\, d(P^U -P_m^U) \biggr| \leq 2\vartheta_m +\sqrt{\frac{2\log m}{m}}. 
    \end{align}
    Consequently, combining~\eqref{ineq: controlvarphiint}, \eqref{ineq: UpperboundIntvarphi} and \eqref{Omega4}, and working on the event $\Upsilon\cap \Omega_3 \cap \Omega_4$, we obtain
    \begin{align}
        \int \varphi_{b;n,m}(u) \, d(P^U - P^U_m) &\leq C_7 (\log m)^{\frac{5}{2 \wedge \alpha \wedge \beta}} \gamma\Bigl(2\vartheta_m + \sqrt{\frac{2 \log m}{m}}\Bigr) + \frac{C_6}{m},\notag \\
        &\leq C_7^\prime (\log m)^{\frac{6}{2 \wedge \alpha \wedge \beta}} \Bigl(\vartheta_m + \sqrt{\frac{2 \log m}{m}} \Bigr). \notag 
    \end{align}
    for some $C_7^\prime$ depends on $d, \sigma_1, \sigma_2, \alpha, \beta, \gamma_2$. A symmetric argument can be applied to establish the upper bound for $\int -\varphi_{b;n,m}(u) \, d(P^U - P^U_m)$ and the second claim follows. Finally, the proof is complete by observing that $\Prob(\Upsilon\cap  \Omega_1\cap\Omega_2\cap \Omega_3\cap\Omega_4)\geq 1 - 18({\log n})^{-1}$. 
    \end{proof}
    
\begin{lemma}\label{le: RademacherComplexityofR_n}
Suppose that $\mathcal{T}$ be a subset of linear maps from $\mathbb{R}^p$ to $\mathbb{R}^d$ whose operator norms are bounded by 1 and let $\mathcal{L}$ be a subset of $\{g: \text{$g\in\mathrm{Lip}_{1,1}(\mathcal{B}_{0}^d) $, $g(0)=0$ and $g$ is convex}\}$.  Define $\mathcal{F}:= \{(g\circ h)\mathbbm{1}_{\mathcal{B}_{0}^p}: h\in\mathcal{T}, g\in \mathcal{L}\}$. Let $P \in \mathcal{P}_2(\mathbb{R}^p)$. Then exists $C>0$, depending only on $d$, such that
\[
\mathcal{R}_n(\mathcal{F}, P) 
\leq C\biggl(\vartheta_n + \sqrt\frac{p}{n}\biggr),
\]
where 
    \begin{align}
        {\vartheta}_k:= \begin{cases}
            k^{-2/d},  \quad &\text{if $d \geq 5$,}  \\
            k^{-1/2}\log k, \quad  &\text{if $d = 4$,} \\
            k^{-1/2}, \quad &\text{if $d \leq 3$.}
        \end{cases} \label{ineq: vartheta_k} 
    \end{align}
for $k\in\mathbb{N}$.
\end{lemma}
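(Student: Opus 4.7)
The plan is to bound the $\|\cdot\|_\infty$-covering number of $\mathcal{F}$ and then apply Dudley's entropy integral with an adaptively chosen truncation. First, since every $h\in\mathcal{T}$ is linear with $h(0)=0$ and every $g\in\mathcal{L}$ satisfies $g(0)=0$, writing $X_i':= X_i\mathbbm{1}_{\{\|X_i\|\leq 1\}}$ I have $g(h(X_i))\mathbbm{1}_{\mathcal{B}_0^p}(X_i)=g(h(X_i'))$, so I may restrict attention to covariates in $\mathcal{B}_0^p$; in particular every $f\in\mathcal{F}$ satisfies $|f|\leq 1$ on this set.

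Next I compare pairs: for $(h,g),(h',g')\in\mathcal{T}\times\mathcal{L}$ and $x\in\mathcal{B}_0^p$, the $1$-Lipschitz property of $g$ and $\|x\|\leq 1$ give $|g(h(x))-g'(h'(x))|\leq \|h-h'\|_{\mathrm{op}}+\|g-g'\|_\infty$, so
\[
N(\epsilon,\mathcal{F},\|\cdot\|_\infty)\leq N(\epsilon/2,\mathcal{T},\|\cdot\|_{\mathrm{op}})\cdot N(\epsilon/2,\mathcal{L},\|\cdot\|_\infty).
\]
A standard volumetric argument in $\mathbb{R}^{d\times p}$ yields $\log N(\epsilon,\mathcal{T},\|\cdot\|_{\mathrm{op}})\leq C_0 dp\log(1/\epsilon)$, whereas the fact that each $g\in\mathcal{L}$ is \emph{convex} and $1$-Lipschitz lets me invoke Bronshtein's covering-number theorem for convex functions to obtain $\log N(\epsilon,\mathcal{L},\|\cdot\|_\infty)\leq C_1(d)\,\epsilon^{-d/2}$. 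Combining,
\[
\sqrt{\log N(\epsilon,\mathcal{F},\|\cdot\|_\infty)}\lesssim \sqrt{dp\log(1/\epsilon)}+\sqrt{C_1(d)}\,\epsilon^{-d/4}.
\]

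I then invoke Dudley's chaining bound (e.g.\ \citet{wainwright2019high}), using $\|\cdot\|_{L_2(P_n)}\leq\|\cdot\|_\infty$, to write
\[
\mathcal{R}_n(\mathcal{F},P)\leq \inf_{\delta\in(0,1)}\Bigl\{4\delta+\tfrac{12}{\sqrt{n}}\int_\delta^1\sqrt{\log N(\epsilon,\mathcal{F},\|\cdot\|_\infty)}\,d\epsilon\Bigr\}.
\]
The $\sqrt{dp\log(1/\epsilon)}$ piece integrates to a $d$-dependent constant times $\sqrt{p/n}$. The $\epsilon^{-d/4}$ piece is then handled by a case split that reproduces the definition of $\vartheta_n$: when $d\leq 3$ the integral converges without truncation, contributing $O(n^{-1/2})$; when $d=4$ the choice $\delta\asymp n^{-1/2}$ gives $O(n^{-1/2}\log n)$; and when $d\geq 5$, optimizing at $\delta\asymp n^{-2/d}$ yields $O(n^{-2/d})$. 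Summing the two contributions gives the claimed $C(\vartheta_n+\sqrt{p/n})$.

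The main obstacle is obtaining the sharp $\epsilon^{-d/2}$ metric entropy for $\mathcal{L}$: without exploiting convexity, a generic $1$-Lipschitz covering bound would give $\epsilon^{-d}$, which upon integration would inflate the rate by a polynomial factor and fail to match the minimax empirical $1$-Wasserstein rate. Crucially, the convexity imposed in the definition of $\mathcal{L}$ (required by construction, since these will be instantiated as restrictions of the Brenier potentials $\varphi_{b;n,m}^*$ and $\varphi_{b;n,m}$) permits Bronshtein's theorem and recovers the sharp $\vartheta_n$ dependence; once this is in place, the operator-norm covering of $\mathcal{T}$, the composition bound, and the Dudley case split are routine.
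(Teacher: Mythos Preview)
Your proposal is correct and follows essentially the same route as the paper: cover $\mathcal{T}$ by a volumetric argument in $\mathbb{R}^{d\times p}$, cover $\mathcal{L}$ via Bronshtein's theorem for bounded Lipschitz convex functions, combine via the composition bound, and plug into Dudley's entropy integral with the same case split on $d$ to recover $\vartheta_n$. The only cosmetic differences are that the paper works directly with the $L^2(P)$ covering number (rather than passing through $\|\cdot\|_\infty$) and crudely upper-bounds $dp\log(1+2/\delta)$ by $2dp/\delta$ before integrating, whereas you keep the logarithm; both yield the same $\sqrt{p/n}$ term.
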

\begin{proof}
For any fixed $\delta\in(0,1)$, let $\mathcal{G}$ be a $\delta$-covering set of $\mathcal{L}$ with respect to $\|\cdot\|_{L^\infty(\mathcal{B}_{0}^d)}$. By \citet[Remark 1 and Theorem 6]{bronshtein1976varepsilon}, we have $N_0 := |\mathcal{G}| \leq e^{C_8 (4/\delta)^{d/2}}$ for some $C_8 > 0$, depending only on $d$. Similarly, let $\mathcal{H}$ be a $\delta$-covering set of $\mathcal{T}$ with respect to $\|\cdot\|_{\mathrm{op}}$. By \citet[Lemma~5.7]{wainwright2019high}, we have $N_1:=|\mathcal{H}| \leq (1+2/\delta)^{dp}$. Now, given any $f = g\circ h \in \mathcal{F}$, we can find $g'\in\mathcal{G}$ and $h'\in\mathcal{H}$ such that $\|g'-g\|_{L^\infty(\mathcal{B}_{0}^d)} \leq \delta$ and $\|h'-h\|_{\mathrm{op}} \leq \delta$. Consequently, for $X\sim P$, we have 
\begin{align*}
    \|(g \circ h &- g' \circ h')\one_{\mathcal{B}_0^p}\|_{L^2(P)} \leq \|(g \circ h - g' \circ h)\one_{\mathcal{B}_0^p}\|_{L^2(P)} + \|(g'\circ h  - g'\circ h')\one_{\mathcal{B}_0^p}\|_{L^2(P)} \\
    &= \Bigl\{\mathbb{E} \Bigl|(g-g')\circ h(X)\one_{\{\|X\|\leq 1\}}\Bigr|^2\Bigr\}^{1/2}  + \Bigl\{\mathbb{E} \Bigl|g'\circ (h-h')(X) \one_{\{\|X\|\leq 1\}}\Bigr|^2\Bigr\}^{1/2} \leq 2\delta.
    \end{align*}
    which implies 
    \begin{align}
        \log N(2\delta, \mathcal{F}, \|\cdot\|_{L^2(P)}) \leq \log (N_0 N_1) \leq C_8\biggl(\frac{4}{\delta}\biggr)^{d/2} \!+ dp\log\biggl(1+\frac{2}{\delta}\biggr) \leq C_8\biggl(\frac{4}{\delta}\biggr)^{d/2} \!+ \frac{2dp}{\delta}. \label{ineq: MetricEntropyforF} 
    \end{align}
    Since all functions in $\mathcal{F}$ are uniformly bounded by 1, the $L_2(P)$-diameter of $\mathcal{F}$ is bounded by $2$. Thus, by Dudley's chaining \Citep[see e.g.][Theorem 5.22]{wainwright2019high}, for any $\epsilon \in [0, 1]$, we have 
    \begin{align*}
         \mathcal{R}_n(\mathcal{F}, P) &\leq 2 \epsilon + \frac{32}{\sqrt{n}}\E \int_{\epsilon/4}^2 \log^{1/2} N(\delta, \mathcal{F}, \|\cdot\|_{L^2(P)}) \, d \delta\\
        & \leq 2 \epsilon +  \frac{2^{5+3d/4} C_8^{1/2} }{n^{1/2}}\int^{2}_{\epsilon/4} \frac{1}{\delta^{d/4}}   \, d \delta  + \frac{64(dp)^{1/2}}{n^{1/2}} \int_{\epsilon / 4}^2 \frac{1}{\delta^{1/2}} \, d \delta.
    \end{align*}
    By choosing $\epsilon\asymp n^{-2/d}$ if $d\geq 4$ and $\epsilon = 0$ otherwise, we deduce from the previous inequality that there exists $C>0$ depending only on $d$ such that
    \begin{align*}
        \mathcal{R}_n(\mathcal{F}, P) \leq C \begin{cases}
            n^{-2/d} + (p/n)^{1/2},  \quad &\text{if $d \geq 5$}  \\
            n^{-1/2}(\log n + p^{1/2}), \quad  &\text{if $d = 4$} \\
            (p/n)^{1/2}, \quad &\text{if $d \leq 3$,}
        \end{cases},
    \end{align*}
    completing the proof.
\end{proof}
\begin{lemma}\label{le: BoundforGbnm}
There exists $C>0$ depending only on $d,\alpha,\beta,\sigma_1,\sigma_2,\gamma_1,\gamma_2$, such that with probability at least $1-6/n$, both of the following inequalities hold:
\begin{gather}
    \sup_{b \in \mathcal{B}}\biggl| \int \psi^*_b(v) \, d(P^{A(b)} - P_n^{A(b)})(v) \biggr|\notag \leq C (\log m)^{\frac{2}{2 \wedge \alpha \wedge \beta}}n^{-1/2} \\
    \sup_{b \in \mathcal{B}}\biggl| \int \psi_b(u) \, d(P^U - P_m^{U})(u) \biggr|\notag \leq C (\log m)^{\frac{2}{2 \wedge \alpha \wedge \beta}}m^{-1/2}.
\end{gather}
\end{lemma}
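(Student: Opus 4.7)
The key distinction between this lemma and Lemma \ref{le: Controlvarphi*} is that the Kantorovich potentials $\psi_b^*$ and $\psi_b$ are \emph{deterministic} functions of $b$, not depending on the samples $\{(X_i, \varepsilon_i)\}$ or $\{U_j\}$. Hence for each fixed $b \in \mathcal{B}$, the quantities $\int \psi_b^*\, d(P^{A(b)} - P_n^{A(b)})$ and $\int \psi_b\, d(P^U - P_m^U)$ are empirical averages of i.i.d.\ centered random variables, which yield the parametric rate $n^{-1/2}$ (resp.\ $m^{-1/2}$) via direct concentration, bypassing the Rademacher-complexity detour of Lemma \ref{le: Controlvarphi*}.

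For the pointwise concentration, after the harmless normalization $\psi_b^*(0) = 0$ (which leaves the integrals unchanged), we argue a growth bound $|\psi_b^*(v)| \leq \|v\|^2/2 + C_0$ uniformly over $b \in \mathcal{B}$. This can be obtained by viewing $\psi_b^*$ as the Brenier-like potential whose gradient is the optimal transport from $P^{A(b)}$ to $P^U$, and exploiting the Gaussian tails of $P^U$ together with the feasibility constraint $\psi_b^*(v) + \psi_b(u) \geq v^T u$ and convexity (essentially a population analogue of the bounds in Proposition~\ref{le: ExtProperties}(iii), with the tail of $P^U$ replacing $L_{b;n,m}$). Since $A(b) \sim \sw{\sigma}{\alpha \wedge \beta}$ uniformly in $b \in \mathcal{B}$ for some $\sigma$ depending on $\sigma_1,\sigma_2$, the variable $\psi_b^*(A(b)) - \E\psi_b^*(A(b))$ is sub-Weibull with parameter $(2 \wedge \alpha \wedge \beta)/2$, and the Bernstein-type inequality for sub-Weibull sums (Proposition \ref{prop: swequivalent}(iii)) yields
\[
\Bigl| {\textstyle\int} \psi_b^*\, d(P^{A(b)} - P_n^{A(b)}) \Bigr| \lesssim (\log n)^{2/(2 \wedge \alpha \wedge \beta)}\, n^{-1/2}
\]
with probability at least $1 - n^{-2}$, for each fixed $b$. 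A parallel argument for $\psi_b(U)$, using Gaussian concentration for $U$, yields the corresponding bound at rate $m^{-1/2}$.

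To upgrade the pointwise bound to a supremum over $b \in \mathcal{B}$, cover $\mathcal{B}$ with an $\eta$-net $\mathcal{N}$ of cardinality $|\mathcal{N}| \lesssim (1/\eta)^{dp}$; a union bound at the net points yields uniform control on $\mathcal{N}$ with $\log|\mathcal{N}|$ absorbed into the logarithmic factor when $\eta$ is polynomially small in $n$. For interpolation between net points, I would combine the linear relation $A(b_1) - A(b_2) = (b_2 - b_1) X$ and piecewise Lipschitz control on $\psi_b^*$ (a population analogue of Proposition \ref{prop: PiecewiseLipvarphi}) with the Wasserstein stability of Brenier potentials
\[
\|\psi_{b_1}^* - \psi_{b_2}^*\|_{L^\infty(K)} \lesssim \mathcal{W}_2\bigl(P^{A(b_1)}, P^{A(b_2)}\bigr) \leq \|b_1 - b_2\|_{\Sigma} \cdot \bigl(\E\|S\|^2\bigr)^{1/2}
\]
on compact sets $K$. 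The principal technical obstacle is establishing the Wasserstein-stability estimate for the potentials uniformly over $\mathcal{B}$ when the marginals $P^{A(b)}$ are merely sub-Weibull (rather than compactly supported), for which standard stability theorems do not directly apply; this will require either a Caffarelli-type contraction argument that exploits the Gaussian reference $P^U$, or a careful truncation combined with Assumption~\ref{assumption: anticoncen} to control the tails of $P^{\varepsilon}$ and hence of $P^{A(b)}$.
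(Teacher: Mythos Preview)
Your proposal correctly isolates the crucial feature that $(\psi_b,\psi_b^*)$ are deterministic, but the route you take for the supremum over $b$ diverges from the paper and runs into a real obstacle.

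First, the growth bound. You propose $|\psi_b^*(v)| \leq \|v\|^2/2 + C_0$ via a ``population analogue of Proposition~\ref{le: ExtProperties}(iii)''. That analogue does not exist: the bound in (iii) comes from the finite constant $L_{b;n,m}$, which has no counterpart when both marginals are unbounded. The paper instead uses Assumption~\ref{assumption: anticoncen} directly: by Lemma~\ref{le: AnticoncentrationforSumofTwoRVs} the density $f_{A(b)}$ inherits an anti-concentration lower bound uniformly in $b\in\mathcal{B}$, and then \citet[Theorem~11]{manole2021sharp} gives $\|\nabla\psi_b^*(v)\|\le C(\|v\|+1)$, hence $|\psi_b^*(v)|\le C(\|v\|+1)^2$. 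So the anti-concentration assumption enters at the level of the growth estimate itself, not only for tail truncation as you suggest in your last paragraph.

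Second, and more importantly, the paper does not use an $\eta$-net or any stability of Brenier potentials for the supremum. After truncating to $\{\|(S_i,\varepsilon_i)\|\le\kappa\}$ with $\kappa\asymp(\log n)^{1/(\alpha\wedge\beta)}$, the centered truncated functions $\bar\psi_b^*$ satisfy $\sup_{b\in\mathcal{B}}\|\bar\psi_b^*\|_\infty \le C(\log m)^{2/(2\wedge\alpha\wedge\beta)}$. Writing $G=\sup_{b\in\mathcal{B}}\bigl|n^{-1}\sum_i\bar\psi_b^*(S_i,\varepsilon_i)\bigr|$, the paper observes that perturbing one coordinate $(S_1,\varepsilon_1)$ changes $G$ by at most $2\sup_{b}\|\bar\psi_b^*\|_\infty/n$, so McDiarmid's inequality concentrates $G$ around $\mathbb{E}G$, and $\mathbb{E}G$ is controlled by a Markov/variance argument. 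This bounded-differences route is far more elementary and completely sidesteps the Wasserstein-stability obstacle you flagged; no continuity of $b\mapsto\psi_b^*$ is ever invoked.
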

\begin{proof}
Since adding a constant to $\psi_b^*$ or $\psi_b$ will not change the value of $\int \psi_b^*(v) \, d(P^{A(b)} - P_n^{A(b)})(v)$ or $\int \psi_b(u) \, d(P^{U} - P_m^{U})(u)$, we assume $\psi_b^*(0) = \psi_b(0) = 0$ with out loss of generality. We first note that $\E\|(b^* - b)\Sigma^{1/2}S\|^2 = \|b^* -b\|_{\Sigma}^2 \leq 1$, for any $b \in \mathcal{B}$. By Lemma \ref{le: AnticoncentrationforSumofTwoRVs} and the anti-concentration inequality of $\varepsilon$ given in \eqref{ineq: AntiConcentrationofvarepsilon}, there exists a constant $M_1>0$ depends on $\gamma_1$ and $\gamma_2$ such that the density function of $A(b)$, write as $f_{A(b)}$, have the anti-concentration inequality
    \begin{align}
        f_{A(b)}(v) \geq M_1 \exp(-2\gamma_2 \|v\|^2), \quad \text{for all $\|v\| \geq 2$}. \notag 
    \end{align}
    Then by recalling that $P^U \sim \sw{\sqrt{2d}}{2}$, we apply \citet[Theorem 11]{manole2021sharp}\footnote{In the original Theorem 11 of \cite{manole2021sharp}, a regular condition is required on the density function of the source probability measure. Nevertheless, it is indeed sufficient to reestablish the result by merely assuming an anti-concentration inequality on the density function of the source probability measure, as we have proven for $f_{A(b)}$ here.} to obtain that $\|\nabla\psi_b^*(v)\| \leq C (\|v\| + 1)$ for all $v \in \mathbb{R}^d$, where $C>0$ is a constant depending on $d, \gamma_1, \gamma_2$. Therefore, applying mean value theorem, we have $|\psi_b^*(v)| \leq C(\|v\| + 1)^2$ for all $v \in \mathbb{R}^d$.
    
    Define $ \Omega_1 := \{\max_{1 \leq i \leq n} \|(\varepsilon_i, S_i)\| \leq \kappa\}$ and $E_b := \{T_b(s, e) : (s, e) \in \mathcal{B}_{0,\kappa}^{d+p}\}$ for each fixed $b \in \mathcal{B}$. From the proof of Lemma~\ref{le: Controlvarphi*}, we have $\mathbb{P}(\Omega_1)\geq 1-2/n$. Then on $\Omega_1$ we can obtain that 
    \begin{align}
        \sup_{b \in \mathcal{B}}\Bigl|\int_{\mathbb{R}^d \setminus E_b} \psi_b^*(v) \, d(P^{A(b)} - P_n^{A(b)})(v) \Bigr| &=  \sup_{b \in \mathcal{B}}\Bigl|\int_{\mathbb{R}^{d+p} \setminus \mathcal{B}_{0, \kappa}^{d +p}} \psi_b^*\circ T_b(s, e) \, d(P^{S} \otimes P^{\varepsilon})(s, e) \Bigr| \notag \\
        &\leq C\sup_{b \in \mathcal{B}}\int_{\mathbb{R}^{d+p} \setminus \mathcal{B}_{0, \kappa}^{d+p}} (1 + \|T_b(s, e)\|)^2 \, d(P^{S} \otimes P^{\varepsilon})(s, e) \notag \\
        &\leq C\sup_{b \in \mathcal{B}}\int_{\mathbb{R}^{d+p} \setminus \mathcal{B}_{0, \kappa}^{d+p}} (1 + \|(s, e)\|)^2 \, d(P^{S} \otimes P^{\varepsilon})(s, e) \notag \\
        &\leq \frac{C^\prime}{n},  \notag 
    \end{align}
    for some constant $C^\prime>0$ depending on $d, \alpha, \beta, \sigma_1, \sigma_2, \gamma_1, \gamma_2$, where we used the fact that $P^{(S, \varepsilon)} \sim \sw{\rho}{\alpha \wedge \beta}$ and Lemma \ref{Lem:TruncatedMoment} in the final inequality. It therefore remains to control 
     \[
        G:= \sup_{b \in \mathcal{B}} \Bigl|\int_{E_b} \psi_b^*(v) \, d(P^{A(b)} - P_n^{A(b)})(v) \Bigr| = \sup_{b \in \mathcal{B}}\Bigl| \int_{\mathcal{B}_{0, \kappa}^{d+p}} \psi_b^*\circ T_b(s, e) \, d(P^{S}\otimes P^{\varepsilon} - P^{S}_n\otimes P^{\varepsilon}_n)(s, e)\Bigr|.
    \]
    To simplify the notation, define the centered function \[\bar{\psi}_b^*(s, e) := {\psi}_b^*\circ T_b(s, e)\one\{\|(s, e)\|\leq \kappa\} - \E[{\psi}_b^*\circ T_b(S, \varepsilon)\one\{\|(S, \varepsilon)\| \leq \kappa\}],\] 
    then it follows that $\|\bar{\psi}_b^*\|_{\infty} \leq 2C(\kappa + 1)^2 \leq C (\log m)^{\frac{1}{2 \wedge \alpha \wedge \beta}}$. In this notation, we have $G = \sup_{b\in\mathcal{B}} |n^{-1}\sum_{i\in[n]} \bar\psi_b^*(S_i,\varepsilon_i)|$. By Markov's inequality, we then have 
    \begin{align}
        \E(G) &= \int_{0}^{+\infty} \Prob(G \geq t) \, dt \leq n^{-1/2} + C \int_{n^{-1/2}}^{+\infty} \frac{ (\log m)^{\frac{2}{2 \wedge \alpha \wedge \beta}}}{nt^2} dt \lesssim \frac{(\log m)^{\frac{2}{2 \wedge \alpha \wedge \beta}}}{\sqrt{n}}. \label{intcontrolexpectationofpsi*} 
    \end{align}
    We now claim that $G$, when viewed as a function of $(s_1,e_1),\ldots,(s_n,e_n)$, satisfies the bounded difference property \citep[see e.g.][(2.32)]{wainwright2019high}. By symmetry, it suffices to consider a perturbation on $(s_1, e_1)$. Define $v=(v_i)_{i = 1}^n, v'=(v_i')_{i = 1}^n$ where each $v_i = (s_i, e_i), v_i' = (s_i', e_i') \in \mathbb{R}^{d+p}$, such that $v_i = v_i'$ for any $i \not= 1$. We have 
    \begin{align} 
        \sup_{b\in\mathcal{B}} \biggl|\frac{1}{n}\sum_{i = 1}^n\bar{\psi}_b^*(v_i)\biggr| -  \sup_{b \in \mathcal{B}}\biggl|\frac{1}{n}\sum_{i = 1}^n\bar{\psi}_b^*(v_i')\biggr| &\leq \sup_{b\in\mathcal{B}} \biggl\{\biggl|\frac{1}{n}\sum_{i = 1}^n\bar{\psi}_b^*(v_i)\biggr| - \biggl|\frac{1}{n}\sum_{i = 1}^n\bar{\psi}_b^*(v_i')\biggr|\biggr\} \notag \\
        &\leq \frac{1}{n} \sup_{b\in\mathcal{B}}\Bigl|\bar{\psi}_b^*(v_1) - \bar{\psi}_b^*(v_1') \Bigr|\leq \frac{ 2C (\log m)^{\frac{1}{2 \wedge \alpha \wedge \beta}}}{n}, \notag 
    \end{align}
    estbalishing, the bounded difference property for $G$. Thus by McDiarmid's inequality \citep[see e.g.][Corollary 2.21]{wainwright2019high}, we obtain that the event 
    \[\Lambda_1:= \Bigl\{G \leq  \mathbb{E}G +  \frac{\sqrt{2}C(\log m)^{\frac{2}{2 \wedge \alpha \wedge \beta}}}{\sqrt{n}} \Bigr\},\]
    occurs with probability at least $1-1/m$.

    Thus, working on the event $ \Omega_1 \cap \Lambda_1$, we deduce from \eqref{intcontrolexpectationofpsi*} that 
    \begin{align}
        \sup_{b \in \mathcal{B}}\Bigl|\int \psi_b^*(v) d(P^{A(b)} - P_n^{A(b)})(v) \Bigr| \leq  \frac{C(\log m)^{\frac{2}{2 \wedge \alpha \wedge \beta}}}{\sqrt{n}}+ \frac{C^\prime}{n} \leq  \frac{C(\log m)^{\frac{2}{2 \wedge \alpha \wedge \beta}}}{\sqrt{n}}\notag,
    \end{align}
    for some constant $C>1$ depends on $d, \alpha, \beta, \gamma_1, \gamma_2, \sigma_1, \sigma_2$, which completes the first claim of the lemma.

    For the second claim, in order to bound $\int \psi_b(u) \, d(P^{U} - P_m^{U})(u)$, we notice that the anti-concentration property of $P^U$ holds due to the Gaussian assumption. Thus \citet[Theorem~11]{manole2021sharp} implies that $\|\nabla \psi_b(u)\| \leq \tilde C(1 + \|u\|)^{\frac{2}{\alpha \wedge \beta}}$ for some $\tilde C>0$ depending on $d, \sigma_1, \sigma_2, \alpha, \beta$, and it follows that $|\psi_b(u)| \leq  \tilde C (1 + \|u\|)^{\frac{2}{\alpha \wedge \beta} + 1}$.

    Define $\Omega_2 := \{\max_{1 \leq i \leq m}\|U_i\| \leq \gamma\}$. From the proof of Lemma~\ref{le: Controlvarphi*} again, we have $\mathbb{P}(\Omega_2)\geq 1-2/n$. Working on $\Omega_2$, we have 
    \begin{align}
         \sup_{b \in \mathcal{B}}\biggl|\int_{\mathbb{R}^d \setminus \mathcal{B}_{0, \gamma}} \psi_b(u) d(P^{U} - P_m^{U})(u) \biggr| &=  \sup_{b \in \mathcal{B}}\biggl|\int_{\mathbb{R}^d \setminus \mathcal{B}_{0, \gamma}} \psi_b(u) dP^{U}(u) \biggr| \notag \\
        &\leq \tilde C\int_{\mathbb{R}^d \setminus \mathcal{B}_{0, \gamma}} (1 + \|u\|)^{\frac{2}{\alpha\wedge\beta} + 1} dP^{U}(u) \leq \frac{\tilde C}{m} \notag,
    \end{align}
    for some constant $\tilde{C}>0$ depending on $d, \alpha, \beta, \sigma_1, \sigma_2$. Now, defining $\tilde G := \sup_{b\in\mathcal{B}}\bigl|\int_{\mathcal{B}_{0,\gamma}}\psi_b d(P^U-P_m^U)\bigr|$, by the same argument as in the proof of the first part of this lemma, there is an event $\Lambda_2$ with probability at least $1-m^{-1}$, such that on $\Omega_2\cap\Lambda_2$, we have 
      \begin{align}
        \sup_{b \in \mathcal{B}}\Bigl|\int \psi_b(u) d(P^{U} - P_m^{U})(u) \Bigr| \leq \tilde G + \frac{\tilde C^\prime}{m} \leq \mathbb{E}\tilde G + \frac{\bar C(\log m)^{\frac{2}{2\wedge \alpha\wedge \beta}}}{\sqrt{m}}+\frac{\tilde C}{m} \leq \frac{\bar C(\log m)^{\frac{2}{2\wedge \alpha\wedge \beta}}}{\sqrt{m}}, \label{ineq: intcontrolpsi}
    \end{align}
for $\bar{C}>0$ depending only on $d, \alpha, \beta, \sigma_1, \sigma_2, \gamma_2$.
\end{proof}

\begin{lemma}\label{le: ULLN}
There exists $C>0$ depending only on $d, \alpha, \beta, \sigma_1, \sigma_2,\gamma_1,\gamma_2$, such that  with probability at least $1 - 5/n$, we have 
\begin{gather}
    \sup_{b \in \mathcal{B}}\biggl|\int \|v\|^2 \, d \bigl(P_n^{A(b)} - P^{A(b)}\bigr)(v)\biggr|  \leq C (\log m)^{\frac{2}{2\wedge\alpha\wedge\beta}} n^{-1/2},\notag \\
    \biggl|\int \|u\|^2 \, d \bigl(P^{U} - P_m^U\bigr)(u) \biggr|\leq C\sqrt{\frac{\log m}{m}}.\notag 
\end{gather}
\end{lemma}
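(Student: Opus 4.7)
The two inequalities are handled separately and combined by a union bound. The second inequality is immediate: since $\|U_i\|^2 \sim \chi_d^2$ is sub-exponential with parameters depending only on $d$, a Bernstein-type concentration argument yields $\bigl|\tfrac{1}{m}\sum_{i=1}^m \|U_i\|^2 - d\bigr| \lesssim \sqrt{\log m / m}$ with probability at least $1 - m^{-1}$.

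For the first inequality, write $\Delta := b^* - b$, so that $\|\Delta\|_\Sigma < 1$ for $b \in \mathcal{B}$. The key step is the decomposition
\[
\|A(b)\|^2 - \mathbb{E}\|A(b)\|^2 = T_1(b) + T_2(b) + T_3,
\]
where $T_1(b) := \|\Delta X\|^2 - \mathbb{E}\|\Delta X\|^2$, $T_2(b) := 2\langle \Delta X, \varepsilon\rangle - 2\mathbb{E}\langle \Delta X, \varepsilon\rangle$, and $T_3 := \|\varepsilon\|^2 - \mathbb{E}\|\varepsilon\|^2$ does not depend on $b$. Setting $B := \Delta\Sigma^{1/2}$, so that $\|B\|_{\mathrm{F}} = \|\Delta\|_\Sigma \leq 1$ on $\mathcal{B}$, and rewriting $T_1$ and $T_2$ as trace inner products, the trace--Schatten duality inequality $|\langle B^T B, M\rangle_{\mathrm{F}}| \leq \|B^T B\|_* \|M\|_{\mathrm{op}} = \|B\|_{\mathrm{F}}^2 \|M\|_{\mathrm{op}}$ and a Cauchy--Schwarz-type bound yield
\[
\sup_{b \in \mathcal{B}} |T_1(b)| \leq \Bigl\|\tfrac{1}{n}\sum_{i=1}^n S_iS_i^T - I_p\Bigr\|_{\mathrm{op}}, \quad \sup_{b \in \mathcal{B}} |T_2(b)| \leq 2\Bigl\|\tfrac{1}{n}\sum_{i=1}^n \varepsilon_i S_i^T - \mathbb{E}(\varepsilon S^T)\Bigr\|_{\mathrm{F}},
\]
both of which are $b$-free and thus amenable to a single concentration analysis.

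Each of the three resulting quantities is a sample average whose summands are products of coordinates of sub-Weibull random vectors. By Proposition~\ref{prop: swsum} together with a sub-Weibull Bernstein-type tail bound, the sample covariance term concentrates in operator norm at rate $(\log m)^{2/\alpha}/\sqrt{n}$ after a standard covering argument over $\mathcal{S}^{p-1}$; the cross term concentrates entrywise at rate $(\log m)^{2/(\alpha\wedge\beta)}/\sqrt{n}$, and hence in Frobenius norm up to a factor of $\sqrt{dp}$; and $T_3$ concentrates at rate $(\log m)^{2/\beta}/\sqrt{n}$. Calibrating each failure probability to be of order $1/n$ and combining via a union bound yields the claimed bound $C(\log m)^{2/(2\wedge \alpha\wedge \beta)}/\sqrt{n}$. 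The main obstacle is essentially bookkeeping of the sub-Weibull parameters: squaring a sub-Weibull $(\sigma, \alpha)$ random vector produces a sub-Weibull $(\sigma^2, \alpha/2)$ scalar, which is precisely what forces the exponent $2/(2\wedge \alpha\wedge\beta)$ on $\log m$; once the $b$-independent decomposition is in place, all remaining steps follow from standard applications of sub-Weibull concentration.
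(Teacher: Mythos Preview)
Your approach is correct and yields a valid proof, but it takes a genuinely different route from the paper. For the second inequality the two arguments coincide (the paper quotes the Laurent--Massart $\chi^2$ tail bound, which is precisely the sub-exponential Bernstein you invoke). For the first inequality, however, the paper does not expand $\|A(b)\|^2$ at all: it simply observes that the map $v\mapsto\|v\|^2$ satisfies the same gradient-growth estimate $\|\nabla(\|v\|^2)\|\le 2(\|v\|+1)$ that was the only property of $\psi_b^*$ used in Lemma~\ref{le: BoundforGbnm}, and then recycles verbatim the truncation $+$ Markov $+$ McDiarmid argument from that lemma. Your quadratic decomposition into $T_1,T_2,T_3$ is more transparent---it exploits the explicit structure of $\|\cdot\|^2$ rather than treating it as a black-box function with linear gradient growth---and collapses the uniformity in $b$ to three $b$-free matrix/scalar concentration problems, which is arguably cleaner.

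The trade-off is in the dependence on $p$. Your covering argument over $\mathcal{S}^{p-1}$ for $T_1$ and the $\sqrt{dp}$ factor you acknowledge for $T_2$ force the constant $C$ to depend on $p$, whereas the lemma as stated asserts that $C$ depends only on $d,\alpha,\beta,\sigma_1,\sigma_2,\gamma_1,\gamma_2$. The paper's bounded-differences route is dimension-free at the McDiarmid step (the bounded-difference constant is $2\sup_b\|\bar f_b\|_\infty/n\lesssim\kappa^2/n$, independent of $p$). This distinction is harmless for the eventual Theorem~\ref{thm: FasterRate}, since a $\sqrt{p/n}$ term already enters there via Lemma~\ref{le: RademacherComplexityofR_n}; but you should flag that your argument, as written, proves a slightly weaker form of the lemma than the one stated.
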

\begin{proof}
    Observe that the only property of $\psi_b^*$ that we used in the first part of the proof of Lemma~\ref{le: BoundforGbnm} is that $\|\nabla \psi_b^*(v)\|\leq C(\|v\|+1)$ for all $b\in\mathcal{B}$ and $v\in\mathbb{R}^d$. The same property is satisfied by the function $v\mapsto \|v\|^2$. Hence, a very similar proof to that of Lemma~\ref{le: BoundforGbnm} will establish the first claim here.

    As for the second inequality, since $U_i \stackrel{i.i.d.}{\sim} \mathcal{N}(0, I_d)$, we have $\sum_{i = 1}^m\|U_i\|^2 \sim \chi^2_{md}$. By \citet[Lemma~1]{laurent2000adaptive} we deduce that 
    \begin{align}
        \Prob\biggl(\Bigl|\frac{1}{m}\sum_{i = 1}^m \|U_i\|^2 - \E\|U\|^2\Bigr| \geq  \sqrt{\frac{2d \log m}{m}} + \frac{2\log m}{m}\biggr) \leq \frac{2}{m}, \notag 
    \end{align}
    which implies the second claim. 
\end{proof}

\begin{proof}{\textbf{of Theorem \ref{thm: FasterRate}}}
    Recalling that in the regime of (\ref{Condition: nmSufficientlyLarge}) event $\Theta$ holds with probability at least $1 - 4(\log n)^{-1}$, and working on $\Theta$ we have $\hat b \in \mathcal{B}$. Thus there exists $M>0$ depending only on $d$, $\alpha$, $\beta$, $\sigma_1$, $\sigma_2$, $\gamma_1$, $\gamma_2$ such that with probability at least $1- 33(\log n)^{-1}$, we have 
    \begin{align}
        \mathcal{L}(\hat b) - \mathcal{L}(b^*) &\leq 2 \sup_{b \in \mathcal{B}}|\mathcal{L}(b) - \mathcal{L}_{n,m}(b)| \notag \\ 
        &\leq \Bigl|\frac{1}{m}\sum_{i = 1}^m \|U_i\|^2 - \E\|U\|^2 \Bigr| + \sup_{b \in \mathcal{B}}\Bigl|\frac{1}{n}\sum_{i = 1}^n \|T_b(S_i, \varepsilon_i) \|^2 - \E \|T_b(S_i, \varepsilon_i)\|^2 \Bigr| \notag \\
        &\hspace{4.5cm}+ \sup_{b \in \mathcal{B}}\Bigl|\mathcal{W}_2^2(P^{A(b)}, P^U) - \mathcal{W}_2^2(P_n^{A(b)}, P_m^U) \Bigr| \notag \\
        & \leq M(\log m)^{\frac{8}{2\wedge \alpha\wedge\beta}}\biggl(\sqrt\frac{p}{n}+\frac{1}{n^{2/d}}\biggr), \label{ineq: SupUpperBound} 
    \end{align}
    where the second inequality uses the definition of $\wip{\cdot}{\cdot}$ and  in the final inequality, we used Lemma \ref{le: ULLN} to control the first two terms and Proposition~\ref{Prop:W2distdiff} for the last term. 

    On the other hand, by the lower bound developed in (\ref{ineq: lb}) and Lemma \ref{le: eleinequality} we have for $r:=\wip{P^\varepsilon}{P^U}$ that 
    \begin{align}
        \mathcal{L}(\hat b)-\mathcal{L}(b^*) \geq \sqrt{r^2 + \|b^* - \hat b\|_{\Sigma}^2 } - r  \geq 
        \frac{1}{2}(1 + r^2)^{-1/2}\|b^* - \hat b\|_{\Sigma}^2.\label{Eq:Thm5Bound2}
    \end{align}
    Combining~\eqref{ineq: SupUpperBound} with~\eqref{Eq:Thm5Bound2}, we obtain that 
    \begin{align}
        \|b^* - \hat b\|_{\Sigma} \leq M  (\log m)^{\frac{4}{2\wedge \alpha\wedge\beta}}  \biggl\{\biggl(\frac{p}{n}\biggr)^{1/4}+\frac{1}{n^{1/d}}\biggr\},
    \end{align}
    with probability at least $1- 33(\log n)^{-1}$. Here we close the proof. 
\end{proof}

\section{Auxillary results}\label{apx: Auxillary}
\begin{lemma}\label{le: eleinequality}
    For any $a \geq 0$, we have inequality
    \[
    \sqrt{a + x^2}\leq \begin{cases}
           \frac{x^2}{2\sqrt{a}} + \sqrt{a} &, \text{if $0\leq x \leq 1$}, \\
            (x-1) + \frac{1}{2\sqrt{a}} + \sqrt{a}&, \text{if $x > 1$}.
        \end{cases} \notag,
    \]
    and 
    \[
    \sqrt{a + x^2}\geq \begin{cases}
           \frac{x^2}{2\sqrt{a + 1}} +\sqrt{a} &, \text{if $0\leq x \leq 1$}, \\
            \frac{x-1}{\sqrt{a + 1}} + \frac{1}{2\sqrt{a + 1}} + \sqrt{a}&, \text{if $x > 1$}.
        \end{cases}\notag
    \]
\end{lemma}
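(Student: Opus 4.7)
\textbf{Proof proposal for Lemma \ref{le: eleinequality}.} The plan is to treat each of the four inequalities as a standalone one-variable calculus exercise, leaning on two elementary observations: (i) the identity $\sqrt{a+x^2}-\sqrt{a}=\frac{x^2}{\sqrt{a+x^2}+\sqrt{a}}$, which immediately controls the left-hand side once the denominator is bounded; and (ii) convexity of $x\mapsto \sqrt{a+x^2}$, which has derivative $x/\sqrt{a+x^2}$ and second derivative $a(a+x^2)^{-3/2}\geq 0$. These give tangent-line inequalities at our disposal.

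For the two upper bounds (which implicitly require $a>0$; the case $a=0$ is vacuous since the right-hand side is $+\infty$), identity~(i) yields
\[
\sqrt{a+x^2}-\sqrt{a}=\frac{x^2}{\sqrt{a+x^2}+\sqrt{a}}\leq \frac{x^2}{2\sqrt{a}}
\]
on $0\leq x\leq 1$, which is the first upper bound. For $x>1$, define $h(x):=(x-1)+\frac{1}{2\sqrt{a}}+\sqrt{a}-\sqrt{a+x^2}$ and compute $h'(x)=1-x/\sqrt{a+x^2}>0$, so it suffices to verify $h(1)\geq 0$, which is precisely the first upper bound evaluated at $x=1$.

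For the two lower bounds, let $g(x):=\sqrt{a+x^2}-\sqrt{a}-\frac{x^2}{2\sqrt{a+1}}$. Then $g(0)=0$, and
\[
g'(x)=x\Bigl(\tfrac{1}{\sqrt{a+x^2}}-\tfrac{1}{\sqrt{a+1}}\Bigr)\geq 0\quad\text{for }0\leq x\leq 1,
\]
since $a+x^2\leq a+1$ on this interval; this establishes the first lower bound. For $x>1$, convexity gives the tangent-line inequality $\sqrt{a+x^2}\geq \sqrt{a+1}+(x-1)/\sqrt{a+1}$ at $x_0=1$, so it remains to check $\sqrt{a+1}\geq \sqrt{a}+\frac{1}{2\sqrt{a+1}}$. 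Rearranging using identity~(i), this is equivalent to $\frac{1}{\sqrt{a+1}+\sqrt{a}}\geq \frac{1}{2\sqrt{a+1}}$, i.e.\ $\sqrt{a}\leq \sqrt{a+1}$, which is immediate.

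There is no substantive obstacle; the only care needed is in handling the $a=0$ boundary case for the upper bounds and ensuring the pieces of each case meet correctly at $x=1$, which the argument above does automatically.
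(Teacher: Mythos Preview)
Your proof is correct. All four inequalities are established cleanly, and you handle the boundary case $a=0$ for the upper bounds appropriately by noting the right-hand side becomes $+\infty$.

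The paper's proof takes a different, more unified route: it writes
\[
\sqrt{a+x^2}-\sqrt a=\int_0^x \frac{t}{\sqrt{a+t^2}}\,dt
\]
and then bounds the integrand pointwise on $[0,1]$ and $[1,x]$ separately. For the upper bound one uses $t/\sqrt{a+t^2}\leq t/\sqrt a$ on $[0,1]$ and $t/\sqrt{a+t^2}\leq 1$ on $[1,\infty)$; for the lower bound, $t/\sqrt{a+t^2}\geq t/\sqrt{a+1}$ on $[0,1]$ and $t/\sqrt{a+t^2}\geq 1/\sqrt{a+1}$ on $[1,\infty)$. Integrating these four pointwise bounds reproduces exactly the four claimed inequalities. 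Your arguments are essentially the differentiated versions of these: your $g'(x)\geq 0$ is the integrand inequality $t/\sqrt{a+t^2}\geq t/\sqrt{a+1}$, and your tangent-line step at $x_0=1$ encodes the constant lower bound $1/\sqrt{a+1}$ on the integrand for $t\geq 1$. The conjugate identity you use for the first upper bound is a shortcut that bypasses the integral entirely. The paper's approach has the advantage of deriving all four cases from a single device, whereas yours mixes the conjugate identity, monotonicity, and convexity; on the other hand, your convexity argument for the last case actually yields the slightly sharper constant $\sqrt{a+1}$ before relaxing to $\sqrt a+\tfrac{1}{2\sqrt{a+1}}$, which is a pleasant observation.
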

\begin{proof}
    Write
     \begin{align}
         \sqrt{a + x^2} = \int_0^x \frac{t}{\sqrt{a +  t^2}} dt + \sqrt{a} \notag.
    \end{align}
    Thus the first inequality can be obtained by utilizing $t/\sqrt{a + t^2} \leq t/\sqrt{a}$ and $t/\sqrt{a + t^2} \leq 1$ in the case of $0 \leq t \leq 1$ and $t \geq 1$ respectively. The second inequality follows by noting that $t/\sqrt{a + t^2} \geq t/\sqrt{a +1}$ when $ 0\leq t \leq 1 $ and $t/\sqrt{a + t^2} \geq 1 / \sqrt{a + 1}$ when $t \geq 1$.
\end{proof}
\begin{lemma}
\label{le: wipLowerBoundSharp}
    There exist independent random vectors $Z$ and $\varepsilon$ such that $P^Z, P^\varepsilon\in\mathcal{P}_2(\mathbb{R}^d)\cap\mathcal{P}_{\mathrm{ac}}(\mathbb{R}^d)$ such that $\wip{Z+\varepsilon}{U}^2 = \wip{Z}{U}^2+\wip{\varepsilon}{U}^2$.
\end{lemma}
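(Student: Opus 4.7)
The plan is to exhibit a concrete pair of centered Gaussian vectors that saturate the bound in Lemma~\ref{le: wipLowerBound}. Specifically, I would take $Z, \varepsilon \stackrel{\mathrm{i.i.d.}}{\sim} \mathcal{N}(0, I_d)$, which obviously lie in $\mathcal{P}_2(\mathbb{R}^d) \cap \mathcal{P}_{ac}(\mathbb{R}^d)$ and are independent of each other, and observe that $Z + \varepsilon \sim \mathcal{N}(0, 2 I_d)$.

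The key tool is the Gelbrich closed form for the 2-Wasserstein distance between centered Gaussians, namely $\mathcal{W}_2^2(\mathcal{N}(0, \Sigma_1), \mathcal{N}(0, \Sigma_2)) = \Tr(\Sigma_1) + \Tr(\Sigma_2) - 2\Tr((\Sigma_2^{1/2}\Sigma_1 \Sigma_2^{1/2})^{1/2})$, which was already invoked in~\eqref{Eq:EllipticalWIP} during the proof of Theorem~\ref{thm: Consistency}. Combining it with the $\wip{\cdot}{\cdot}$--$\mathcal{W}_2$ identity $\wip{P}{Q} = \frac{1}{2}\int \|x\|^2 \, dP(x) + \frac{1}{2}\int \|y\|^2 \, dQ(y) - \frac{1}{2}\mathcal{W}_2^2(P,Q)$ and specializing to $\Sigma_2 = I_d$ yields the clean identity $\wip{\mathcal{N}(0, \Sigma)}{\mathcal{N}(0, I_d)} = \Tr(\Sigma^{1/2})$ for any positive semidefinite $\Sigma$.

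Applying this formula to the marginals of $Z$, $\varepsilon$, and $Z + \varepsilon$ in turn, I obtain $\wip{Z}{U} = \wip{\varepsilon}{U} = \Tr(I_d) = d$ and $\wip{Z + \varepsilon}{U} = \Tr((2 I_d)^{1/2}) = d \sqrt{2}$. Squaring, $\wip{Z + \varepsilon}{U}^2 = 2 d^2 = d^2 + d^2 = \wip{Z}{U}^2 + \wip{\varepsilon}{U}^2$, so the lower bound in Lemma~\ref{le: wipLowerBound} is attained. There is no substantive obstacle: the entire verification is a direct computation with the explicit Gaussian formula that is already in the paper, and the construction in fact works for any pair $Z \sim \mathcal{N}(0, aI_d)$, $\varepsilon \sim \mathcal{N}(0, bI_d)$ with $a,b>0$.
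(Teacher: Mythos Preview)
Your proposal is correct and follows essentially the same approach as the paper: both take $Z$ and $\varepsilon$ to be independent centered isotropic Gaussians, use the Gelbrich formula (as in~\eqref{Eq:EllipticalWIP}) to compute $\wip{\mathcal{N}(0,\Sigma)}{U}=\Tr(\Sigma^{1/2})$, and then verify the Pythagorean identity directly. The paper states the slightly more general family $\Sigma=\sigma^2 I_d$, $\Gamma=\gamma^2 I_d$, which you also mention at the end of your proposal.
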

\begin{proof}
    Consider independent random vectors $Z\sim \mathcal{N}(0,\Sigma)$ and $\varepsilon\sim \mathcal{N}(0,\Gamma)$. By the same argument as in~\eqref{Eq:EllipticalWIP}, we have 
    \begin{align*}
        \wip{Z+\varepsilon}{U} &= \Tr((\Sigma+\Gamma)^{1/2})\\
        \wip{Z}{U} &= \Tr(\Sigma^{1/2})\\
        \wip{\varepsilon}{U} &= \Tr(\Gamma^{1/2}).
    \end{align*}
    Hence, the desired result hold if we take $\Sigma = \sigma^2 I_d$ and $\Gamma = \gamma^2 I_d$.
\end{proof}
\begin{proposition}\label{prop: swequivalent}
\label{Prop:SW}
    Let $X$ be a random vecotor. Then the following properties are equivalent:
    \begin{enumerate}[label=(\roman*)]
        \item There exists $\sigma > 0$ such that $\Prob(\|X\|\geq x) \leq 2 e^{-\frac{1}{2}(x/\sigma )^{\beta}}$ for all $x \geq 0$. \label{prop: sw1}
        \item There exists $K_{\sigma}>0$  such that $\{\mathbb{E}\|X\|^k\}^{1/k} \leq K_{\sigma} k^{1/\beta}$. \label{prop: sw2}
        \item There exists $K_\sigma^\prime>0$ such that $\E \exp{\bigl((\lambda \|X\|)^\beta\bigr)} \leq \exp{\bigl((\lambda K_\sigma^\prime)^{\beta}\bigr)}$ for all $|\lambda| \leq 1/K_\sigma^\prime. $  
        \item $X$ follows the $(\sigma, \beta)$-sub-weibull distribution. \label{prop: sw4} 
    \end{enumerate}
\end{proposition}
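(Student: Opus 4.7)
The plan is to prove the four characterizations equivalent by establishing the cycle of implications $(\mathrm{iv}) \Rightarrow (\mathrm{i}) \Rightarrow (\mathrm{ii}) \Rightarrow (\mathrm{iii}) \Rightarrow (\mathrm{iv})$. Since (ii), (iii) and (iv) only claim the existence of constants with the stated properties, I do not need the constants $\sigma$, $K_\sigma$, $K_\sigma'$ to match across implications; rather, I will track how each scale parameter transforms under the implication, verifying that it depends only on the preceding scale and on $\beta$.

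For $(\mathrm{iv}) \Rightarrow (\mathrm{i})$, Markov's inequality applied to the nonnegative random variable $\exp\bigl\{\tfrac{1}{2}(\|X\|/\sigma)^\beta\bigr\}$ gives
\[
\Prob(\|X\| \geq x) \leq \frac{\E\exp\bigl\{\tfrac{1}{2}(\|X\|/\sigma)^\beta\bigr\}}{\exp\bigl\{\tfrac{1}{2}(x/\sigma)^\beta\bigr\}} \leq 2 e^{-\tfrac{1}{2}(x/\sigma)^\beta},
\]
with the same $\sigma$. For $(\mathrm{i}) \Rightarrow (\mathrm{ii})$, use the layer-cake representation
\[
\E\|X\|^k = k\int_0^\infty t^{k-1}\Prob(\|X\|\geq t)\, dt \leq 2k \int_0^\infty t^{k-1} e^{-\tfrac{1}{2}(t/\sigma)^\beta} dt,
\]
then substitute $u = \tfrac{1}{2}(t/\sigma)^\beta$ to recognize the integral as $\tfrac{2\sigma^k 2^{k/\beta}}{1}\,\Gamma(k/\beta + 1)$. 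Applying Stirling's approximation $\Gamma(k/\beta + 1)^{1/k} \leq C_\beta \,k^{1/\beta}$ yields $(\E\|X\|^k)^{1/k} \leq K_\sigma k^{1/\beta}$ with $K_\sigma$ proportional to $\sigma$ with a factor depending only on $\beta$.

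For $(\mathrm{ii}) \Rightarrow (\mathrm{iii})$, Taylor expand the exponential:
\[
\E\exp\bigl((\lambda\|X\|)^\beta\bigr) = \sum_{k=0}^\infty \frac{\lambda^{k\beta}}{k!}\,\E\|X\|^{k\beta}.
\]
By (ii), $\E\|X\|^{k\beta} \leq (K_\sigma)^{k\beta} (k\beta)^{k}$, and comparing against $k! \geq (k/e)^k$ via Stirling shows the series is bounded by a geometric series in $(eK_\sigma^\beta \beta\lambda^\beta)$. Hence, for $|\lambda|$ smaller than a threshold of order $K_\sigma^{-1}$, the bound $\E\exp((\lambda\|X\|)^\beta) \leq \exp((\lambda K_\sigma')^\beta)$ holds for a suitable $K_\sigma'$ proportional to $K_\sigma$; the precise constant emerges from summing the geometric series and inverting the Taylor expansion of $\exp((\lambda K_\sigma')^\beta)$. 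For $(\mathrm{iii}) \Rightarrow (\mathrm{iv})$, set $\lambda = 1/K_\sigma'$ to obtain $\E\exp\bigl\{(\|X\|/K_\sigma')^\beta\bigr\} \leq e$, then rescale: for $\sigma_{\mathrm{new}} := 2^{1/\beta} K_\sigma'$, Jensen's inequality (applied to the concave function $x \mapsto x^{1/2}$) gives
\[
\E\exp\bigl\{\tfrac{1}{2}(\|X\|/\sigma_{\mathrm{new}})^\beta\bigr\} = \E\exp\bigl\{(\|X\|/K_\sigma')^\beta\bigr\}^{1/2} \leq \sqrt{e} \leq 2,
\]
closing the cycle.

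The main obstacle is the bookkeeping in the Taylor-series step $(\mathrm{ii}) \Rightarrow (\mathrm{iii})$, since the exponents $k\beta$ are non-integer for general $\beta\in(0,2]$ and the series' radius of convergence needs to be tracked to yield the prescribed condition $|\lambda| \leq 1/K_\sigma'$. The remaining implications are routine once the constant dependence on $\beta$ is made explicit; I would package the bookkeeping by introducing a single constant $c_\beta$ that absorbs all $\beta$-dependent factors and track its propagation through the cycle.
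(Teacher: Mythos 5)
Your cycle $(\mathrm{iv})\Rightarrow(\mathrm{i})\Rightarrow(\mathrm{ii})\Rightarrow(\mathrm{iii})\Rightarrow(\mathrm{iv})$ is essentially the standard argument, and it is sound. The paper itself does not prove this proposition; it cites \citet[Theorem 2.1]{vladimirova2020sub}, which establishes the same equivalences by essentially the same route (Markov, layer-cake with Gamma-function asymptotics, Taylor expansion of the exponential), so your proof is a reconstruction of what the cited result does rather than a divergent approach.

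Two small points worth tidying up. First, in the closing step $(\mathrm{iii})\Rightarrow(\mathrm{iv})$, your choice $\sigma_{\mathrm{new}} = 2^{1/\beta}K_\sigma^\prime$ actually gives $\tfrac{1}{2}(\|X\|/\sigma_{\mathrm{new}})^\beta = \tfrac{1}{4}(\|X\|/K_\sigma^\prime)^\beta$, so the displayed equality should have power $1/4$ rather than $1/2$; either take $\sigma_{\mathrm{new}} = K_\sigma^\prime$ to get power $1/2$ and bound $\sqrt{e}\leq 2$, or keep $2^{1/\beta}$ and bound $e^{1/4}\leq 2$ — the conclusion holds either way, but as written the equality is off by a factor in the exponent. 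Second, for $(\mathrm{ii})\Rightarrow(\mathrm{iii})$ you correctly flag that the exponents $k\beta$ are generally non-integer: since (ii) is stated only for integer $k$, you need either to extend (ii) to all real orders $\geq 1$ (which the layer-cake derivation in $(\mathrm{i})\Rightarrow(\mathrm{ii})$ does produce, so this is harmless within the cycle) or to interpolate via Lyapunov/H\"older, $\E\|X\|^{k\beta} \leq (\E\|X\|^{\lceil k\beta\rceil})^{k\beta/\lceil k\beta\rceil}$. Spelling out whichever of these you use would close the only real bookkeeping gap.
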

The proof follows by \citet[Theorem 2.1]{vladimirova2020sub}. 

\begin{proposition}\label{prop: swsum}
        For $p_1, p_2 \in \mathbb{N}$, let $X \in \mathbb{R}^{p_1}, Y \in \mathbb{R}^{p_2}$ be two independent sub-Weibull random vectors with parameter $(\sigma_1, \alpha)$ and $(\sigma_2, \beta)$ respectively. Then the following statements holds:
        \begin{enumerate}[label = (\roman*)]
            \item For matrices $A \in \mathbb{R}^{d \times {p_1}}$ and $B \in \mathbb{R}^{d \times {p_2}}$, there exists $\sigma>0$ depending only on $\sigma_1$, $\sigma_2$, $\|A\|_{\ope}$, $\|B\|_{\ope}$ such that $AX + BY \sim \sw{\sigma}{\alpha \wedge \beta}$. 
            \item There exists  $\sigma>0$ depending only on $\sigma_1, \sigma_2$ such that the concatenation of two random vectors $Z:= (X, Y) \in \mathbb{R}^{p_1 + p_2}$ is a sub-Weibull random vector with parameter $( \sigma, \alpha \wedge \beta)$.
        \end{enumerate}
\end{proposition}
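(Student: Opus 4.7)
The plan is to exploit the equivalence between sub-Weibullness and polynomial moment growth provided by Proposition~\ref{prop: swequivalent}\ref{prop: sw2}. Specifically, the forward direction of that equivalence yields constants $K_1, K_2 > 0$, depending only on $\sigma_1, \sigma_2$ respectively, such that $\{\E\|X\|^k\}^{1/k} \leq K_1 k^{1/\alpha}$ and $\{\E\|Y\|^k\}^{1/k} \leq K_2 k^{1/\beta}$ for every $k \in \mathbb{N}$.

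For part~(i), I would first combine the triangle inequality with the definition of the operator norm to bound $\|AX + BY\| \leq \|A\|_{\ope}\|X\| + \|B\|_{\ope}\|Y\|$, and then apply Minkowski's inequality in $L^k$ to obtain
\[
\{\E\|AX + BY\|^k\}^{1/k} \leq \|A\|_{\ope} K_1 k^{1/\alpha} + \|B\|_{\ope} K_2 k^{1/\beta}.
\]
Since $k \geq 1$ and $\alpha \wedge \beta \leq \alpha, \beta$, the elementary bound $k^{1/\alpha} \vee k^{1/\beta} \leq k^{1/(\alpha\wedge\beta)}$ then yields
\[
\{\E\|AX + BY\|^k\}^{1/k} \leq \bigl(\|A\|_{\ope} K_1 + \|B\|_{\ope} K_2\bigr)\, k^{1/(\alpha\wedge\beta)}.
\]
Invoking the reverse direction of Proposition~\ref{prop: swequivalent} then identifies $AX + BY$ as a $\sw{\sigma}{\alpha \wedge \beta}$ random vector for some $\sigma > 0$ depending only on $\sigma_1, \sigma_2, \|A\|_{\ope}$ and $\|B\|_{\ope}$, as required.

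For part~(ii), I would use the elementary inequality $\|Z\| = \sqrt{\|X\|^2 + \|Y\|^2} \leq \|X\| + \|Y\|$ and re-run the Minkowski argument from part~(i) (with the operator-norm factors replaced by $1$) to obtain $\{\E\|Z\|^k\}^{1/k} \leq (K_1 + K_2)\, k^{1/(\alpha\wedge\beta)}$ for all $k \in \mathbb{N}$. Proposition~\ref{prop: swequivalent} then furnishes a sub-Weibull parameter $\sigma$ depending only on $\sigma_1, \sigma_2$, completing the proof. Alternatively, part~(ii) can be viewed as a direct corollary of part~(i) by writing $Z$ as $\widetilde A X + \widetilde B Y$ with the zero-padded embedding matrices $\widetilde A = (I_{p_1}, 0_{p_1\times p_2})^\top$ and $\widetilde B = (0_{p_2 \times p_1}, I_{p_2})^\top$, both of operator norm one.

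No substantive obstacle is anticipated: the argument is routine bookkeeping through the tail/moment/MGF equivalence. The only point requiring mild care is tracking how the implicit constants propagate through the forward and reverse directions of Proposition~\ref{prop: swequivalent}, in order to confirm the claimed dependence of the final parameter $\sigma$ on $\sigma_1, \sigma_2$ (and on $\|A\|_{\ope}, \|B\|_{\ope}$ in part~(i)) alone, without any hidden dependence on the ambient dimensions $p_1, p_2, d$.
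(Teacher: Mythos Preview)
Your proposal is correct and follows essentially the same approach as the paper: both arguments invoke the moment characterization of sub-Weibullness from Proposition~\ref{prop: swequivalent}\ref{prop: sw2}, apply Minkowski's inequality together with the operator-norm bound (for part~(i)) or the Euclidean concatenation bound $\|(X,Y)\|\leq \|X\|+\|Y\|$ (for part~(ii)), and then use $k^{1/\alpha}\vee k^{1/\beta}\leq k^{1/(\alpha\wedge\beta)}$. The only cosmetic difference is that the paper packages the final constant as $2(\|A\|_{\ope}\vee\|B\|_{\ope})(K_{\sigma_1}\vee K_{\sigma_2})$ whereas you write $\|A\|_{\ope}K_1+\|B\|_{\ope}K_2$; your alternative derivation of~(ii) as a special case of~(i) via zero-padded embeddings is a nice observation not in the paper.
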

\begin{proof}
    (i) Suppose $K_{\sigma_1}$ and $K_{\sigma_2}$ are the induced constants of $X$ and $Y$ by the part \ref{prop: sw2} of Proposition \ref{prop: swequivalent}. Then it follows that 
    \begin{align*}
        \bigl(\E\|AX + BY\|^k\bigr)^{1/k} &\leq (\E\|AX\|^k)^{1/k} + (\E\|BY\|^k)^{1/k} \\
        &\leq \|A\|_{\ope}(\E\|X\|^k)^{1/k} + \|B\|_{\ope}(\E\|Y\|^k)^{1/k} \notag \\
        & \leq \|A\|_{\ope}\vee \|B\|_{\ope} \cdot \bigl(K_{\sigma_1}k^{1/\alpha} + K_{\sigma_2}k^{1/\beta}\bigr) \notag \\
        & \leq 2( \|A\|_{\ope}\vee \|B\|_{\ope}) \cdot (K_{\sigma_1} \vee K_{\sigma_2}) k^{1/(\alpha \wedge \beta)}. \notag 
    \end{align*}
    This proves that $AX + BY$ satisfies part \ref{prop: sw2} in the Proposition \ref{prop: swequivalent} thus the conclusion follows by the equivalence of part $\ref{prop: sw2}$ and $\ref{prop: sw4}$. 

    (ii) For any integer $k \geq 1$, we have
    \begin{align*}
        \bigl(\E\|(X, Y)\|^k\bigr)^{1/k} &\leq \bigl(\E(\|X\| + \|Y\|)^k\bigr)^{1/k} \\
        &\leq \bigl(\E\|X\|^k \bigr)^{1/k}+ \bigl(\E\|Y\|^k\bigr)^{1/k}\leq (K_{\sigma_1} \vee K_{\sigma_2})k^{1/(\alpha \wedge \beta)}\notag,
    \end{align*}
    where the sub-Weibull assumption on $X$ and $Y$ have been exploited. The conclusion follows by employing Proposition \ref{prop: swequivalent}.
\end{proof}

\begin{lemma}
\label{Lem:TruncatedMoment}
    If $X$ is a $(\sigma,\beta)$-sub-Weibull random vector as defined in~\eqref{def: sub-weibull}, then for any $s>0$, there exists $C> 0$, depending on $s, \sigma, \beta$, such that $\mathbb{E}\bigl(\|X\|^s\one\{\|X\| \geq t\}\bigr) \leq Ce^{-\frac{1}{4}(t/\sigma)^\beta}$.
\end{lemma}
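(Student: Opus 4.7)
The plan is to reduce the truncated moment to an integral of the tail probability, apply the sub-Weibull tail bound from Proposition~\ref{prop: swequivalent}\ref{prop: sw1}, and then peel off the factor $e^{-\frac{1}{4}(t/\sigma)^\beta}$ while absorbing the remaining integral into the constant.

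First, by the layer-cake / integration by parts formula, I would write
\[
\mathbb{E}\bigl(\|X\|^s\one\{\|X\|\geq t\}\bigr) = t^s\,\Prob(\|X\|\geq t) + \int_t^\infty s u^{s-1}\Prob(\|X\|\geq u)\,du,
\]
and then invoke Proposition~\ref{prop: swequivalent}\ref{prop: sw1} to bound both tail probabilities by $2e^{-\frac{1}{2}(u/\sigma)^\beta}$.

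The key algebraic trick is the elementary observation that for $u \geq t \geq 0$,
\[
\tfrac{1}{2}(u/\sigma)^\beta \geq \tfrac{1}{4}(t/\sigma)^\beta + \tfrac{1}{4}(u/\sigma)^\beta,
\]
so $e^{-\frac{1}{2}(u/\sigma)^\beta} \leq e^{-\frac{1}{4}(t/\sigma)^\beta}\,e^{-\frac{1}{4}(u/\sigma)^\beta}$. Applying this inside the integral factors out the desired $e^{-\frac{1}{4}(t/\sigma)^\beta}$:
\[
\int_t^\infty s u^{s-1}\cdot 2 e^{-\frac{1}{2}(u/\sigma)^\beta}\,du \leq 2s\,e^{-\frac{1}{4}(t/\sigma)^\beta}\int_0^\infty u^{s-1} e^{-\frac{1}{4}(u/\sigma)^\beta}\,du.
\]
For the boundary term, I would use the trivial bound $t^s e^{-\frac{1}{4}(t/\sigma)^\beta} \leq \sup_{r\geq 0} r^s e^{-\frac{1}{4}(r/\sigma)^\beta} =: M_1 <\infty$ (finite since $s>0$ and the exponential dominates for large $r$), so that $t^s\cdot 2 e^{-\frac{1}{2}(t/\sigma)^\beta} \leq 2 M_1 e^{-\frac{1}{4}(t/\sigma)^\beta}$.

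Finally, I would evaluate the remaining integral via the substitution $v = \tfrac{1}{4}(u/\sigma)^\beta$, which gives $\int_0^\infty u^{s-1} e^{-\frac{1}{4}(u/\sigma)^\beta}\,du = \sigma^s 4^{s/\beta} \beta^{-1}\Gamma(s/\beta)$, a finite constant depending only on $s,\sigma,\beta$. Collecting the two pieces produces the desired constant $C$. There is no genuine obstacle here; the only mildly delicate point is the inequality $\tfrac{1}{2}(u/\sigma)^\beta \geq \tfrac{1}{4}(t/\sigma)^\beta + \tfrac{1}{4}(u/\sigma)^\beta$ for $u\geq t$, which works for every $\beta\in(0,2]$ without requiring convexity/concavity of $r\mapsto r^\beta$.
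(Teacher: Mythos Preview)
Your proof is correct. Both the layer-cake identity and the splitting inequality $\tfrac{1}{2}(u/\sigma)^\beta \geq \tfrac{1}{4}(t/\sigma)^\beta + \tfrac{1}{4}(u/\sigma)^\beta$ for $u\geq t$ are valid, and the Gamma-function evaluation is accurate.

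The paper takes a different and somewhat shorter route: it bounds the indicator directly via the exponential Markov trick $\one\{\|X\|\geq t\} \leq e^{\frac{1}{4}(\|X\|/\sigma)^\beta - \frac{1}{4}(t/\sigma)^\beta}$, then applies Cauchy--Schwarz to separate $\|X\|^s$ from $e^{\frac{1}{4}(\|X\|/\sigma)^\beta}$, and finally invokes the sub-Weibull moment-generating condition $\E e^{\frac{1}{2}(\|X\|/\sigma)^\beta}\leq 2$ directly. Your approach instead passes through the tail-probability characterization and integrates; it is slightly longer but more elementary (no Cauchy--Schwarz) and yields an explicit constant $C = 2M_1 + 2s\,\sigma^s 4^{s/\beta}\beta^{-1}\Gamma(s/\beta)$. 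Both arguments hinge on the same underlying idea of peeling off half of the exponent, just executed at different stages.
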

\begin{proof}
    We have 
    \begin{align*}
        \mathbb{E}\bigl(\|X\|^s\one\{\|X\| \geq t\}\bigr) & = \mathbb{E} \Bigl[\|X\|^s \one\Bigl\{e^{\frac{1}{4}(\|X\|/\sigma)^\beta} \geq e^{\frac{1}{4}(t/\sigma)^\beta}\Bigr\}\Bigr] \\
        &\leq \E\bigl\{\|X\|^s e^{\frac{1}{4}(\|X\|/\sigma)^\beta } e^{-\frac{1}{4} (t/\sigma)^{\beta}} \bigr\} \\
        &\leq e^{-\frac{1}{4} (t/\sigma)^{\beta}} \bigl\{\E\|X\|^{2s}\bigr\}^{1/2} \Bigl\{\E e^{\frac{1}{2}(\|X\|/\sigma)^\beta } \Bigr\}^{1/2} \\
        &\leq 2^{1/2}e^{-\frac{1}{4}(t/\sigma)^{\beta}} \bigl\{\E\|X\|^{2s}\bigr\}^{1/2},
    \end{align*}
    where we used the definition of $X$ being $(\sigma,\beta)$-sub-Weibull in final step. The desired bound follows since  by Proposition~\ref{Prop:SW}, we have $\mathbb{E}\|X\|^{2s} \leq C$ for some constant $C$ that depends on $s, \sigma, \beta$. 
\end{proof}

\begin{lemma}\label{le: AnticoncentrationforSumofTwoRVs}
    Suppose $X, Y$ are independent $d$-dimensional random vectors with finite second moment. If $X$ follows an absolutely continuous distribution with density function $f_X$ which admits the following anti-concentration inequality for some constant $\gamma_1, \gamma_2 >0$:
    \[
    f_X(x) \geq \gamma_1 \exp\bigl(-\gamma_2 \|x\|^2\bigr), \quad \forall \ \|x\| \geq \E\|Y\|^2.
    \]
    Then there exists a constant $K_1$ depends on $\gamma_1$ and $\gamma_2$ such that the density function of $V := X+ Y$, write as $f_V$, satisfying 
    \begin{align}
        f_V(v) \geq K_1 \exp{(-2\gamma_2\|v\|^2)}, \quad \forall \ \|v\| \geq 2 \E\|Y\|^2. \notag 
    \end{align}
\end{lemma}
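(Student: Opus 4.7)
The plan is to use the convolution representation
\[
f_V(v) = \int f_X(v - y)\, dP^Y(y)
\]
and lower bound this integral by restricting to an event on which $\|Y\|$ is controlled, then invoking the anti-concentration hypothesis on $f_X$ pointwise on that event. Set $\mu := \E\|Y\|^2$, and choose a radius $R$ such that the event $A := \{y : \|y\| \leq R\}$ has $P^Y(A)$ bounded away from zero; a natural choice is $R = \sqrt{2\mu}$, since Markov's inequality applied to $\|Y\|^2$ immediately gives $P^Y(A) \geq 1/2$. One can then restrict the convolution integral to $A$ without losing more than a factor of $1/2$.

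Next, I would check that the anti-concentration hypothesis applies to the integrand on $A$. For $\|v\| \geq 2\mu$ and $\|y\| \leq R$, the triangle inequality yields $\|v-y\| \geq \|v\| - R$, and the plan is to verify (with an appropriate calibration of $R$ relative to $\mu$) that this quantity exceeds the anti-concentration threshold $\mu$. Granted this, we obtain $f_X(v-y) \geq \gamma_1 e^{-\gamma_2\|v-y\|^2}$ uniformly over $y\in A$. The elementary bound
\[
\|v - y\|^2 \leq 2\|v\|^2 + 2\|y\|^2 \leq 2\|v\|^2 + 2R^2
\]
then separates the $v$- and $y$-dependences, giving $f_X(v-y) \geq \gamma_1 e^{-2\gamma_2 R^2}\, e^{-2\gamma_2\|v\|^2}$ on $A$.

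Integrating over $A$, this yields
\[
f_V(v) \geq \tfrac{1}{2}\gamma_1\, e^{-2\gamma_2 R^2}\, e^{-2\gamma_2\|v\|^2},
\]
and one sets $K_1 := \tfrac{1}{2}\gamma_1 e^{-2\gamma_2 R^2}$. In the intended application $\mu \leq 1$, so $R$ is an absolute constant and $K_1$ depends only on $\gamma_1$ and $\gamma_2$.

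The main obstacle is the constant-chasing in the triangle inequality step: one must align the threshold $\|v\| \geq 2\mu$ with the anti-concentration cutoff $\|x\| \geq \mu$ through the choice of $R$. This is essentially the only subtle point; once the radii are calibrated so that $\|v\| - R \geq \mu$ and $P^Y(\|Y\|\leq R)$ is bounded below, the rest of the proof is a direct chain of inequalities, and all remaining multiplicative factors can be absorbed into $K_1$.
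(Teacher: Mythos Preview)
Your approach is exactly the paper's: write $f_V$ as a convolution, restrict to $\{\|y\|\le R\}$, invoke the anti-concentration of $f_X$ pointwise on that event, bound $\|v-y\|^2\le 2\|v\|^2+2R^2$, and use Markov to lower bound $P^Y(\|Y\|\le R)$. The only difference is the choice of $R$.

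The paper resolves your flagged ``main obstacle'' by taking $R=\mu$ rather than $\sqrt{2\mu}$. With this choice the triangle-inequality step is exact: for $\|v\|\ge 2\mu$ and $\|y\|\le\mu$ one has $\|v-y\|\ge\mu$, so the anti-concentration hypothesis applies with no further calibration. The Markov step then reads $P(\|Y\|>\mu)\le \E\|Y\|^2/\mu^2=1/\mu$, and the paper sets $K_1=\gamma_1 e^{-2\gamma_2\mu^2}(1-1/\mu)$. Note this is only positive when $\mu>1$; your choice $R=\sqrt{2\mu}$ has the opposite trade-off (Markov always gives $1/2$, but $\|v\|-R\ge\mu$ fails for $\mu<2$). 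In either version the resulting $K_1$ depends on $\mu$, not only on $\gamma_1,\gamma_2$ as the lemma asserts; this is harmless in the paper's sole application, where the relevant second moment is at most $1$ and the anti-concentration threshold for $\varepsilon$ is the fixed constant $1$ from Assumption~3 rather than $\mu$.
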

\begin{proof}
    Write $M_2:= \E \|Y\|^2 < +\infty$. For all $\|v\| \geq 2 M_2$, we have
    \begin{align}
        f_V(v) &= \int f_X(v - y)f_Y(y) d y \geq \int_{\|y\|  \leq M_2} \gamma_1 \exp{\bigl(-\gamma_2\|v-y\|^2\bigr)} f_Y(y) d y \notag \\
        &\geq  \int_{\|y\| \leq M_2} \gamma_1^\prime \exp{\bigl(- 2\gamma_2\|v\|^2 \bigr)} f_Y(y) dy  \geq  \gamma_1^\prime\bigl(1- \frac{1}{M_2}\bigr) \exp{\bigl(- 2\gamma_2\|v\|^2 \bigr)} \notag, 
    \end{align}
    where $\gamma_1^\prime = \gamma_1\exp(-2\gamma_2M_2^2)$ and the last inequality is followed by the Markov inequality. Thus the result holds by letting $K_1 =  \gamma_1^\prime\bigl(1- \frac{1}{M_2}\bigr)$. 
\end{proof}

\begin{lemma}\label{le: ConjugateUpperBound}
    Let $\mathcal{X}, \mathcal{Y} \subseteq \mathbb{R}^d$ are Borel sets such that $L_2$ is bounded on $\mathcal{X} \times \mathcal{Y}$, i.e. $\|L_2\|_{\infty}:=\sup_{(x, y )\in \mathcal{X}\times \mathcal{Y}}L_2(x, y) < +\infty$. Then for any $\mu \in \mathcal{P}_2(\mathcal{X})$ and $\nu \in \mathcal{P}_2(\mathcal{Y})$ we have 
    \begin{align}
        \inf&\big\{J_{\mu, \nu}(\varphi, \psi): (\varphi, \psi) \in \tilde{\Phi}\}\notag \\
        &= \inf\big\{J_{\mu, \nu}(\varphi, \psi): (\varphi, \psi) \in \tilde{\Phi},  - \|L_2\|_{\infty} \leq\varphi - \|\cdot\|^2/2 \leq 0, \ 0 \leq \psi - \|\cdot\|^2/2 \leq  \|L_2\|_{\infty} \}, \notag 
    \end{align}
    where $\tilde{\Phi}:= \{(\varphi, \psi)\in L^1(\mathcal{X}) \times L^1(\mathcal{Y}): \varphi(x) + \psi(y) \geq x^T y, \ \forall (x, y)\in \mathcal{X} \times \mathcal{Y}\}$.
\end{lemma}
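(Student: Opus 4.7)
My plan is to recast the problem into the standard Kantorovich dual form for the cost $L_2$, where uniform $L^\infty$-bounds on the dual potentials are easier to exhibit. Via the substitution $\hat\varphi(x):=\|x\|^2/2-\varphi(x)$ and $\hat\psi(y):=\|y\|^2/2-\psi(y)$, the constraint $(\varphi,\psi)\in\tilde{\Phi}$ becomes $\hat\varphi(x)+\hat\psi(y)\le L_2(x,y)$ for all $(x,y)\in\mathcal{X}\times\mathcal{Y}$, while
\[
J_{\mu,\nu}(\varphi,\psi) = \tfrac{1}{2}\int\|x\|^2\,d\mu(x)+\tfrac{1}{2}\int\|y\|^2\,d\nu(y)-\int\hat\varphi\,d\mu-\int\hat\psi\,d\nu.
\]
Hence minimizing $J_{\mu,\nu}$ over $\tilde{\Phi}$ is equivalent to the usual Kantorovich dual maximization, and the sought-after bounds translate to $\hat\varphi\in[0,\|L_2\|_\infty]$ and $\hat\psi\in[-\|L_2\|_\infty,0]$. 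The inequality ``$\ge$'' between the two infima in the lemma is trivial since the restricted set is contained in $\tilde{\Phi}$, so I need to prove that every feasible pair admits a bounded-potential replacement with dual objective at least as large.

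Given feasible $(\hat\varphi,\hat\psi)$, I first apply the $L_2$-conjugation twice: set $\hat\psi^{(1)}(y):=\inf_{x\in\mathcal{X}}\{L_2(x,y)-\hat\varphi(x)\}$ and then $\hat\varphi^{(1)}(x):=\inf_{y\in\mathcal{Y}}\{L_2(x,y)-\hat\psi^{(1)}(y)\}$. Both dominate their predecessors pointwise and $(\hat\varphi^{(1)},\hat\psi^{(1)})$ remains feasible, so the dual objective only increases. The key structural property I will then establish is the oscillation bound $\sup_y\hat\psi^{(1)}(y)-\inf_y\hat\psi^{(1)}(y)\le\|L_2\|_\infty$, proved via a standard $\epsilon$-argument: for any $y_1,y_2\in\mathcal{Y}$ and an $\epsilon$-minimizer $x^*$ for the infimum defining $\hat\psi^{(1)}(y_2)$,
\[
\hat\psi^{(1)}(y_1)-\hat\psi^{(1)}(y_2)\le L_2(x^*,y_1)-L_2(x^*,y_2)+\epsilon\le\|L_2\|_\infty+\epsilon,
\]
where I used $L_2\ge 0$ in the final step. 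Symmetry then yields the two-sided oscillation bound, and the inequality $\hat\varphi^{(1)}(x)\ge -\sup_y\hat\psi^{(1)}(y)$ follows directly from $\hat\varphi^{(1)}(x)=\inf_y\{L_2(x,y)-\hat\psi^{(1)}(y)\}$ and $L_2\ge 0$.

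With these ingredients, I set $c:=\sup_y\hat\psi^{(1)}(y)$, which is finite since $\hat\psi^{(1)}(y)\le L_2(x_0,y)-\hat\varphi(x_0)\le\|L_2\|_\infty-\hat\varphi(x_0)$ for any $x_0$ at which $\hat\varphi$ is finite, and define $\hat\varphi^{(2)}:=\hat\varphi^{(1)}+c$ and $\hat\psi^{(2)}:=\hat\psi^{(1)}-c$. This shift preserves both feasibility and the dual objective, and all four desired bounds fall out immediately: $\hat\psi^{(2)}\le 0$ by construction; $\hat\psi^{(2)}\ge-\|L_2\|_\infty$ by the oscillation bound; $\hat\varphi^{(2)}\ge 0$ from $\hat\varphi^{(1)}\ge-c$; and $\hat\varphi^{(2)}\le\|L_2\|_\infty$ by taking $y_\epsilon$ with $\hat\psi^{(2)}(y_\epsilon)>-\epsilon$ and applying feasibility $\hat\varphi^{(2)}(x)+\hat\psi^{(2)}(y_\epsilon)\le L_2(x,y_\epsilon)\le\|L_2\|_\infty$ before sending $\epsilon\to 0$. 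Translating back via $\varphi^{(2)}:=\|\cdot\|^2/2-\hat\varphi^{(2)}$ and $\psi^{(2)}:=\|\cdot\|^2/2-\hat\psi^{(2)}$ yields the desired replacement pair, proving the ``$\le$'' direction. The main obstacle is the oscillation bound: it uses only the boundedness and non-negativity of $L_2$, yet must be combined carefully with the shift to pin down all four inequalities simultaneously. Measurability and $L^1$-integrability of the conjugates are routine, as they are infima of functions continuous in one variable and uniformly bounded after the shift.
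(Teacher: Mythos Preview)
Your proof is correct and follows the same overall strategy as the paper: transform the problem via $\hat\varphi=\|\cdot\|^2/2-\varphi$, $\hat\psi=\|\cdot\|^2/2-\psi$ to the standard Kantorovich dual for the cost $L_2$, reduce to bounded potentials there, and transform back.

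The difference is in how the bounded-potential reduction is obtained. The paper invokes \citet[Remark~1.13]{villani2021topics} as a black box for the restriction $0\le\hat\varphi\le\|L_2\|_\infty$, $-\|L_2\|_\infty\le\hat\psi\le 0$, and then additionally appeals to \citet[Theorem~5.10]{villani2009optimal} to pick a $c$-concave optimizer $(\varphi_0,\varphi_0^c)$ before translating back. You instead unpack the content of Villani's remark directly: you apply the $L_2$-conjugation twice, prove the oscillation bound $\sup\hat\psi^{(1)}-\inf\hat\psi^{(1)}\le\|L_2\|_\infty$ from non-negativity and boundedness of $L_2$, and shift by a constant to pin down all four inequalities. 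Your route is more self-contained and does not rely on the existence of an optimal dual pair, since you show that \emph{every} feasible pair can be replaced by a bounded one without increasing $J_{\mu,\nu}$. The paper's route is shorter on the page but imports more machinery; yours makes the mechanism explicit and would go through verbatim for any bounded non-negative cost.
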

\begin{proof}
    Note that by the argument same as \eqref{opt: L2EquivalentForm} we have 
    \begin{align}
        \inf&\big\{J_{\mu, \nu}(\varphi, \psi): (\varphi, \psi) \in \tilde{\Phi}\} \notag \\
        &= \int_{\mathcal{X}} \frac{\|x\|^2}{2} d\mu(x) + \int_{\mathcal{Y}} \frac{\|y\|^2}{2} d\nu(y) - \sup\bigl\{J_{\mu, \nu}(\varphi, \psi): (\varphi, \psi) \in \Phi_2 \bigr\}, \label{eq: QuodraticForm} 
    \end{align}
    where $\Phi_2 := \{(\varphi, \psi)\in L^1(\mathcal{X}) \times L^1(\mathcal{Y}): \varphi(x) + \psi(y) \leq L_2(x, y), \ \forall (x, y)\in \mathbb{R}^d \times \mathbb{R}^d  \}$. Note by \citet[Remark 1.13]{villani2021topics}, we may restrict the supremum in the right-hand side of \eqref{eq: QuodraticForm} over some bounded functions:
    \begin{align}
         \sup&\bigl\{J_{\mu, \nu}(\varphi, \psi): (\varphi, \psi) \in \Phi_2 \bigr\}\notag \\
         &= \sup\bigl\{J_{\mu, \nu}(\varphi, \psi): (\varphi, \psi) \in \Phi_2, 0\leq \varphi \leq \|L_2\|_{\infty}, \, -\|L_2\|_{\infty} \leq \psi \leq 0 \bigr\} \label{opt: ReKantoDual}.
    \end{align}
    By \citet[Theorem 5.10]{villani2009optimal} we may further impose that $\varphi$ be c-concave and $\psi = \varphi^c$. Suppose $(\varphi_0, \varphi_0^c)$ be a solution to the right-hand side of \eqref{opt: ReKantoDual}. Define $\tilde{\varphi}:= \|\cdot\|^2/2 - \varphi_0$, $\tilde{\psi}:= \|\cdot\|^2/2 - \varphi_0^c$. Then by \eqref{eq: QuodraticForm} we have 
    \begin{align}
         \inf\big\{J_{\mu, \nu}(\varphi, \psi): (\varphi, \psi) \in \tilde{\Phi}\} = \int_{\mathcal{X}} \tilde{\varphi}(x) d \mu(x) + \int_{\mathcal{Y}} \tilde{\psi}(y) d\nu(y). \label{eq: Optimality} 
    \end{align}
    Moreover, note
    \begin{align}
        \tilde{\varphi}(x) = \|x\|^2/2 - \varphi_0(x) = \|x\|^2/2 - \inf_{y \in \mathcal{Y}}\{c(x, y ) - \varphi_0^c(y)\}= \sup_{y \in \mathcal{Y}}\{x^T y - (\|y\|^2/2 - \varphi_0^c(y))\} \notag, 
    \end{align}
    which implies that $(\tilde{\varphi}, \tilde{\psi}) \in \tilde{\Phi}$. Combine this with \eqref{eq: Optimality}, we proved that $(\tilde{\varphi}, \tilde{\psi})$ is an optimal solution to the left-hand side of \eqref{eq: QuodraticForm}. Finally, by the boundedness of $\varphi_0$ and $\varphi_0^c$, we have 
    \begin{align}
        0\leq\|x\|^2/2 - \tilde{\varphi}(x) \leq \|L_2\|_{\infty} \ \text{and} \ 
        -\|L_2\|_{\infty}\leq \|y\|^2/2 - \tilde{\psi}(y)\leq 0 \notag,
    \end{align}
     as desired. 
     \end{proof}

    \begin{theorem}\citep[Theorem 1]{fournier2015rate}\label{thm: WassRate}
    Let $X \sim P^X$ be a probability measure on $\mathbb{R}^d$ such that $M_\ell:= \E\|X\|^{\ell} < +\infty$ with $\ell \in (2, +\infty)$. If $P_n^X$ is the corresponding empirical distribution, then there exists a constant $C>0$ depending only on $d$ and $\ell$ such that for all $n \geq 1$,
    \begin{align}
        \E\left[\mathcal{W}_2^2(P^X, P_n^X)\right]\leq  C M_{\ell}^{2/\ell}\tau_n(d, \ell),
    \end{align}
    where 
    \begin{align}
    \tau_n(d, \ell):= \begin{cases}
    n^{-\frac{1}{2}} & \text { if $d < 4$} \\
    n^{-\frac{1}{2}} \log(1 + n) &\text{if $d = 4$} \\ 
    n^{-\frac{2}{d}} & \text{if $d>4$} 
    \end{cases} + \begin{cases}
    n^{-\frac{1}{d}} & \text{if $\ell >4$} \\ 
    n^{-\frac{1}{2}} \log (1 + n) & \text{if $\ell = 4$} \\ 
    n^{\frac{2 - \ell}{\ell}} &\text{if $2< \ell < 4$}.
    \end{cases} \notag 
    \end{align} 
\end{theorem}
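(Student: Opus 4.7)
The plan is to prove this classical Fournier--Guillin rate via a combination of truncation and a dyadic (multiscale) coupling argument. The overall strategy is: (i) split $\mathbb{R}^d$ into a large ball and its complement, controlling the tail via the moment assumption; (ii) on the ball, construct a hierarchical coupling between $P^X$ and $P_n^X$ by partitioning into nested dyadic sub-cubes and matching masses scale by scale; (iii) bound the expected transport cost at each scale using variance estimates for multinomial counts; (iv) sum the geometric series over scales and optimize the truncation radius and the finest scale.

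For the truncation step, I would fix a radius $R>0$ and write
\[
\mathcal{W}_2^2(P^X, P_n^X) \leq 2 \, \mathcal{W}_2^2(\mu_R, \nu_R) + \mathrm{Tail}(R),
\]
where $\mu_R, \nu_R$ are $P^X, P_n^X$ conditioned on $B_{0,R}^d$, and the tail term collects the mass outside the ball multiplied by the squared-norm contribution of the points that cross the boundary. Using H\"older's inequality with the moment $M_\ell$, one gets $\E\bigl(\int_{\|x\|>R} \|x\|^2 \, d P^X(x)\bigr) \leq M_\ell^{2/\ell}R^{-(\ell-2)/\ell\cdot\ell}\cdot\text{const}$; analogous bounds hold for the empirical tail. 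This reduces the problem to the bounded-support case on $[-R,R]^d$.

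For the multiscale coupling, I would dyadically partition $[-R,R]^d$ into $2^{jd}$ congruent sub-cubes $\{Q_{j,k}\}_k$ at each level $j=0,1,\ldots,J$. One constructs a coupling between $\mu_R$ and $\nu_R$ recursively: at level $j$, within each parent cube, transport mass to children cubes so that the marginals of $\mu_R$ and $\nu_R$ on children at the next level are matched; the remainder inside the finest cube is coupled arbitrarily. The key inequality is that the squared-$L_2$ cost of this hierarchical coupling is bounded by
\[
\mathcal{W}_2^2(\mu_R,\nu_R) \leq C \sum_{j=0}^J \bigl(R \, 2^{-j}\bigr)^2 \!\!\sum_{k} |\mu_R(Q_{j,k}) - \nu_R(Q_{j,k})| + C\bigl(R\, 2^{-J}\bigr)^2,
\]
since cubes at level $j$ have diameter $O(R\,2^{-j})$, plus a residual term from matching within the finest cubes. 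Taking expectations, the central estimate is $\E|P_n(Q) - P(Q)| \leq \sqrt{P(Q)/n}$ (by variance of a binomial), so Cauchy--Schwarz gives $\E\sum_k |\mu_R(Q_{j,k})-\nu_R(Q_{j,k})| \lesssim \sqrt{2^{jd}/n}$, yielding a level-$j$ contribution of order $R^2 \, n^{-1/2}\, 2^{j(d/2-2)}$.

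Summing over $j=0,\ldots,J$ produces three regimes depending on the sign of $d/2-2$: for $d<4$ the geometric series converges and gives $R^2 n^{-1/2}$; for $d=4$ one gets $R^2 n^{-1/2}\log n$ after choosing $J \asymp \log n$; for $d>4$ the last level dominates and balancing $2^{-2J}R^2$ against $R^2 n^{-1/2}2^{J(d/2-2)}$ gives $R^2 n^{-2/d}$ at the optimal $J$. Finally, I would balance this bounded-region bound against the truncation tail $M_\ell^{2/\ell}R^{-(\ell-2)}$ and optimize $R$. Different regimes of $\ell\in(2,4), \ell=4, \ell>4$ emerge from how the truncation cost scales against the in-region cost. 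The main obstacle is executing the hierarchical coupling cleanly so that the mass-matching constraints at every level are simultaneously satisfiable, and tracking the dependence on $M_\ell$ and the optimal $R$ carefully enough to obtain the exact two-term form of $\tau_n(d,\ell)$; the moment-based tail control when $\ell$ is only slightly above $2$ is the delicate part.
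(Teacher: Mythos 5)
This theorem is not proved in the paper at all: it is imported verbatim (with attribution) from Fournier and Guillin (2015), so there is no ``paper's proof'' to compare against other than the original source. Your sketch correctly identifies the overall shape of the argument used there (a recursive dyadic-cube coupling plus a variance estimate $\E|P_n(Q)-P(Q)|\leq\sqrt{P(Q)/n}$ plus Cauchy--Schwarz across cubes plus a balance against a moment-controlled tail), and your analysis of the three $d$-regimes of the bounded-support rate $\alpha_n(d) \in \{n^{-1/2},\, n^{-1/2}\log n,\, n^{-2/d}\}$ is correct.

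However, there is a genuine gap, and it is not the one you flag. The problem is that a \emph{single} truncation at radius $R$, as you set it up, is structurally incapable of producing the two-term rate $\tau_n(d,\ell)$. Once the mass-matching issue is dealt with (which can be done cleanly by replacing the tail of each measure by an atom at the origin, so that both conditioned measures become probability laws of the truncated variable), the bound you obtain is
\[
\E\mathcal{W}_2^2(P^X,P_n^X)\;\lesssim\; R^2\,\alpha_n(d)\;+\;M_\ell\,R^{2-\ell},
\]
and optimizing over $R$ yields $M_\ell^{2/\ell}\,\alpha_n(d)^{(\ell-2)/\ell}$. The exponent here is $\tfrac{2}{d}\cdot\tfrac{\ell-2}{\ell}$ when $d\geq5$, which, being the \emph{product} of two numbers in $(0,1)$, is strictly smaller than $\min\{2/d,\,(\ell-2)/\ell\}$, i.e.\ the resulting rate is strictly \emph{slower} than $\max\{n^{-2/d},\, n^{-(\ell-2)/\ell}\}\asymp\tau_n(d,\ell)$. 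For example, with $d=10$ and $\ell=3$, your argument gives $n^{-1/15}$ while the theorem asserts $n^{-1/5}$. The same deficiency holds for $d\leq 4$ (e.g.\ $d=\ell=3$ gives $n^{-1/6}$ instead of $n^{-1/3}$). So this is not a matter of ``tracking the optimal $R$ carefully'' --- the scheme itself loses a power of $n$ whenever $\ell<\infty$. To recover the sharp $\tau_n(d,\ell)$, Fournier and Guillin do not truncate at a single radius: they decompose $\mathbb{R}^d$ into dyadic annuli $B_{2^{m+1}}\setminus B_{2^m}$, apply the cube-partition lemma \emph{within each annulus} with a weight $2^{2m}$ (the squared annulus scale), use the moment condition $P(B_{2^m}^c)\lesssim M_\ell\,2^{-m\ell}$ to control the mass and the resolution seen in annulus $m$, and sum the contributions over $m$. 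It is this nonuniform, scale-dependent resolution across annuli that produces the additive second term $\beta_n(\ell)\in\{n^{-1/d},\,n^{-1/2}\log n,\,n^{-(\ell-2)/\ell}\}$, which a single truncation cannot reproduce.
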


\section{Spatial reference distribution}\label{apx: ref_distribution}

In this section, we derive the MCQR loss function under reference distribution $U[-1, 1]$, which may provide an intuitive example for the verification of Proposition \ref{prop: unique}. In one dimension, the traditional rank and quantile can be understood as a pair of optimal transport maps between the distribution of interest $X\sim P$ and the uniform distribution $U\sim U[0, 1]$. When $P$ does not assign mass to sets with Hausdorff dimension $0$, the corresponding distribution function $F$ and its inverse map $Q:=F^{-1}$ serve as the corresponding optimal transport map. This concept can be generalized to other reference distributions, for instance, $U[-1, 1]$. In this case, the spatial distribution function $F_{\mathrm{sp}}(\cdot):= 2F(\cdot) - 1$ takes on the role of $F$ in the previous case. Moreover, the corresponding check function needs to be modified as 
\begin{align}
    \rho_\tau^{\mathrm{sp}}(X - \theta):=   (1 + \tau)( X- \theta  ) - 2(X - \theta)\one\{X - \theta <0\} , \quad \forall \tau \in [-1, 1]  \notag. 
\end{align}
Suppose $V \sim U[-1, 1]$, then the composite quantile regression optimization becomes  
\begin{align}
    \E\int_{-1}^1\rho_\tau^{\mathrm{sp}}\bigl(Y - \beta^\top X &- q(\tau)\bigr) \cdot \frac{1}{2} \,d\tau  = \E\int_{-1}^1 \bigl(Y - \beta^\top X -q(\tau)\bigr)^{-}\, d\tau + \int_{-1}^1 \int_\tau^{-1} \frac{1}{2}q(\tau)\, d t d\tau  \notag \\
    &= \E \max_{t\in [-1, 1]} \int_t^1 -\bigl(Y-b X - q(\tau)\bigr)\,d\tau + \int_{-1}^1 \int_t^1 \frac{1}{2}q(\tau) \, d\tau dt  \notag \\
    &= \E \max_{t\in [-1, 1]}\bigl(-(1-t)(Y-b X) + \phi(t) \bigr) + \E\phi(V) \notag \\
    & = \E \max_{t\in [-1, 1]}\bigl(t(Y- b X) + \phi(t) \bigr) + \E\phi(V),\notag 
\end{align}
where $\phi(t) = \int_t^1 q(\tau) d\tau.$ Thus, applying the same argument as Lemma \ref{le: 1dmcqr} we can see that the composition quantile regression estimator of $b^*$ is once again
\begin{align}
    b^* = \argmin \wip{P^{Y-b X}}{P^V}. \notag 
\end{align}
This gives some intuition on Proposition \ref{prop: unique}. However, if choose the standard normal distribution as the reference distribution, we may not be able to find a straightforward optimal transport map as $F$ or $F_{\mathrm{sp}}$, but Proposition \ref{prop: unique} demonstrates the validity of this extension. 

\section{Spatial quantile}\label{Apx: SimulationDetails}
The concept of the spatial (or geometric) quantile was initially introduced by \cite{chaudhuri1996geometric}. Uniquely characterizing the underlying probability distribution as a special case of M-quantile, as demonstrated in \citep[Theorem 2.5]{koltchinskii1997m}, this quantile permits a seamless extension to the regression framework \citep{chakraborty2003multivariate} and functional quantile regression \citep{chakraborty2014spatial, chowdhury2019nonparametric}. A more recent development involves an extension to the hypersphere, as explored by \citet{konen2023spatial}. 

The definition of Spatial quantile starts from rewriting the check function $\rho_\tau(\cdot)$ as 
\begin{align}
    \rho_\tau(z) &= \frac{1}{2}\bigl(|z| + (2\tau - 1)z\bigr)  = \frac{1}{2}(|z| + vz) \notag, \ \text{for any $z \in \mathbb{R}$},
\end{align}
with $v = 2 \tau -1$. Thus a natural extension of the check function to the multi-dimensional case is by substituting the absolute value function by the $L_1$-loss function: 
\begin{align}
    \Phi_v(z) := \frac{1}{2}(\|z\| + v^\top z) \notag,
\end{align}
where $v = \tau u$, and $u \in \mathcal{S}^{d-1}$. This extension of the check function immediately leads to the following definition of spatial quantile:
\begin{definition}
    Suppose $Y\sim \Prob^Y$ is a random variable on $\mathbb{R}^d$ $(d\geq 1)$. Then for any $\tau \in [0, 1]$
    and $u \in \mathcal{S}^{d-1}$, the $\tau u$-spatial quantile of $P^Y$ is defined as
    \begin{align}
        \mathrm{Q}_{\tau u}= \argmin_{y \in \mathbb{R}^d}\E\Phi_{\tau u}(Y - y). \label{df: spatial} 
    \end{align}
\end{definition}
Note the solution of~\eqref{df: spatial} are such that 
\begin{align}
    \E\bigl(\frac{Y-Q_{\tau u}}{\|Y-Q_{\tau u}\|}\bigr) = - \tau u.  \notag 
\end{align}
Intuitively speaking, this indicates that $Q_{\tau u}$ defines a point in $\mathbb{R}^d$ such that the average unit vector from it to other random samples should be $\tau u$.

The generalization to quantile regression setting is simply by applying the spatial quantile definition to $Y-b^*X - a$, where $X \in \mathbb{R}^p$ is the covariate vector, $b^* \in \mathbb{R}^{d \times p}$ is the regression coefficient and $a$ is the intercept term. Specifically, for fixed $\tau \in [0, 1]$ and $u \in \mathcal{S}^{d-1}$, 
\begin{align}
    (a_{\tau u}, b_{\tau, u}) = \argmin_{b \in \mathbb{R}^{d \times p}, \ a \in \mathbb{R}^d}\E\Phi_{\tau u}(Y - bX -a). \notag 
\end{align}
Therefore, given observations $(Y_1, X_1), \ldots, (Y_n, X_n)$ satisfying equations
\[Y_i = b^*X_i + \varepsilon_i, \ i = 1, \ldots, n,\]
for some random residue terms $\varepsilon_i$'s that are independent with with $X_i$'s, the spatial quantile estimator of $b^*$ can be obtained by
\begin{align}
    (\hat b^{(\mathrm{sp})}, \hat a_{\tau u}^{(\mathrm{sp})})\argmin\limits_{b \in \mathbb{R}^{d \times p}, \ a \in \mathbb{R}^p}\frac{1}{n}\sum_{i = 1}^n \Phi_{\tau u}(Y_i -bX_i -a) \notag.
\end{align}
Therefore, the optimizer can be obtained by applying classical convex optimization algorithms. 

\printbibliography
\end{refsection}
\end{document}